\newtheorem{theorem}{Theorem}
\newtheorem{lemma}{Lemma}
\newtheorem{proposition}{Proposition}
\newtheorem{definition}{Definition}
\newcommand{\qb}{{\bf q}}
\newcommand{\pb}{{\bf p}}
\newcommand{\Zb}{{\bf Z}}
\newcommand{\Ub}{{\bf U}}
\newcommand{\Sb}{{\bf S}}
\newcommand{\Pb}{{\bf P}}
\newcommand{\Qb}{{\bf Q}}
\newcommand{\R}{\mathbb{R}}
\newcommand{\diag}{\mathop{\rm diag}\nolimits}
\DeclareMathOperator{\inte}{int}
\newcommand{\rc}[1]{{\color{red}#1}}
\title[Ejection-collision orbits in two degrees of freedom problems]{Ejection-collision orbits in two degrees of freedom problems in celestial mechanics}
\author[Alvarez-Ram\'{\i}rez]{M. Alvarez-Ram\'{\i}rez$^\clubsuit$ }
 \address{$\clubsuit$ Dept. de Matem\'aticas, UAM--Iztapalapa\\
   09340 Iztapalapa,  Ciudad de M\'exico,  M\'exico.  ORCID: 0000-0001-9187-1757}
\email{mar@xanum.uam.mx}
\author[Barrab\'es]{E. Barrab\'es$^\spadesuit$}
\address{$\spadesuit$ Dept. Inform\`atica Matem\`atica Aplicada i Estad\'{\i}stica\\
Universitat de Girona, Girona, Spain. ORCID: 0000-0002-8448-692X}
\email{esther.barrabes@udg.edu}
\author[Medina]{M. Medina$^\dagger$}
 \address{$\dagger$ Dept. de Matem\'aticas, UAM--Iztapalapa\\
   09340 Iztapalapa,  Ciudad de M\'exico,  M\'exico. ORCID: 0000-0003-0295-5145}
\email{mvmg@xanum.uam.mx}
\author[Oll\'e]{M. Oll\'e$^\heartsuit$}
 \address{$\heartsuit$ Dept. Matem\`atiques Universitat Polit\`ecnica de Catalunya \\Av Diagonal 647, Barcelona 08028, Spain. 
 ORCID: 0000-0002-8050-9055}
\email{Merce.Olle@upc.edu}
\date{}
\begin{document}
\maketitle

\begin{abstract}
In a general setting of a Hamiltonian system with two degrees of freedom  and 
assuming some properties for the undergoing potential, we study the dynamics close and 
tending to a singularity of the system which in models of $N$-body problems corresponds to 
\emph{total collision}. We restrict to potentials that exhibit two more singularities that can be regarded
as two kind of partial collisions when not all the bodies are involved. 
Regularizing the singularities, the total collision transforms into a 2-dimensional invariant manifold. 
The goal of this paper is to prove the existence of
different types of ejection-collision orbits, that is, orbits that start and end at total collision.
Such orbits are regarded as heteroclinic connections between two equilibrium points
 and are mainly characterized by the partial collisions that the trajectories find on their way.
The proof of their existence is based on the transversality of 2D-invariant manifolds
and  on the behavior of the dynamics
 on the total collision manifold, both of them are thoroughly described.  
\end{abstract}

\section{Introduction}
In Celestial Mechanics the goal is to understand the dynamical behavior of $N$ particles which interact under their mutual newtonian gravitational attraction.
 Although many studies have been devoted to this problem, whose dynamics
turns out to be tremendously rich from a dynamical point of view,
 it is very far from being well understood. 
A particular critical point 
to understand the dynamics of the $N$-body problem is to identify the behavior near the total collision and the escape to infinity. This is typically done by  introducing a boundary {\sl total collision} manifold for each of the energy surfaces and by constructing the missing components of its boundary as other submanifolds, which represent the asymptotic behavior at infinity.
This can be obtained, first, through a McGehee's change of coordinates (plus a scaling of time) that allows to blow up the origin (where the total collision of the bodies takes place),
and, secondly, by generalizing McGehee's change of variable in the configuration space to blow up the infinity.  See \cite{mcgehee1974}, \cite{Devaney1980}, and \cite{SimoLacomba1982} on how to apply the blow up of the origin for different three or four body problems. In  \cite{1982LacombaSimo} surges this novel idea of blowing up the infinity,  where the authors study the total collision and infinity manifolds  associated to several problems. 

An intriguing question is concerned with the so called 
 ejection-collision orbits (ECO), that is,
trajectories where all the bodies eject from the same point and after some time they collide at the
same point. 
 Taking into account the collision manifold, 
ECO may be regarded as heteroclinic connections between suitable hyperbolic 
equilibrium 
points of the blown up dynamical system, that is, ECO are obtained from 
the intersection of the invariant manifolds associated with these equilibrium points.  
\citet{SLL_ECO}   consider the $N$-body problem in $\R ^d$ (being $d$ any dimension), and 
characterize the transversality between these stable and unstable manifolds.
 A general reference for collisions in $N$-body problems is due to  \citet{BookSaari05}.

In this paper, 
we consider the general system of ODE with two degrees of freedom: 
\begin{equation}
\left\{
\begin{array}{rcl}
   \dot \qb &=& A^{-1} \pb, \\
   \dot \pb &=& \nabla U(\qb),
\end{array}
\right.
  \label{eq:sist1}
\end{equation}
where $\qb\in \R^2 \setminus \Delta$, $\pb\in \R^2$, 
 $A$ is a diagonal constant matrix, $A=\diag(a_1,a_2)$, $a_1$, $a_2>0$,  and $U$ is an homogeneous function of degree -1 on $\R^2\setminus \Delta$.
 We remark that such a system defines a general setting that, in particular, 
 includes some subproblems of the $N$-body
problem. Moreover  $U$ is
 singular in $\Delta$, that corresponds to all the possible collisions between the bodies
 (in the context of the $N$-body problem).  In particular $\qb=0\in \Delta$ corresponds to the total collision of all the bodies. 
A main goal of this paper is focussed on the ejection-collision orbits.
  More concretely, we want to prove the existence of ECO under certain conditions of the potential $U$ (that will appear later on).
 We will follow the ideas already used, for specific problems, by McGehee, Lacomba, Saari, Kaplan, etc. 
One direction to tackle this problem has been to consider few body problems, as the collinear three body problem \cite{mcgehee1974,1999Kaplan} or the isosceles three body problem \cite{Devaney1980}. Some others have some symmetries involved, as the symmetric collinear four body problem \cite{2004LacombaMedina,am2014,2019MMME}, the trapezoidal four body problem \cite{Lacomba1983,amv,am2019}, and the rhomboidal four body problem \cite{1991DelgadoPerezC} and \cite{LacombaPerezCh1993}. Some of them  depend on parameters associated to the masses.

These problems  have been given much attention, and they share several common properties that we generalize and use to obtain information to prove analytically  the existence of ECOs. In a prior paper by the authors of this article \cite{2019MMME}, a family of ECO was obtained from a numerical point of view in the symmetric collinear four body problem.

Our main objectives are two. First, in a general setting of a Hamiltonian system of two degrees of freedom as in Eq. \eqref{eq:sist1} with a potential with specific characteristics, to describe the main characteristics of its dynamics in which the collision manifold play a key role. This is done in Sections \ref{sect:general} and \ref{sect:dynamics}.
Secondly, to prove 
 the existence and to give a classification of the ECO that can be obtained depending on the specific behavior of the 1D-invariant manifolds on the total collision manifold. The main results are given in Theorems \ref{teo1}--\ref{teo4}.
In order to accomplish our aims, we will review and show results already known in specific three and four body problems, where we shall see that the dynamics are essentially the same.  So, we will recover here all of them.

\section{General setting}\label{sect:general}

In this section, we give the conditions for the undergoing  potential, 
$U$ in \eqref{eq:sist1}, 
recall some particular examples,  
derive the regularized equations of the model and present the main features
 (we consider a suitable Poincar\'e section and define the collision manifold) 
that will play an essential role along the paper. 

\subsection{Statement of the problem}
We consider the problem defined by a Hamiltonian system with two degrees of freedom and Hamiltonian function
\begin{equation}
  H(\qb,\pb) = \dfrac12 \pb^T A^{-1}\pb - U({\bf q}),
  \label{eq01}
\end{equation}
$\qb\in \R^2 \setminus \Delta \subset \mathbb{R}^2$, $\pb \in \mathbb{R}^2$, $A$ is a diagonal constant matrix, $A=\diag(a_1,a_2)$, $a_1$, $a_2>0$, and ${U}$ is an homogeneous function of degree $-1$ with some properties to be specified later.
We can think of an $N$-body problem with the Newtonian potential (we will show some of these problems later), where $\qb=0$ is a singularity that corresponds to the total collision of the bodies. The model was also considered, for example, by   \citet{2012Martinez,2013Martinez} to study the existence of Shubart-like orbits. Here, we recall some known features of the model. For more details see the aforementioned articles.

It is well known that the Hamiltonian $H$ is a constant of motion for the $N$-body problem. We confine our attention to a fixed negative level of energy  $H=h<0$. Thus, we have the following classical  result.
\begin{proposition}
  Consider the Hamiltonian system given by \eqref{eq01}. Then, bounded motion can only occur for $h<0$.
\end{proposition}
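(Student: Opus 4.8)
The plan is to exploit a Lagrange--Jacobi type identity together with the homogeneity of $U$. I would introduce the weighted moment of inertia $I(t) = \tfrac12 \qb^T A \qb$, a smooth positive function measuring the size of the configuration, and differentiate it twice along solutions of \eqref{eq:sist1}. Since $A$ is constant, symmetric and positive definite, the equation $\dot\qb = A^{-1}\pb$ gives $\dot I = \qb^T\pb$, and then $\dot\pb = \nabla U(\qb)$ gives
\begin{equation*}
  \ddot I = \pb^T A^{-1}\pb + \qb^T \nabla U(\qb) = 2T + \qb^T\nabla U(\qb),
\end{equation*}
where $T = \tfrac12 \pb^T A^{-1}\pb \ge 0$ denotes the kinetic energy.

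Next I would use that $U$ is homogeneous of degree $-1$. By Euler's identity this means $\qb^T\nabla U(\qb) = -U(\qb)$, so the previous relation becomes $\ddot I = 2T - U$. Replacing $T = h + U$ (which follows from $H = h$ in \eqref{eq01}) yields the clean expression $\ddot I = 2h + U(\qb)$. Assuming, as is standard for the Newtonian-type potentials under consideration (and presumably among the properties of $U$ to be specified), that $U > 0$ on $\R^2\setminus\Delta$, this already gives $\ddot I \ge 2h$ along every trajectory.

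It then suffices to prove the contrapositive: if $h \ge 0$, no trajectory can be bounded. Suppose a trajectory stays in a bounded region, say $|\qb| \le R$, so that $I$ is bounded. Writing $U(\qb) = |\qb|^{-1}\,U(\qb/|\qb|)$ and using that $U$ is continuous and positive on the unit circle while tending to $+\infty$ as the direction approaches a collision in $\Delta$, the minimum $m_0$ of $U$ over the unit directions is attained and strictly positive; hence $U(\qb) \ge m_0/R > 0$ throughout the region. Therefore $\ddot I \ge 2h + m_0/R \ge m_0/R > 0$, which forces $I(t) \ge I(0) + \dot I(0)\,t + \tfrac{m_0}{2R}\,t^2 \to \infty$, contradicting boundedness. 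Consequently bounded motion can only occur for $h < 0$. The main delicacy I anticipate is precisely the borderline case $h = 0$: there the inequality $\ddot I \ge 2h$ is vacuous, and one must genuinely use that the only place where $U$ becomes small is at infinity (it blows up, rather than vanishes, at the collision set $\Delta$) in order to secure a strictly positive lower bound on $U$, and hence on $\ddot I$, over any bounded portion of the orbit.
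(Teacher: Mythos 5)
Your proof is correct and follows exactly the route the paper indicates: it derives the Lagrange--Jacobi identity $\ddot I = U + 2h$ for $I=\tfrac12\,\qb^T A\qb$ from the homogeneity of $U$ of degree $-1$, which is precisely what the paper invokes (deferring the details to Saari's Proposition 4.1). Your fully written-out convexity/contradiction argument, including the careful handling of the borderline case $h=0$ via a positive lower bound for $U$ on bounded sets, is just the standard completion of that same argument.
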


The proof of this result is  based on the Lagrange-Jacobi equation $\ddot{I}=U+2h$, where $I=\frac12 (\qb^T A \qb)$ is the moment of inertia, and the fact that $U$ is a homogeneous function of degree $-1$. See, for instance, Proposition 4.1 in \cite{BookSaari05}.

Our goal is to study the existence of \emph{ejection-collision} orbits (ECO from now on). Roughly speaking, an ejection orbit is an orbit that ``starts'' at $\qb=0$, and a collision orbit is an orbit that ``ends'' at $\qb=0$ (we give the precise definition later on). Therefore, it is mandatory to regularize the singularity $\qb=0$. Regularization theory is a tool that allows us to transform a singular differential equation into a regular one, in such a way that we can analyze, under the regularized equations, the behavior of solutions leading to collisions.

We use  McGehee's coordinates \cite{mcgehee1974}, that not only regularize but perform a blow up of the total collision $\qb=0$. 
First, the following change to a polar-like set of coordinates $r$, $\theta$, is introduced:
\begin{equation*}
r^2 = \qb^T A\qb, \qquad {\bf q} =r {\bf s}=r (A^{-1})^{1/2}(\cos \theta \; \sin\theta)^T.
\end{equation*}
where ${\bf s}^T A {\bf s}=1$. Differentiating, $\dot\qb$ can be written as
$$ \dot\qb = \dot r \; {\bf s} + r\dot{\theta}\; {\bf u},$$
where ${\bf u}=(A^{-1})^{1/2}(-\sin \theta \; \cos\theta)^T$, so that ${\bf u}^T A {\bf u}=1$, ${\bf s}^T A {\bf u}=0$, and
the radial component of the velocity is given by $\dot{r}={\bf s}^T{\bf p}$.  Next, variables $v$, $u$ are defined as
\begin{equation*}
v=r^{1/2}\dot{r}, \qquad u=r^{3/2}\dot \theta,
\end{equation*}
so that $\pb = r^{-1/2} A (v\;{\bf s} + u\; {\bf u})$.

Introducing the new coordinates $(r,v,\theta,u)$
together with the scaling in time given by $d\tau = r^{-3/2} dt$, the equations of motion become
\begin{equation}
\left\{
  \begin{array}{rcl}
    \dfrac{dr}{d\tau} &=& rv,
    \\
   \addlinespace
    \dfrac{dv}{d\tau} &=& \dfrac{v^2}2+u^2-V(\theta),
    \\
   \addlinespace
    \dfrac{d\theta}{d\tau} &=& u,
    \\
   \addlinespace
    \dfrac{du}{d\tau} &=& \dfrac{-vu}{2}+V'(\theta),
  \end{array}
  \right.
\label{eq03}
\end{equation}
where $V(\theta)=r U(\qb)$ and $V'=dV/d\theta$. Clearly, the system of equations (\ref{eq03}) can be extended to $r=0$, which is an invariant manifold of the system.

The energy relation $h=H$ in these new variables is written as
\begin{equation}
hr = \dfrac12 (v^2+u^2)-V(\theta).
  \label{eq04}
\end{equation}
Notice that for any fixed energy level $h<0$,
\begin{equation}
  V(\theta) + hr =0
  \label{eq04b}
\end{equation}
is the zero velocity curve which limits the region in configuration space where the motion is admissible (see Figure~\ref{fig:config2}).

We want to prove the existence of ECO under certain conditions for the potential $V(\theta)$. As it is common in this kind of problems, the leading actors in the dynamics of the problem are the invariant manifold  $r=0$, the existence of unstable equilibrium points and the behavior of the invariant manifolds associated to them. Next result states the hypotheses on the potential $V(\theta)$ to ensure the existence of the key ingredients.

\begin{proposition}\label{prop1}
  Assume that $V(\theta)$ is such that
  \begin{equation*}
      V(\theta) = \dfrac{c_b}{\sin(\theta_b-\theta)}+\dfrac{c_a}{\sin(\theta-\theta_a)} + \widetilde{V}(\theta),
  \end{equation*}
  where $\theta \in (\theta_a,\theta_b)$  for fixed values $\theta_a$, $\theta_b$ such that
  $ 0 < \theta_b-\theta_a \leq  \pi$, and
  \begin{itemize}
    \item $c_a>0$, $c_b\geq 0$ are constants, and $c_b=0$ if and only if $ \theta_b-\theta_a =\pi$;
    \item $\widetilde{V}(\theta)>0$ is a smooth bounded function in $[\theta_a,\theta_b]$;
    \item $V(\theta)$ has only one non-degenerate critical value at $\theta=\theta_c\in(\theta_a,\theta_b)$, which is a minimum.
  \end{itemize}
  Then, the system of equations (\ref{eq03}) has two equilibrium points, denoted by $E^{\pm}$,
  given by $r=0$, $v=\pm v_c$, $\theta= \theta_c$, $u=0$, where $v_c^2=2V(\theta_c)$.
  Both equilibrium points $E^{\pm}$ are saddle points, and there exist invariant manifolds
 $W^{u/s}(E^{\pm})$. Restricted to a fixed energy level $H=h$, $\dim(W^u(E^-))=1$, $\dim(W^s(E^-))=2$ and  $\dim(W^u(E^+))=2$, $\dim(W^s(E^+))=1$.
\end{proposition}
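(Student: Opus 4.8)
The plan is to proceed in three stages: locate the equilibria, linearize to establish the saddle character in the full four-dimensional phase space, and then carefully account for the restriction to the energy level.

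First I would impose $dr/d\tau = dv/d\tau = d\theta/d\tau = du/d\tau = 0$ in \eqref{eq03}. The third equation gives $u=0$, whereupon the fourth forces $V'(\theta)=0$; by hypothesis the only critical point of $V$ in $(\theta_a,\theta_b)$ is $\theta_c$, so $\theta=\theta_c$. With $u=0$ the second equation reduces to $v^2/2 = V(\theta_c)$. Here I would invoke positivity of $V$: on $(\theta_a,\theta_b)$ one has $\sin(\theta-\theta_a)>0$ and $\sin(\theta_b-\theta)>0$ (since $0<\theta_b-\theta_a\leq\pi$), while $c_a>0$, $c_b\geq 0$ and $\widetilde{V}>0$, so $V(\theta_c)>0$ and $v=\pm v_c$ with $v_c=\sqrt{2V(\theta_c)}\neq 0$. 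Finally, since $v\neq 0$, the first equation $rv=0$ forces $r=0$. This produces exactly the two equilibria $E^{\pm}$ claimed.

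Second, I would compute the Jacobian of the vector field at $E^{\pm}$. Using $r=0$, $u=0$, and $V'(\theta_c)=0$, I expect all cross terms between the pairs $(r,v)$ and $(\theta,u)$ to vanish, leaving a block-diagonal matrix: an $(r,v)$-block equal to $\pm v_c$ times the $2\times 2$ identity (hence a double eigenvalue $\pm v_c$ with a two-dimensional eigenspace, the whole $(r,v)$-plane), and a $(\theta,u)$-block $\begin{pmatrix} 0 & 1 \\ V''(\theta_c) & \mp v_c/2 \end{pmatrix}$. Nondegeneracy and minimality of $\theta_c$ yield $V''(\theta_c)>0$, so this block has determinant $-V''(\theta_c)<0$ and therefore two real eigenvalues $\lambda_-<0<\lambda_+$ of opposite sign. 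Consequently, in the full four-dimensional space $E^-$ has signature (three negative, one positive) and $E^+$ has (three positive, one negative); both are saddles, and standard invariant manifold theory furnishes $W^{u/s}(E^{\pm})$.

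Third, I would restrict to the energy level. The natural function is $G=\tfrac12(v^2+u^2)-V(\theta)-hr$, and a short computation gives $\dot G = vG$, so $\{G=0\}$ is invariant and contains $E^{\pm}$ (where $hr=0$ and $v_c^2/2=V(\theta_c)$). The key observation, and the step demanding the most care, is that $\nabla G$ evaluated at $E^{\pm}$ equals $(-h,\pm v_c,0,0)$, which lies inside the $(r,v)$-eigenspace of the double eigenvalue $\pm v_c$. Hence the tangent space to $\{G=0\}$ at $E^{\pm}$ decomposes as a line inside the $(r,v)$-plane (still an eigenvector for $\pm v_c$) together with the entire $(\theta,u)$-plane, so the linearized flow restricted to this $3$-dimensional tangent space has eigenvalues $\{\pm v_c,\lambda_-,\lambda_+\}$. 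For $E^-$ this gives $\{-v_c,\lambda_-,\lambda_+\}$ with two negative and one positive, so $\dim W^s(E^-)=2$ and $\dim W^u(E^-)=1$; symmetrically $E^+$ yields $\dim W^u(E^+)=2$ and $\dim W^s(E^+)=1$. The main obstacle is precisely this bookkeeping: one must confirm that $\nabla G$ is a genuine eigenvector (equivalently, that the energy-gradient direction is not transverse to the hyperbolic splitting), so that reducing to the energy surface removes one copy of $\pm v_c$ and lands in the correct sign class rather than altering the saddle index.
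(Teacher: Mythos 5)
Your proof is correct and it is essentially the argument the paper relies on: the paper does not prove Proposition~\ref{prop1} itself but defers to \cite{2012Martinez}, where the equilibria are likewise found by solving the fixed-point equations of \eqref{eq03} and the saddle character and manifold dimensions are read off from the block-diagonal linearization (the $(r,v)$-block $\pm v_c I$ and the $(\theta,u)$-block with determinant $-V''(\theta_c)<0$), restricted to the invariant energy surface. Your computations check out, including the key bookkeeping step that $\nabla G$ at $E^{\pm}$ lies in the $(r,v)$-eigenplane so that passing to the energy level removes exactly one copy of $\pm v_c$.
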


See \cite{2012Martinez} for the proof of the last proposition, and a discussion about the linear approximation of the invariant manifolds depending on $\theta_c$.  Sim\'o and Llibre \cite{SLL_ECO} give a condition on the potential at the critical point
to prove the existence of transversal intersection of the invariant manifolds associated to total collision and total ejection in a general $n$-body problem of dimension $d$. In our case, the condition is fulfilled by the fact that $\theta_c$ is a non-degenerate critical value.

In Figure~\ref{fig:config2} we show the configuration space $(q_1,q_2)$, which is a subset of the half-cone $\theta \in (\theta_a,\theta_b)$ limited by the zero velocity curve \eqref{eq04b}.
      \begin{figure}[h!]
      \centering
         \includegraphics{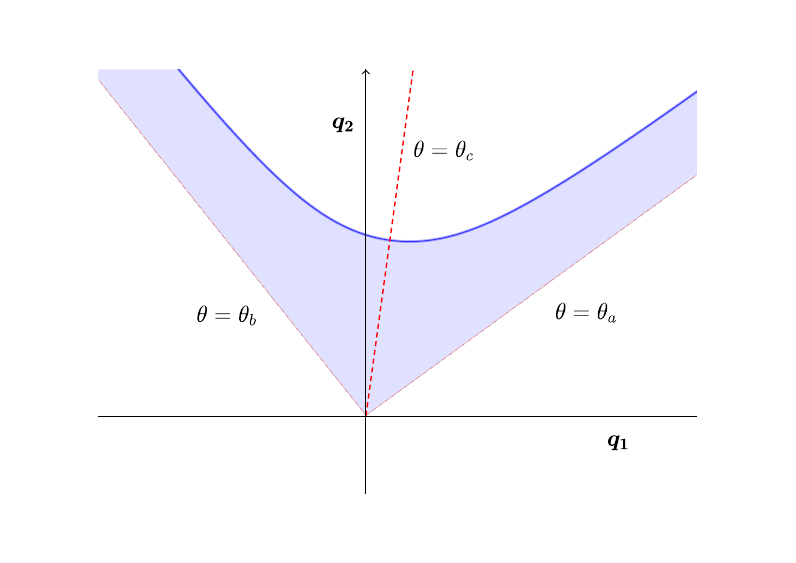}\quad 
         \includegraphics[width=6cm]{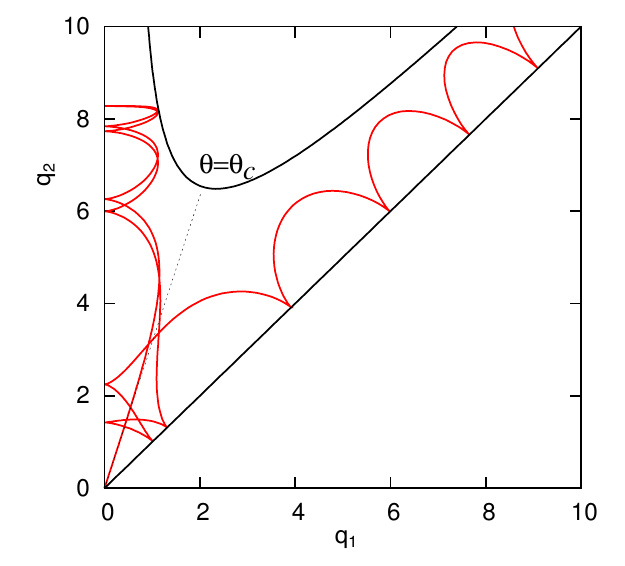}
         \caption{Left: qualitative picture of the configuration space of the system \eqref{eq03}, where the zero velocity curve \eqref{eq04b} is also shown. Right: example of an ejection trajectory in the configuration space in the model SC4BP for $\alpha=1$ (see Section~\ref{examples}).}
         \label{fig:config2}
       \end{figure}

In the models of $N$-body problems, when a collision between two or more bodies occurs, the distance between them becomes zero and the velocity of the colliding particles is infinite. This corresponds to a singularity for Newton's equations. As we mentioned, one kind of collision occurs when all  particles of the system collide simultaneously, and corresponds to $r=0$. In fact, the manifold defined as
 \begin{equation}
   {\mathcal C}:= \{ (r,\theta,v,u) | \: r=0, \: \theta_a < \theta < \theta_b, \: u^2+v^2=2V(\theta) \},
   \label{colman}
 \end{equation}
is invariant under the flow given by equations (\ref{eq03}), and it is called \emph{the total collision manifold}. By Proposition~\ref{prop1}, the equilibrium points $E^{\pm} \in {\mathcal C}$.

It is relevant that the flow on $\mathcal C$ is almost-gradient with respect to $v$, see \cite{mcgehee1974}. That is, introducing the energy relation \eqref{eq04} in the second equation of system \eqref{eq03}, $dv/d\tau = hr+u^2/2$, so when $r=0$, $dv/d\tau \geq 0$. Later on, we will strongly use this property of the flow on the total collision manifold  
$\mathcal{C}$.

Other singularities appear when ``partial'' collisions occur, that is, collisions when not all of the  bodies are involved. The simplest one, a binary collision, arises when two bodies occupy the same point. Also there can be collisions of more than two bodies or simultaneous collisions of different clusters of particles.
Under the hypotheses of Proposition~\ref{prop1}, these additional singularities occur at $\theta=\theta_{a,b}$. Therefore, we will say that we have a collision of type $a$ or $b$ depending on whether $\theta=\theta_a$ or $\theta=\theta_b$.

Next we define ejection and collision orbits. These are orbits that tend, backwards and forwards respectively, to $r=0$, so they belong to the invariant manifolds of the equilibrium points $W^{u/s}(E^{\pm})$. As we will see, the one dimensional invariant manifolds are embedded in the collision manifold $\mathcal C$, so we only consider the trajectories on the two dimensional invariant manifolds for their definition.
\begin{definition}\label{def:eco}
We say that an orbit is a collision orbit if it is contained in $W^s(E^-)$ and an ejection orbit if it is contained in $W^u(E^+).$ An orbit is an ejection--collision orbit if it is contained in $W^s(E^-)\cap W^u(E^+).$
\end{definition}
Therefore, an ejection-collision orbit (ECO) satisfies $\displaystyle{\lim_{\tau \to\pm\infty}r(\tau)=0}.$

From now on, we consider that we are under the hypotheses of Proposition~\ref{prop1} and the energy is fixed at a value $h<0$.

\subsection{Examples from Celestial Mechanics}\label{examples}

Next, we provide a few examples of problems from Celestial Mechanics that match the general model presented.

\begin{itemize}

  \item The rectangular four body problem (Rec4BP). In this problem, four equal masses lie at the vertices of a rectangle, so that their positions and velocities are symmetric with respect to two axes, vertical and horizontal, passing through their center of mass, placed at the origin. Its potential can be written as $$V(\theta)=2+\frac{2}{\cos\theta}+\frac{2}{\sin\theta},$$
       for $\theta \in (0,\pi/2)$. The two singularities correspond to two double collisions. See, for example, \cite{SimoLacomba1982,2008LacombaMedina}.

  \item The rhomboidal four body problem (Rh4BP). In this problem, there are two different pairs of equal mass particles, $m_1=m_3$ and $m_2=m_4$ and $\alpha$ is the mass ratio between them. The bodies are placed at the vertices of a rhombus with initial positions and velocities  symmetric  with respect to the diagonals of the rhombus. The potential is given by
  $$V(\theta)=\frac{1}{\sqrt{2}\cos\theta}+\frac{\alpha^{5/2}}{\sqrt{2}\sin\theta}+\frac{4\sqrt{2}\alpha^{3/2}}{\sqrt{\alpha\cos^2\theta+\sin^2\theta}},$$ for $\theta\in (0,\pi/2)$. As in the previous example, the two singularities correspond to two double collisions. See, for example, \cite{1991DelgadoPerezC,LacombaPerezCh1993}.

  \item The symmetric collinear four body problem (SC4BP). In this problem four bodies of masses, $m_4=\alpha$, $m_2=1$, $m_1=1$, and $m_3=\alpha$, $\alpha >0$,  are collinear,  ordered from left to right  and moving symmetrically by pairs about their center of mass.       
  In this case, the potential writes
\[
        V(\theta)=\frac{1}{\sqrt{2}} \left( \frac1{\cos\theta}+\frac{\alpha^{5/2}}{\sin\theta}\right) \\
      + \frac{2\sqrt{2}\alpha^{3/2}}{\sin\theta-\sqrt{\alpha}\cos\theta}
      +\frac{2\sqrt{2}\alpha^{3/2}}{\sin\theta+\sqrt{\alpha}\cos\theta}
\]
       for $\theta_{\alpha} \leq \theta \leq \pi/2$, where $\theta_\alpha=\arctan (\sqrt{\alpha} \,)$. The singularity $\theta=\theta_{{\alpha}}$  corresponds to double binary collisions between $m_4$ and $m_2$, and $m_1$ and $m_3$, and the singularity $\theta=\theta_{{\pi/2}}$  corresponds to single binary collisions between $m_1$ and $m_2$. See, for example, \cite{2004LacombaMedina,amv,2019MMME}.

  \item The collinear three body problem (C3BP), where three masses $m_1,m_2$ and $m_3$ form a collinear configuration, labelled from left to right. The potential is given by
      \begin{multline*}
        V(s)=\sin (2\lambda) \left(\frac{m_1m_2}{(b_2-b_1)\sin\left(\lambda(s+1)\right)}+\frac{m_2m_3}{(a_3-a_2)\sin\left(\lambda(1-s)\right)} \right.\\
        \left.+\frac{m_1m_3}{(b_2-b_1)\sin(\lambda(s+1))+(a_3-a_2)\sin(\lambda(1-s))}\right),
      \end{multline*}
        where $s\in (-1,1)$ and $\lambda$ is a constant depending on the masses of the system. The singularities correspond to collisions of the left binaries, $s=-1$, or collisions of the right binaries, $s=1$. See \cite{mcgehee1974,1999Kaplan}.

  \item A symmetric planar $2N$ body problem. Consider $2N$ equal masses located in a configuration in which the mass $m_j$ is symmetric to $m_{j+1}$ with respect the line $\theta=j\pi/N$, $j=1,\ldots,N$. Due to the symmetries of the problem, it reduces to a two degrees of freedom system with potential
      $$ V(\theta) = \dfrac{1}{\sin(\pi/n-\theta)}+\dfrac{1}{\sin(\pi/n+\theta)} + \widetilde{V}(\theta),$$
      for $-\pi/N \leq \theta \leq \pi/N$, and $ \widetilde{V}(\theta)$ analytic. See, for example, \cite{2013Martinez}.
\end{itemize}

\subsection{Regularization of non-total collisions}\label{sect:regularization}

In the present model, the system of equations \eqref{eq03} has singularities at $\theta=\theta_a$ and $\theta=\theta_b$. These singularities correspond to distinct partial collisions for the different $N$-body problems. For instance, for the Rec4BP, both singularities correspond to double binary collisions between two different pairs of bodies, whereas for the SC4BP, $\theta=\theta_{\alpha}$ corresponds to a double binary collision of the two particles on the left and the two particles on the right, and $\theta=\pi/2$ corresponds to a single binary collision of the two particles at the center.

The two singularities at $\theta=\theta_a,\theta_b$ can be removed simultaneously through a
Sundman type regularization. See \cite{2012Martinez}, and the references therein for more details.
Consider the  functions
\begin{equation*}
W(\theta)=f(\theta)V(\theta) \quad \hbox{and} \quad F(\theta)=\dfrac{f(\theta)}{\sqrt{W(\theta)}},
\end{equation*}
where $f(\theta)=\sin(\theta-\theta_a)\sin(\theta_b-\theta)$ if $\theta_b-\theta_a\neq \pi$, and
 $f(\theta)=\sin(\theta_b-\theta)$ otherwise. Notice that $W(\theta)$ is a positive and bounded smooth function in 
 $[\theta_a,\theta_b]$. Then, introducing a new variable and the change of time
 \begin{equation*}
 w=F(\theta)u, \qquad d\tau = F(\theta) ds,
 \end{equation*}
 the system of equations (\ref{eq03}) transforms into
 {\small
\begin{equation}
\left\{
  \begin{array}{rcl}
    \dfrac{dr}{ds} &=& rvF(\theta),
    \\
   \addlinespace
    \dfrac{dv}{ds} &=& F(\theta) \left( 2hr - \dfrac{v^2}{2}\right)+\sqrt{W(\theta)},
    \\
   \addlinespace
    \dfrac{d\theta}{ds} &=& w,
    \\
   \addlinespace
    \dfrac{dw}{ds} &=& -F(\theta)\dfrac{vw}{2}+\dfrac{W'(\theta)}{W(\theta)}\left(f(\theta)-\dfrac{w^2}2\right)
     +f'(\theta) \left(1 + \dfrac{f(\theta)}{W(\theta)}(2hr-v^2) \right),
  \end{array}
  \right.
\label{eq05}
\end{equation}
}
where $W'=dW/d\theta$.

The energy relation $H=h$ \eqref{eq04}, in these new variables, writes
\begin{equation}
W(\theta) w^2 + f(\theta)^2 v^2 = 2f(\theta)^2rh+2W(\theta)f(\theta).
  \label{energy-reg}
\end{equation}

Notice that if $(r,v,\theta,w)$ is a solution with energy $h$, then $(\lambda r,v,\theta,w)$ is also a solution with energy $\lambda h$, with $\lambda>0$. Therefore, it is enough to fix any negative value for $h$. 

We will study the system of differential equations \eqref{eq05} in the regularized and
reduced McGehee coordinates $(r,v,\theta,w)$ on the  phase space
$\mathcal{F}=[0,\infty)\times\mathbb{R}\times I \times \mathbb{R}$,
$I=[\theta_a,\theta_b]$.  A solution of the above set of ordinary differential equations, also called orbit, will be denoted by $\Gamma=\left\{\gamma(s,\xi)\right\}_{s}$ (or just by $\gamma(s)$), where $\gamma(0,\xi)=\xi$.

Notice that system (\ref{eq05}) exhibits the symmetry
\begin{equation}
{\mathcal S}: \; (r,v,\theta,w,s) \to (r,-v,\theta,-w,-s).
\label{eq:sym}
\end{equation}
This can be phrased in terms of solutions as follows: if $\gamma(s)=(r(s),v(s),\theta(s),w(s))$ is a solution then 
$\overline{\Gamma}$ defined as:
 \begin{equation}
 \overline\gamma(s)=(r(-s),-v(-s),\theta(-s),-w(-s))
 \label{eq:orbsym}
 \end{equation}
  is also a solution.

The claim of Proposition~\ref{prop1} persists in the new variables, so system \eqref{eq05} has two hyperbolic equilibrium points $E^{\pm}$, with coordinates $(r,v,\theta,w)=(0,\pm v_c,\theta_{c},0)$.
Next result states the existence of an orbit that connects both equilibrium points (see \cite{2012Martinez}). It is called the homothetic solution because the configuration  maintains the same shape along its evolution for
all the time, only changing its size.

\begin{proposition}\label{homot}
For every fixed level of energy $H=h<0$, there exists a solution of the system of equations \eqref{eq05} of the form
$$ \gamma_h(s) = (r(s), \theta=\theta_c, v(s), w=0),$$
such that $r(s) \underset{s\to \pm \infty}{\longrightarrow}0$.
\end{proposition}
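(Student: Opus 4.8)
The plan is to exhibit the homothetic orbit explicitly as the integral curve of \eqref{eq05} contained in the plane $P=\{\theta=\theta_c,\; w=0\}$, and then to read off the limiting behaviour of $r$ from a reduced scalar equation. The guiding observation is that $\theta_c$ is a critical point of $V$, so $V'(\theta_c)=0$; since $W=fV$ this yields $W'(\theta_c)=f'(\theta_c)V(\theta_c)$, and I would combine this with the energy relation \eqref{energy-reg} to show that the vector field of \eqref{eq05} is tangent to $P$ along the level set $H=h$. Concretely, I would propose the curve $\theta(s)\equiv\theta_c$, $w(s)\equiv 0$, with $v(s)$ a solution of the scalar equation
\begin{equation*}
\frac{dv}{ds}=\frac{F(\theta_c)}{2}\left(v^2-v_c^2\right),
\end{equation*}
and $r(s)$ determined by \eqref{energy-reg}, which on $P$ reduces to $v^2=2hr+v_c^2$, i.e. $r=(v^2-v_c^2)/(2h)$ (recall $v_c^2=2V(\theta_c)$ and $W(\theta_c)=f(\theta_c)V(\theta_c)$).

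I would then verify that this curve solves all four equations of \eqref{eq05}. The first equation holds because differentiating $r=(v^2-v_c^2)/(2h)$ gives $r'=vv'/h=rvF(\theta_c)$; the third holds trivially since $w\equiv 0$; and the second is exactly the scalar equation above, once $2hr-v^2=-v_c^2$ is substituted into the right-hand side of the $v$-equation and one uses $F(\theta_c)v_c^2=2\sqrt{W(\theta_c)}$. The only nontrivial check is that $dw/ds$ vanishes identically along this curve: evaluating the fourth equation of \eqref{eq05} at $\theta=\theta_c$, $w=0$ and using $W'(\theta_c)=f'(\theta_c)V(\theta_c)$ together with $W(\theta_c)=f(\theta_c)V(\theta_c)$, the right-hand side collapses to $2f'(\theta_c)+(f'(\theta_c)/V(\theta_c))(2hr-v^2)$, and inserting the energy value $2hr-v^2=-2V(\theta_c)$ makes the two terms cancel. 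This cancellation is the crux of the argument: it genuinely uses both the critical-point hypothesis and the energy constraint, and it is where the main difficulty lies. Granting it, uniqueness of solutions of \eqref{eq05} identifies the orbit through any point of $P\cap\{H=h\}$ with the proposed curve, which simultaneously proves invariance of this curve and supplies the reduced dynamics.

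It then remains to analyze the scalar equation, which is a routine phase-line argument. Since $F(\theta_c)>0$, for $v\in(-v_c,v_c)$ one has $dv/ds<0$, so $v$ decreases strictly and monotonically, with the hyperbolic endpoints $v=+v_c$ and $v=-v_c$ as its $\alpha$- and $\omega$-limits, respectively: $v(s)\to v_c$ as $s\to-\infty$ and $v(s)\to -v_c$ as $s\to+\infty$. These endpoints are exactly the equilibria $E^{\pm}$ of Proposition~\ref{prop1}. Because $r=(v^2-v_c^2)/(2h)\ge 0$ (as $h<0$) and vanishes precisely at $v=\pm v_c$, I conclude that $r(s)\to 0$ as $s\to\pm\infty$, which is the assertion; incidentally this also shows the homothetic orbit lies in $W^u(E^+)\cap W^s(E^-)$, so it is itself an ECO in the sense of Definition~\ref{def:eco}. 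I expect the tangency/cancellation computation of the second paragraph to be the only real obstacle, the remaining steps being direct substitutions and a one-dimensional monotonicity analysis.
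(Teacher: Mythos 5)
Your proof is correct, and in fact it is more than the paper provides: the paper states Proposition~\ref{homot} without proof and defers to the reference \cite{2012Martinez}, so your argument is a genuinely self-contained verification rather than a restatement of the paper's reasoning. I checked the computations: on the plane $\theta=\theta_c$, $w=0$ the energy relation \eqref{energy-reg} does reduce to $v^2=2hr+v_c^2$ (using $W(\theta_c)=f(\theta_c)V(\theta_c)$ and $f(\theta_c)>0$); the identity $F(\theta_c)v_c^2=2\sqrt{W(\theta_c)}$ turns the $v$-equation into $dv/ds=\tfrac{F(\theta_c)}{2}(v^2-v_c^2)$; the $r$-equation follows by differentiating $r=(v^2-v_c^2)/(2h)$; and the crucial cancellation in the $w$-equation, $f'(\theta_c)+f'(\theta_c)\bigl(1+\tfrac{1}{V(\theta_c)}(2hr-v^2)\bigr)=2f'(\theta_c)-2f'(\theta_c)=0$, indeed uses exactly $V'(\theta_c)=0$ together with the energy constraint $2hr-v^2=-2V(\theta_c)$. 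The phase-line analysis then gives $v$ strictly decreasing from $+v_c$ to $-v_c$ (consistent with the fact that $dv/ds=F(\theta_c)hr<0$ for $r>0$, $h<0$), hence $r=(v^2-v_c^2)/(2h)\to 0$ at both ends and the orbit lies in $W^u(E^+)\cap W^s(E^-)$, matching the paper's later assertion in Section~3.2. What your approach buys is an explicit reduction to a one-dimensional Riccati-type equation and a transparent identification of where the hypotheses (non-degenerate critical point of $V$, fixed negative energy) enter; the only cosmetic remark is that once the curve is verified to satisfy all four equations of \eqref{eq05}, the appeal to uniqueness is not strictly needed for existence, only for the claim that this is \emph{the} orbit through each point of the invariant plane.
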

Notice that this is an ejection-collision orbit since it starts and ends at $r=0$.

\subsection{Poincar\'e section and map}
In this section we introduce a convenient Poincar\'e section, which shall be a keystone to show the existence of ECO.
We consider as a Poincar\'e section  the set where partial collisions occur, that is, where $\theta=\theta_{a,b}$. Notice that, from the energy relation~\eqref{energy-reg}, any point on $\Sigma$ also must satisfy $w=0$. 

\begin{definition}\label{def:sect}
We denote by $\Sigma$ is the union of  two half planes $\Sigma_{a,b}$:
        $$\Sigma=\Sigma_a \cup \Sigma_b = \left\{(r,v,\theta,w)\; |\; r\geq 0,\,w=0,\,\theta=\theta_a\right\}
         \cup \left\{(r,v,\theta,w)\; | \; r\geq 0,\,w=0,\, \theta=\theta_b\right\}.$$
\end{definition}

Next, we present a property which shows that if a solution is such that the variable $\theta(s)$ is on the right side of $\theta_c$ increasingly, or on the left side but decreasingly, then the orbit must reach the section $\Sigma$. It has been proved useful in the context of different $N$-body problems, as shown in \cite{1999Kaplan,2000TanikMik} for the C3BP or \cite{ST,2004LacombaMedina,2019MMME} for the SC4BP.  

 \begin{proposition}\label{prop:crossings}
Let $\gamma(s)$ be a solution of the system given by \eqref{eq05} such that a certain time $s_0$, 
either $\theta(s_0) >\theta_c$ and $w(s_0)>0$ or $\theta(s_0) <\theta_c$ and $w(s_0)<0$.
Then, the trajectory must reach the section $\Sigma$ at least once.
\end{proposition}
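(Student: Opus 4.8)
The plan is to show that the hypothesis forces $\theta(s)$ to move monotonically toward one endpoint of $I=[\theta_a,\theta_b]$, and that it cannot stall before reaching it. I would argue by contradiction: suppose the trajectory never reaches $\Sigma$, so $\theta(s)$ stays in the open interval $(\theta_a,\theta_b)$ for all $s\geq s_0$ for which the solution is defined. Consider the case $\theta(s_0)>\theta_c$ and $w(s_0)>0$ (the other case follows by the symmetry $\mathcal S$ of \eqref{eq:sym}, or by an identical argument). Since $d\theta/ds = w$, I want to show $w$ stays positive and bounded away from $0$, forcing $\theta$ to increase past $\theta_b$.

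First I would examine the sign of $dw/ds$ on the relevant region. The key observation is that to the right of the minimum, $V'(\theta)>0$, and the regularized potential term carries this through: since $\theta_c$ is the unique non-degenerate minimum, one has $W'(\theta)>0$ for $\theta>\theta_c$, and the $f'$ term can be controlled near $\theta_c$. The strategy is to find a region in the $(\theta,w)$-plane, bounded below by $w=0$ and lying to the right of $\theta_c$, that is positively invariant: on its boundary $w=0$ with $\theta>\theta_c$, the fourth equation of \eqref{eq05} reduces (using $w=0$) to $dw/ds = \tfrac{W'(\theta)}{W(\theta)}f(\theta) + f'(\theta)\bigl(1+\tfrac{f(\theta)}{W(\theta)}(2hr-v^2)\bigr)$, and I would verify this is strictly positive for $\theta\in(\theta_c,\theta_b)$ using the sign conditions on $W'$ and $f$. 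This prevents $w$ from returning to zero, so once $w(s_0)>0$ the orbit keeps $\theta$ strictly increasing.

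Monotonic increase of a bounded quantity is not enough: I must rule out $\theta(s)\to\theta^*<\theta_b$ with $w(s)\to 0$ asymptotically, i.e.\ an approach to an equilibrium or a slowing-down in the interior. Here I would invoke that the only equilibria on $r=0$ are $E^\pm$ at $\theta=\theta_c$ (Proposition~\ref{prop1}), and show there is no other rest point with $\theta>\theta_c$; more robustly, I would use a Lyapunov-type estimate showing $w$ is bounded below by a positive constant on compact $\theta$-subintervals of $(\theta_c,\theta_b)$, so that $\theta$ traverses any such subinterval in finite $s$. Because $dw/ds>0$ on $w=0$ to the right of $\theta_c$ and the vector field is smooth up to $\theta=\theta_b$ after regularization, $\theta(s)$ cannot accumulate at an interior value; it must reach $\theta_b$, i.e.\ the section $\Sigma_b$, in finite time.

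The main obstacle I anticipate is the fourth equation in \eqref{eq05}: its right-hand side mixes the genuinely favorable potential term $\tfrac{W'}{W}f$ with the $f'$-term, whose sign is not automatically controlled, and with $v$- and $r$-dependent factors. Establishing that $dw/ds>0$ on $\{w=0,\ \theta\in(\theta_c,\theta_b)\}$ uniformly (and the analogous statement near $\theta_a$ for the other case) is the delicate point, since it requires the precise structure of $W(\theta)=f(\theta)V(\theta)$ and the hypothesis that $\theta_c$ is the unique minimum. I would handle it by writing $W' = f'V + fV'$ and using $V'(\theta)>0$ for $\theta>\theta_c$ together with the boundary behavior of $f$, so that the dominant contribution near $\theta_b$ comes from the collision singularity of $V$ and forces the correct sign. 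Once this sign analysis is secured, the positive-invariance and finite-transit-time arguments are routine.
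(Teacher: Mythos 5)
Your overall skeleton matches the paper's proof (show $w$ cannot vanish at an interior value of $\theta$, then rule out an asymptotic stall before $\theta_b$), but the step you yourself flag as delicate --- the sign of $dw/ds$ on $\{w=0,\ \theta\in(\theta_c,\theta_b)\}$ --- is genuinely incomplete as proposed, and the tools you name will not close it. The missing ingredient is the energy relation \eqref{energy-reg}: at a point of the orbit with $w=0$ and $f(\theta)\neq 0$ it gives $f(\theta)\,(2hr-v^2)=-2W(\theta)$, so the bracket $1+\tfrac{f}{W}(2hr-v^2)$ equals $-1$ identically on the energy surface, and
\begin{equation*}
\frac{dw}{ds}\Big|_{w=0}=\frac{W'(\theta)}{W(\theta)}f(\theta)-f'(\theta)=f(\theta)\,\frac{V'(\theta)}{V(\theta)}>0 ,
\end{equation*}
using $W=fV$. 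Without this substitution the $v$- and $r$-dependent factor is uncontrolled ($v^2$ can be arbitrarily large), and the $f'$ term has both signs on $(\theta_c,\theta_b)$ (recall $f=\sin(\theta-\theta_a)\sin(\theta_b-\theta)$ changes monotonicity at the midpoint of the interval), so a ``sign of $W'$ plus dominance near the singularity'' argument cannot give positivity uniformly on the whole interval; indeed your auxiliary claim that $W'(\theta)>0$ for $\theta>\theta_c$ does not follow from the hypotheses ($V$ has its minimum at $\theta_c$, but $W=fV$ need not be monotone there). Note the exact cancellation $\tfrac{W'}{W}f-f'=f\tfrac{V'}{V}$ is essential: the unfavorable part of the $f'$ term is cancelled by part of $\tfrac{W'}{W}f$, which is invisible to a term-by-term sign analysis.

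A secondary, smaller gap: after monotonicity is secured you assert finite transit time to $\theta_b$, but the asymptotic approach $\theta(s)\to\theta_b$, $w(s)\to 0$ as $s\to+\infty$ still has to be excluded explicitly. The paper does this by evaluating the (regularized) vector field at $\theta=\theta_b$, $w=0$, where $f(\theta_b)=0$ forces $dw/ds=f'(\theta_b)=-\sin(\theta_b-\theta_a)\neq 0$, contradicting $dw/ds\to 0$; your appeal to smoothness of the regularized field and to the absence of interior equilibria does not by itself rule out this boundary accumulation. With the energy-relation computation inserted and the endpoint case handled this way, your argument becomes essentially the paper's.
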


\begin{proof}
We may assume that $\theta(s_0) >\theta_c$ and $w(s_0)>0$, and we will see that the orbit must reach the section $\Sigma_b$. 
In the other scenario, the orbit must reach $\Sigma_a$, and it can be proved using the same arguments. 

First, let us prove that $\theta (s)$ cannot reach a maximum at $\theta < \theta_b$ and $s>s_0$. Indeed,
 assume   there exists $s^*$ such that, $\theta(s^*)\neq \theta_b$, $w(s^*)=0$ and $w(s)>0$ for $s\in(s_0,s^*)$. From the properties of function $V$, we have that $V(\theta(s^*))>0$, $V'(\theta(s^*))>0$, and using equations \eqref{eq05} and \eqref{energy-reg} we obtain 
\begin{equation*}
 \dfrac{dw}{ds}(s^*) = f(\theta(s^*)) \dfrac{V'(\theta(s^*))}{V(\theta(s^*))}>0, 
\end{equation*}
so $\theta (s )$ has a minimum in $s^*$, but this is impossible
since $\theta (s)$ increases in $(s_0,s^*)$.

Since $\theta (s)$ is bounded between $\theta _a$ and $\theta _b$, then either
\begin{enumerate}[(i)]
	\item  $\theta (s)$ reaches $\Sigma _b$ and the proposition is proved;

	\item or $\theta (s)$ tends asymptotically to $\theta _b$. Let us prove that this
situation is not possible.  The corresponding solution would satisfy  that
$$\lim_{s\to +\infty} \theta (s)=\theta _b, \quad  \quad \lim_{s\to +\infty} w (s)=0 \quad \hbox{and}
\quad 
\lim_{s\to +\infty} \frac{dw}{ds}(s)=0.$$
However, using equation \eqref{eq05}, if $\theta=\theta_b$ and $w=0$, then
 $\dfrac {dw}{ds} (\theta _b)=-\sin (\theta _b-\theta _a)\neq 0$, which is a contradiction. 
\end{enumerate}
\end{proof}

In Figure~\ref{fig:config2} right, we  show an orbit of the SC4BP exhibiting different partial collisions (when $\theta=\theta_{a,b}$) and crossings with the section $\theta=\theta_c$. Actually, in \cite{ST}, the authors use this later section to construct a Poincar\'e map in order to describe the dynamics for this problem.  However, in general, a trajectory experiences a sequence (maybe finite) of  partial collisions, where the solution reaches $\Sigma_{a,b}$. This idea has been exploited by different authors when studied the three and four body problems mentioned in Section~\ref{examples}, by introducing symbolic dynamics and characterizing the orbits by the sequence of partial collisions that they suffer. See, for example, \cite{1999Kaplan,2000TanikMik,2004LacombaMedina,ST,2019MMME}, and references therein.  In \cite{1999Kaplan,2004LacombaMedina,2019MMME} the authors show that, in order  to deal with ejection-collision orbits, the use of $\Sigma$ is quite more appropriate.

We will follow the same idea. We consider the Poincar\'e map (in forward time) defined on $\Sigma$
\begin{equation}\label{PMap}
{\mathcal P}:\Sigma\longrightarrow\Sigma,
\end{equation}
as ${\mathcal P}(\Zb) = \Phi_s(\Zb)$, where $\Phi_s$ is the flow associated to the system \eqref{eq05}, and $s$ is the first positive time needed to reach the section $\Sigma$ starting at $\Zb$. In a similar way we define ${\mathcal P}^{-1}$, the Poincar\'e map in backward time. 

\subsection{Collision manifold} \label{sect:collision}
We have already defined the total collision manifold $\mathcal C$ in \eqref{colman}, and for simplicity, in the new variables the total collision manifold is also denoted by $\mathcal C$. It corresponds to equation \eqref{energy-reg} for $r=0$
and it is a 2-dimensional manifold, topologically equivalent to a sphere minus four points, independent of the total energy $h$, see Figure~\ref{fig:varcol}.
It is invariant under the flow \eqref{eq05},  which is gradient-like with respect the variable $v$, that is, $dv/ds \geq 0$.
It is also the boundary of the energy constant manifold $H=h$, for every fixed level of energy $h$. We can think the space as a book of infinite sheets where a fixed level of energy corresponds to a sheet of the  book, and the spine corresponds to the zero level of energy,  which is precisely  the total collision manifold.
\begin{figure}[!ht]
\centering
\includegraphics{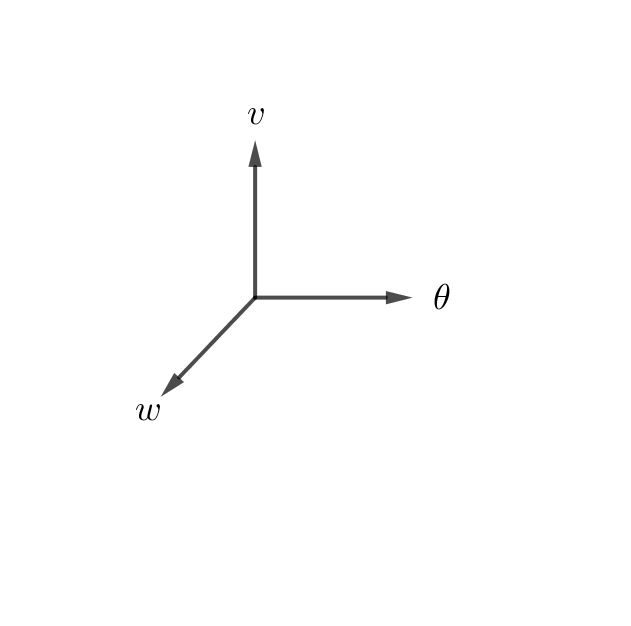}
\includegraphics[width=0.25\textwidth]{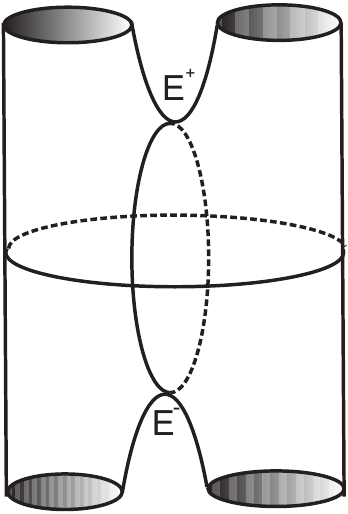}
\caption{Qualitative scheme of the total collision manifold  $\mathcal C$. The two equilibrium points $E^{\pm}$ are also shown.}
\label{fig:varcol}
\end{figure}

Next results provide us with useful information about the solutions on the total collision manifold $\mathcal C$, information that later will be meaningful to describe the dynamics of the invariant manifolds of $E^{+,-}$.
\begin{lemma}
\label{exprop4}
Consider the system given by \eqref{eq05} on the manifold ${\mathcal C}$ and a solution $\gamma(s)$. Then, all the maxima and minima of $\theta(s)$ correspond to points where $w=0$ and either $\theta=\theta_{a,b}$ or $v^2=2V(\theta)$.
\end{lemma}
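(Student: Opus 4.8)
The plan is to characterize the critical points of $\theta(s)$ by combining the condition $d\theta/ds = w = 0$ with the sign of the second derivative $d^2\theta/ds^2$, evaluated on the collision manifold $\mathcal{C}$ where $r=0$. First I would note that, along any solution, a maximum or minimum of $\theta(s)$ forces $w = d\theta/ds = 0$; this is immediate from the third equation of \eqref{eq05}. The substance of the lemma is then to show that, \emph{on} $\mathcal{C}$, the locus $\{w=0\}$ splits cleanly: at such a point either we are at a partial collision ($\theta=\theta_a$ or $\theta=\theta_b$), or else we must have $v^2 = 2V(\theta)$. So the heart of the argument is to examine what the vector field permits at an interior point ($\theta_a < \theta < \theta_b$) with $w=0$ and $r=0$.

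The key computation is to evaluate $d^2\theta/ds^2 = dw/ds$ at a candidate extremum using the fourth equation of \eqref{eq05} specialized to $r=0$ and $w=0$. With $r=0$ and $w=0$, the expression for $dw/ds$ collapses dramatically: the $-F(\theta)vw/2$ term vanishes, the $(W'/W)(f - w^2/2)$ term reduces to $f(\theta)W'(\theta)/W(\theta)$, and the $f'(\theta)(1 + (f/W)(2hr - v^2))$ term becomes $f'(\theta)(1 - f(\theta)v^2/W(\theta))$. Rewriting $W = fV$ throughout, I expect this to simplify to an expression proportional to $f(\theta)\bigl(V'(\theta) - \text{(something)}\,v^2\bigr)$ or, more precisely after collecting terms, to vanish exactly when $v^2 = 2V(\theta)$. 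The cleanest route is to use the energy relation \eqref{energy-reg} at $r=0,\,w=0$, which reads $f(\theta)^2 v^2 = 2 W(\theta)f(\theta)$, i.e. $v^2 = 2V(\theta)$ automatically holds \emph{if} we insist on staying on $\mathcal{C}$ with $w=0$ at an interior $\theta$. Indeed the energy relation forces this, so the dichotomy in the lemma is really: either $f(\theta)=0$ (partial collision $\theta=\theta_{a,b}$), or $v^2 = 2V(\theta)$.

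I would therefore structure the proof as follows. I would start from the energy relation \eqref{energy-reg} restricted to $\mathcal{C}$ (setting $r=0$), which gives $W(\theta)w^2 + f(\theta)^2 v^2 = 2W(\theta)f(\theta)$. At an extremum of $\theta(s)$ we have $w=0$, so this reduces to $f(\theta)^2 v^2 = 2 W(\theta) f(\theta) = 2 f(\theta)^2 V(\theta)$. Factoring out $f(\theta)^2$ yields $f(\theta)^2\bigl(v^2 - 2V(\theta)\bigr) = 0$, whence either $f(\theta) = 0$ or $v^2 = 2V(\theta)$. Since $f(\theta) = 0$ on $[\theta_a,\theta_b]$ holds precisely at $\theta = \theta_a$ or $\theta = \theta_b$ (using the definition of $f$ in both the $\theta_b - \theta_a \neq \pi$ and $= \pi$ cases, together with $W > 0$ on the open interval), this gives exactly the stated alternative.

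The main obstacle, and the point I would check most carefully, is whether the energy relation alone suffices or whether one genuinely needs the second-derivative test. The tempting worry is a degenerate extremum where $\theta(s)$ touches $w=0$ without an honest max or min; but the lemma only asserts a necessary condition (\emph{all} maxima and minima lie in the stated set), so the energy-relation argument is in fact sufficient and no case analysis on the sign of $dw/ds$ is strictly needed. Still, I would include the $dw/ds$ computation as a remark to confirm consistency — in particular to verify that the interior critical points with $v^2=2V(\theta)$ are precisely the points of $\mathcal{C}$ lying on the curve $\{w=0\}$ that bounds the lens-shaped region, and to reconcile this with the equilibria $E^{\pm}$ (where $\theta=\theta_c$, $v^2 = 2V(\theta_c)$). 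The only subtlety to flag is the edge case $\theta_b - \theta_a = \pi$, where $c_b = 0$ and $f = \sin(\theta_b - \theta)$ has a single interior-boundary zero structure; I would confirm $f > 0$ on $(\theta_a,\theta_b)$ there as well, so that the factorization argument is unaffected.
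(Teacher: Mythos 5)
Your proposal is correct and follows essentially the same route as the paper: extrema force $w=0$ by the third equation of \eqref{eq05}, and then the energy relation \eqref{energy-reg} restricted to $r=0$ gives $f(\theta)^2v^2=2W(\theta)f(\theta)$, so either $f(\theta)=0$ (hence $\theta=\theta_{a,b}$) or $v^2=2W(\theta)/f(\theta)=2V(\theta)$. Your added remarks about the second-derivative computation are unnecessary, as you yourself note, since the lemma only asserts a necessary condition.
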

\begin{proof}
By the third equation in (\ref{eq05}), the extrema of $\theta(s)$ satisfy that $w(s)=0$.
On the collision manifold $\mathcal C$, using \eqref{energy-reg}, we have that
$$
f(\theta)^2 v^2 = 2W(\theta)f(\theta).$$
If $f(\theta)=0$, then by definition $\theta=\theta_{a,b}$. If $f(\theta)\neq 0$, then $v^2=  2 W(\theta)/f(\theta)=2V(\theta)$.
\end{proof}

\begin{proposition}
\label{propnova}
On the collision manifold $\mathcal C$,
any solution is such that the variable $\theta$ oscillates from maxima to minima
on $\Sigma$ and/or the curve $v^2=2V(\theta)$, $w=0$.
\end{proposition}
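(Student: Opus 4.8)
The plan is to upgrade the localization statement of Lemma~\ref{exprop4} to a statement about the \emph{type} of each extremum, and then to use the gradient-like structure of the flow on $\mathcal{C}$ together with Proposition~\ref{prop:crossings} to show that these extrema recur, so that $\theta$ genuinely oscillates. By Lemma~\ref{exprop4}, every turning point of $\theta$ along a solution in $\mathcal{C}$ occurs at an instant where $w=0$, and there either $\theta=\theta_{a,b}$ (a point of $\Sigma$) or $v^2=2V(\theta)$ (a point of the interior curve). So the only things left are to decide, at each such point, whether $\theta$ has a maximum or a minimum, and to rule out that the orbit stalls without turning.

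To classify a turning point I would evaluate $dw/ds$ from \eqref{eq05} with $r=0$ and $w=0$. On the interior curve one gets $dw/ds=f(\theta)V'(\theta)/V(\theta)$ --- exactly the expression already computed in the proof of Proposition~\ref{prop:crossings} --- whose sign is that of $V'(\theta)$. Since $\theta_c$ is the unique minimum of $V$, this is positive for $\theta>\theta_c$, giving a minimum of $\theta$, and negative for $\theta<\theta_c$, giving a maximum. On $\Sigma$ one has $f(\theta_{a,b})=0$, so $dw/ds=f'(\theta_{a,b})$; since $f'(\theta_a)>0$ and $f'(\theta_b)<0$ (indeed $f'(\theta_b)=-\sin(\theta_b-\theta_a)$ in the nondegenerate case), the section $\Sigma_a$ always yields minima and $\Sigma_b$ always yields maxima. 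Thus the maxima of $\theta$ lie on $\Sigma_b\cup\{v^2=2V,\ \theta<\theta_c\}$ and the minima on $\Sigma_a\cup\{v^2=2V,\ \theta>\theta_c\}$. Moreover the only points of $\{w=0\}$ with $dw/ds=0$ are the equilibria $E^{\pm}$ (where $\theta=\theta_c$, $V'=0$), so away from $E^{\pm}$ every zero of $w$ is an honest sign change; between consecutive zeros $w$ keeps a fixed sign and $\theta$ is strictly monotone, whence maxima and minima strictly alternate.

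It remains to show that the turning points recur. Here I would use that, as already observed, the flow on $\mathcal{C}$ is gradient-like in $v$ ($dv/ds\ge0$), so $v$ strictly increases along any nonstationary orbit and the orbit is nonrecurrent. If at some instant the orbit moves away from $\theta_c$ (that is, $\theta>\theta_c$ with $w>0$, or $\theta<\theta_c$ with $w<0$) then by Proposition~\ref{prop:crossings} it reaches $\Sigma$, producing a maximum on $\Sigma_b$ or a minimum on $\Sigma_a$; if it moves toward $\theta_c$ it either crosses $\theta_c$, reducing to the previous case, or turns on the curve $\{v^2=2V\}$. Hence turning points keep occurring unless the orbit becomes asymptotic to a limit with $w\to0$.

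The step I expect to be the main obstacle is exactly excluding the nonoscillatory limiting behaviors. An asymptotic limit at an edge $\theta=\theta_{a,b}$ is impossible because there $dw/ds=f'(\theta_{a,b})\neq0$, the contradiction already exploited in Proposition~\ref{prop:crossings}; and an interior limit forces $w\to0$ at a point where $dw/ds=0$, i.e. an equilibrium, so the orbit can only stop oscillating by being asymptotic to $E^{+}$ (forward) or $E^{-}$ (backward), which is consistent with the monotonicity of $v$ and with $E^{\pm}$ being saddles on $\mathcal{C}$ approached monotonically. Once this dichotomy is established, every orbit other than those asymptotic to $E^{\pm}$ has infinitely many alternating turning points, all lying on $\Sigma$ and/or the curve $v^2=2V(\theta)$, $w=0$, which is precisely the claim.
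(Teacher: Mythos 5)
Your proof is correct and follows essentially the same route as the paper's: Lemma~\ref{exprop4} to localize the turning points, Proposition~\ref{prop:crossings} to force returns to $\Sigma$, and the computation $dw/ds=f(\theta)V'(\theta)/V(\theta)\neq 0$ along the curve $v^2=2V(\theta)$, $w=0$ to rule out asymptotic stalling there. The additional material you supply --- classifying maxima versus minima through the signs of $V'$ and $f'(\theta_{a,b})$, and observing that the only degenerate zeros of $w$ on $\mathcal{C}$ are the equilibria $E^{\pm}$ --- is correct and simply makes explicit what the paper leaves implicit.
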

\begin{proof}
 By Proposition~\ref{prop:crossings} and Lemma \ref{exprop4} it is enough to see that 
the orbit cannot tend asymptotically to the curve $v^2=2V(\theta)$, $w=0$.
Suppose that it does as $v$ increases while $s\to \infty$. Then, $\displaystyle{\lim_{s\to\infty} \frac{dw}{ds}=0}$. 
But along any point on such curve, using \eqref{eq05} and \eqref{energy-reg}, we have that
$$\frac {dw}{ds}=f(\theta) \dfrac {V'(\theta)}{V(\theta)} \neq 0,$$
which is a contradiction.     
\end{proof}

In Figure \ref{fig:varinvCol} we show some orbits on the collision manifold in the SC4BP for different values of the parameter of the problem.

{\bf Remark}.
As a consequence of  Proposition \ref{prop:crossings}, any solution in ${\mathcal C}$ is such that forwards in time, $\theta(s)$ either oscillates infinitely between maxima and minima while $v(s)\to \infty$, or oscillates a finite number until the orbit tends to $E^{+}$ (analogously backwards in time). 
In particular any heteroclinic orbit connecting $E^-$ and $E^+$ will
describe a finite number of oscillations between $\Sigma _a$ and $\Sigma _b$.

Notice that on the collision manifold the variables $\theta$ and $w$ are bounded, whereas the variable $v\in (-\infty,+\infty)$. When a solution of equations \eqref{eq05} on $\mathcal C$ is such that $v\to \pm \infty$, we say that it escapes through the right arm if $\theta>\theta_c$, respectively through the left arm if $\theta<\theta_c$.


\section{Dynamics on the invariant manifolds}\label{sect:dynamics}
As stated in previous sections, on one hand, the problem given by equations \eqref{eq05} has a collision manifold $\mathcal C$ corresponding to the blow-up performed at $r=0$, and on the other hand, there exist two hyperbolic equilibrium points, $E^{\pm}\in \mathcal C$, and their corresponding invariant manifolds, see Proposition~\ref{prop1}.
The knowledge of the qualitative behavior of the flow on the total collision manifold will lead us  to read off the behavior of orbits passing near total collision, ejecting from or reaching  total collision.

As stated in Proposition~\ref{prop1}, the equilibrium points are hyperbolic and, for a fixed value of the energy $h$, each one has associated two invariant manifolds: one of dimension one, the other of dimension two. Due to the symmetry \eqref{eq:sym}, the invariant manifolds associated to $E^-$ are symmetric to the ones associated to $E^+$. Therefore, it is enough to describe the behavior of, for example, $W^{u/s}(E^-)$. These invariant manifolds are already well known and studied in the three and four body problems mentioned in Section~\ref{examples}, see for example, \cite{Devaney1980,Lacomba1983,SimoLacomba1982}. In the next section we will describe their behavior in detail and establish some nomenclature.

\subsection{One dimensional invariant manifolds}\label{sect:1dinv}
We start by describing the behavior of $W^u(E^-)$ (and by symmetry, we have that of $W^s(E^+)$), which is a one dimensional invariant manifold
with two branches, each one being specific solutions of the system. We will denote by $W^u_-(E^-)$ (respectively $W^u_+(E^-)$) the branch going torwards the half-space $w<0$ (resp. $w>0$).
Notice that also, by the third equation of \eqref{eq05}, the positive branch goes towards $\theta_b$, whereas the negative branch moves towards $\theta_a$.
The first important property is that $W^u(E^-) \subset {\mathcal C}$. Second, as we have seen in Section~\ref{sect:collision}, 
any given branch is an orbit that initially goes back and forth between $\Sigma_a$ and $\Sigma_b$  and then it can only exhibit two different behaviors: either 
 it tends to $E^+$ (becoming an heteroclinic connection) or the trajectory ``escapes'' towards $v\to +\infty$ along one of the upper legs of the total collision manifold.
See Figures~\ref{fig:varinvCol} and \ref{fig:varinvDegen}.

There exist three possible cases, named after Lacomba \cite{Lacomba1983}. The non-degenerate case, in which there are no heteroclinic connections on $\mathcal C$ between the equilibrium points and the branches of $W^u_{\pm}(E^-)$ go through the upper legs of $\mathcal C$ (Figure~\ref{fig:varinvCol}). In the
symmetric degenerate case, both branches of $W^{u}_{\pm}(E^{-})$ coincide with  the branches of $W^s_{\pm}(E^+)$, so there exist two heteroclinic connections between the equilibrium points (Figure~\ref{fig:varinvDegen}, center). In the (non-symmetric) degenerate case, only one branch of  $W^{u}(E^{-})$ coincides with one branch of $W^{s}(E^{+})$, while the other branch of $W^u(E^-)$ escapes along one of upper legs of $\mathcal C$ (Figure~\ref{fig:varinvDegen}, left and right).

\begin{figure}[!ht]
    \centering
    \includegraphics[trim=40 40 20 20,width=7.2cm]{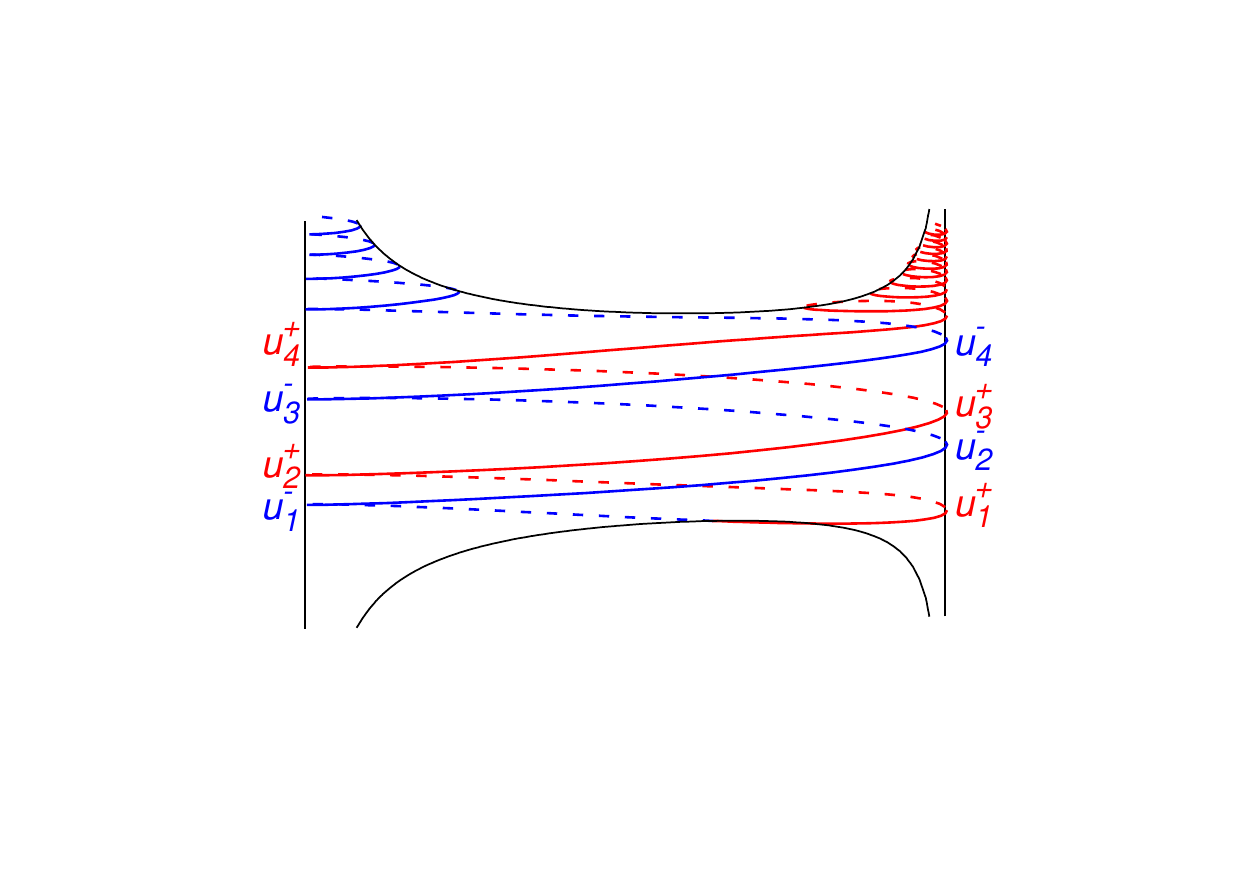}
    \includegraphics[trim=40 40 20 20,width=7.2cm]{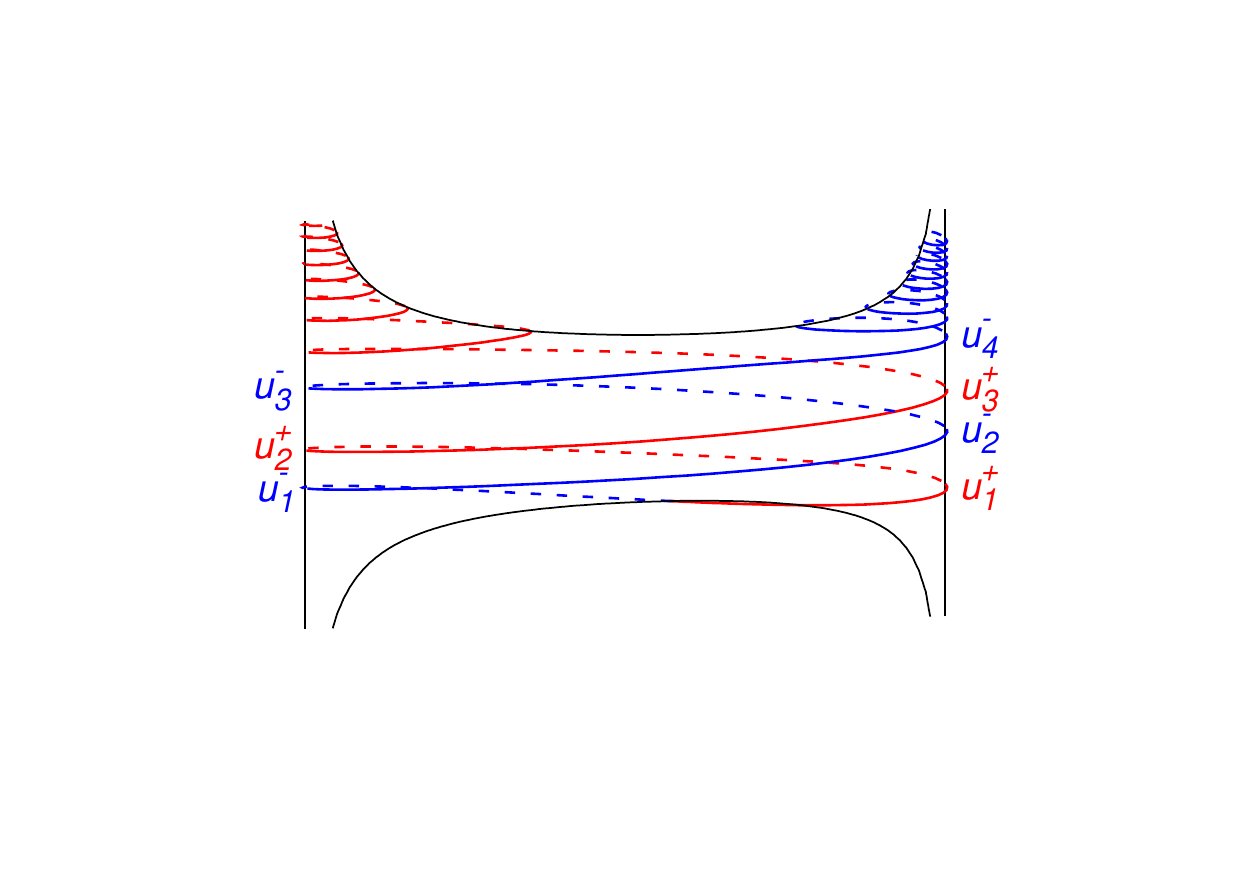}
    \\
    \includegraphics[trim=40 20 20 40,width=7.2cm]{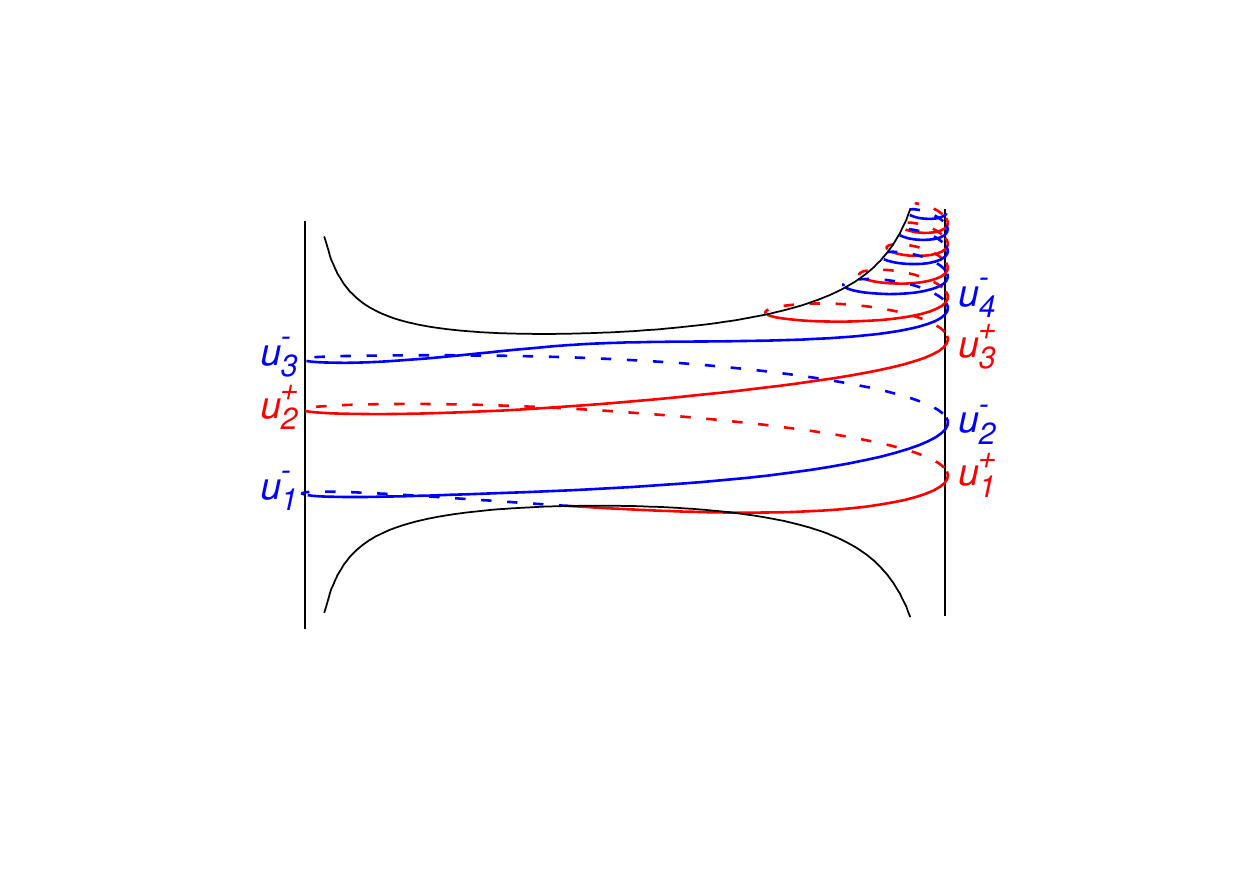}
    \includegraphics[trim=40 20 20 40,width=7.2cm]{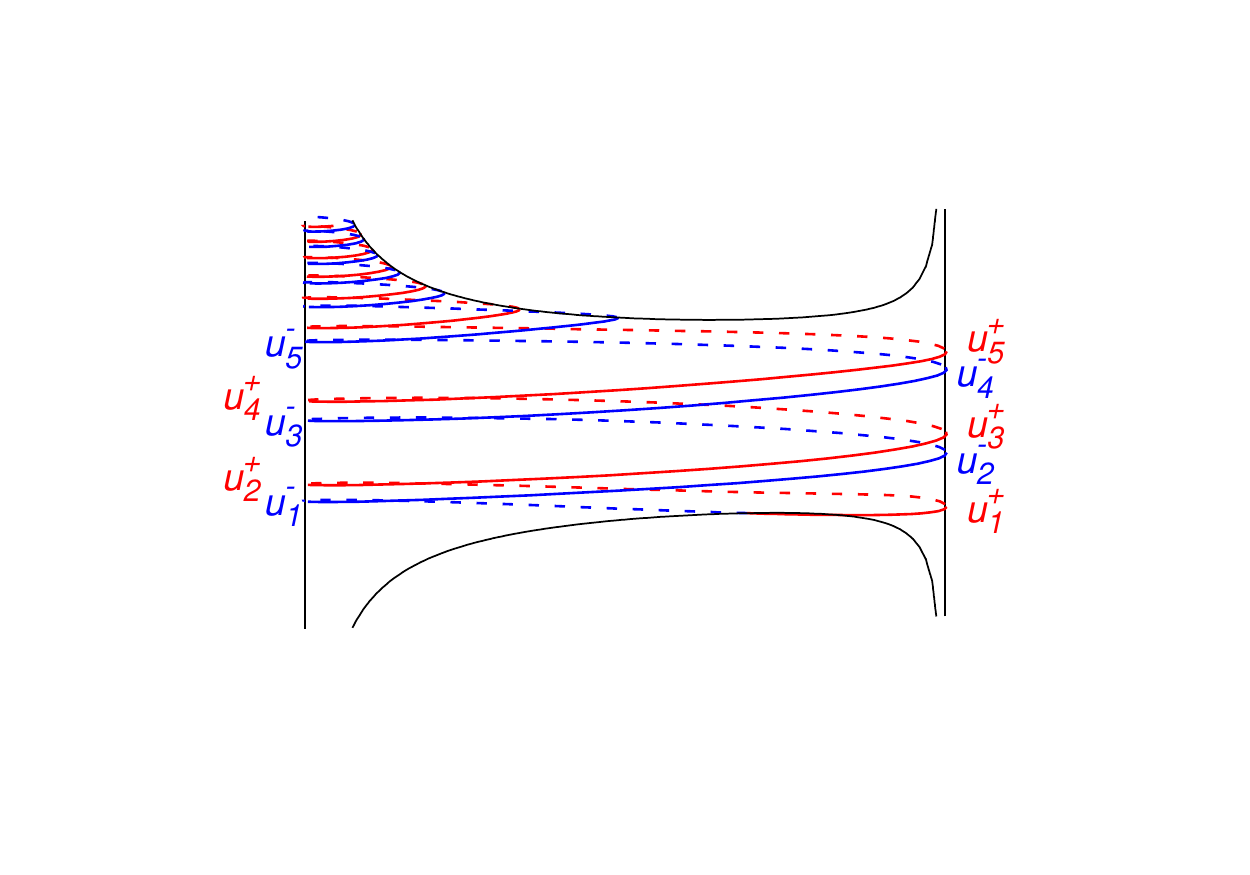}
    \caption{Branches of the invariant manifolds $W_{+}^u(E^-)$ (in red) and $W_{-}^u(E^-)$ (in blue) on the collision manifold in the $(\theta,w,v)$ space. The vertical lines correspond to $w=0, \theta=\theta_{a,b}$. Also their intersections with the section $\Sigma$ are shown. The plots show the four different scenarios in the non-degenerate cases: types I, II, III, IV (see the text and Table~\ref{tab:casos}). The four samples plotted correspond to the SC4BP for different values of the mass parameter (see Section \ref{examples}), which illustrate the behavior in our general setting.}
    \label{fig:varinvCol}
\end{figure}

\begin{figure}[!ht]
    \centering
    \includegraphics[trim=40 20 60 20,width=0.32\textwidth]{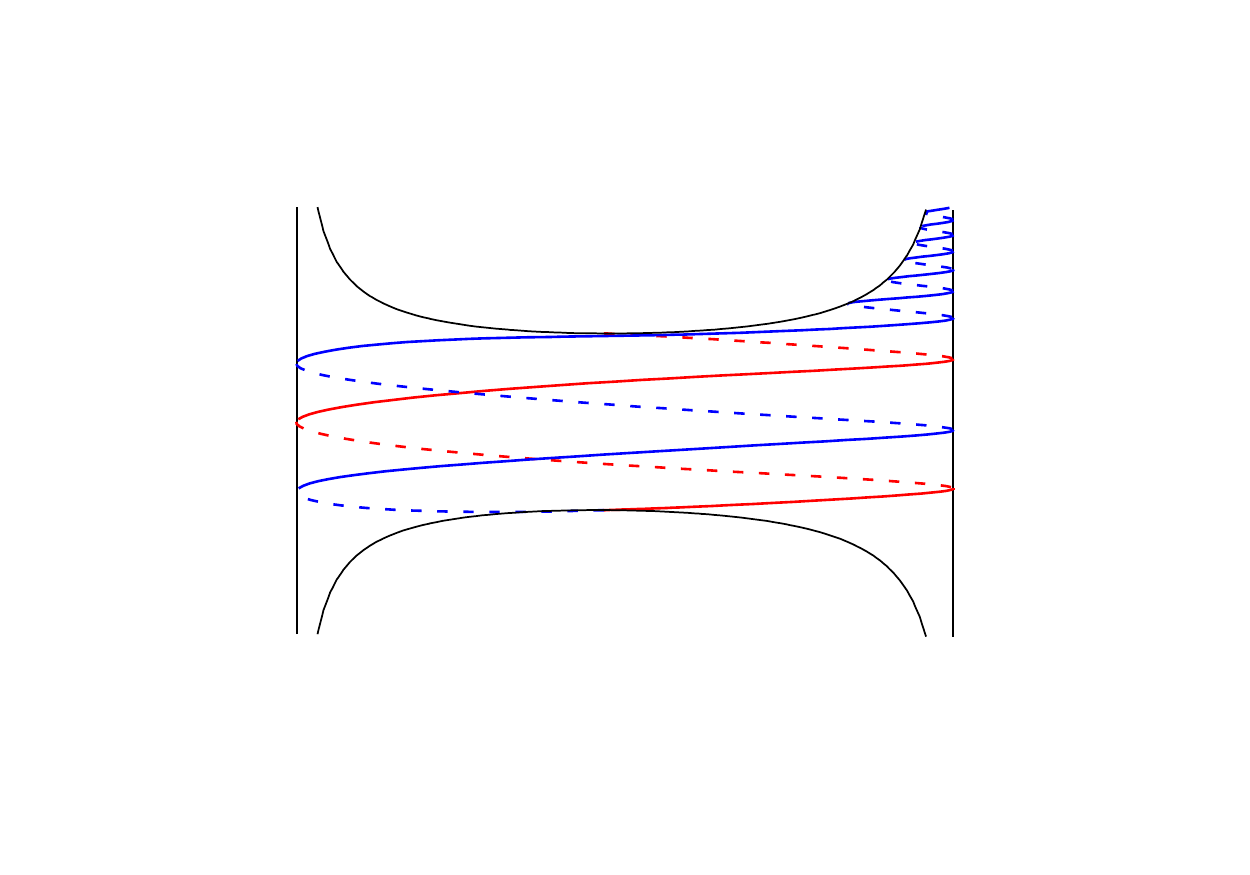}
    \includegraphics[trim=40 20 60 20,width=0.32\textwidth]{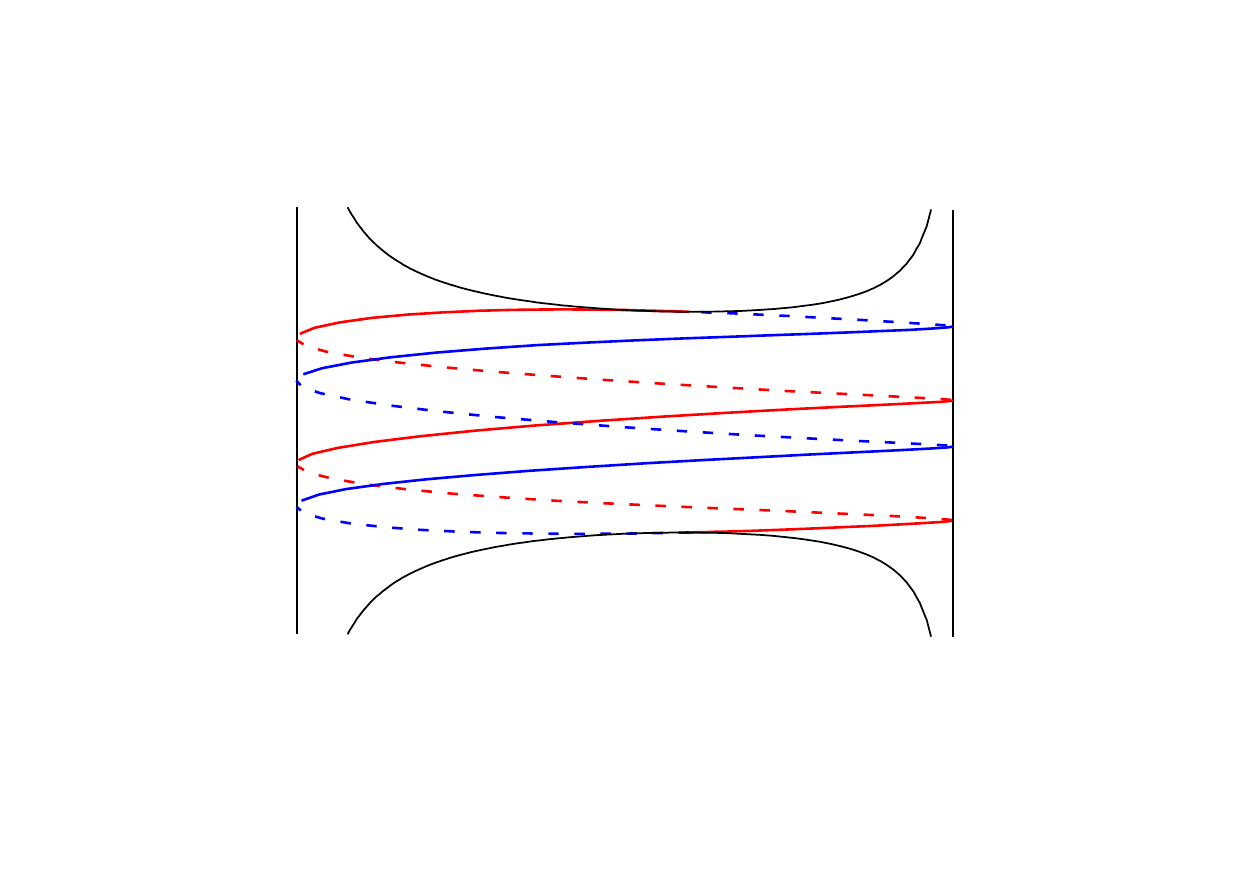}
    \includegraphics[trim=40 20 60 20,width=0.32\textwidth]{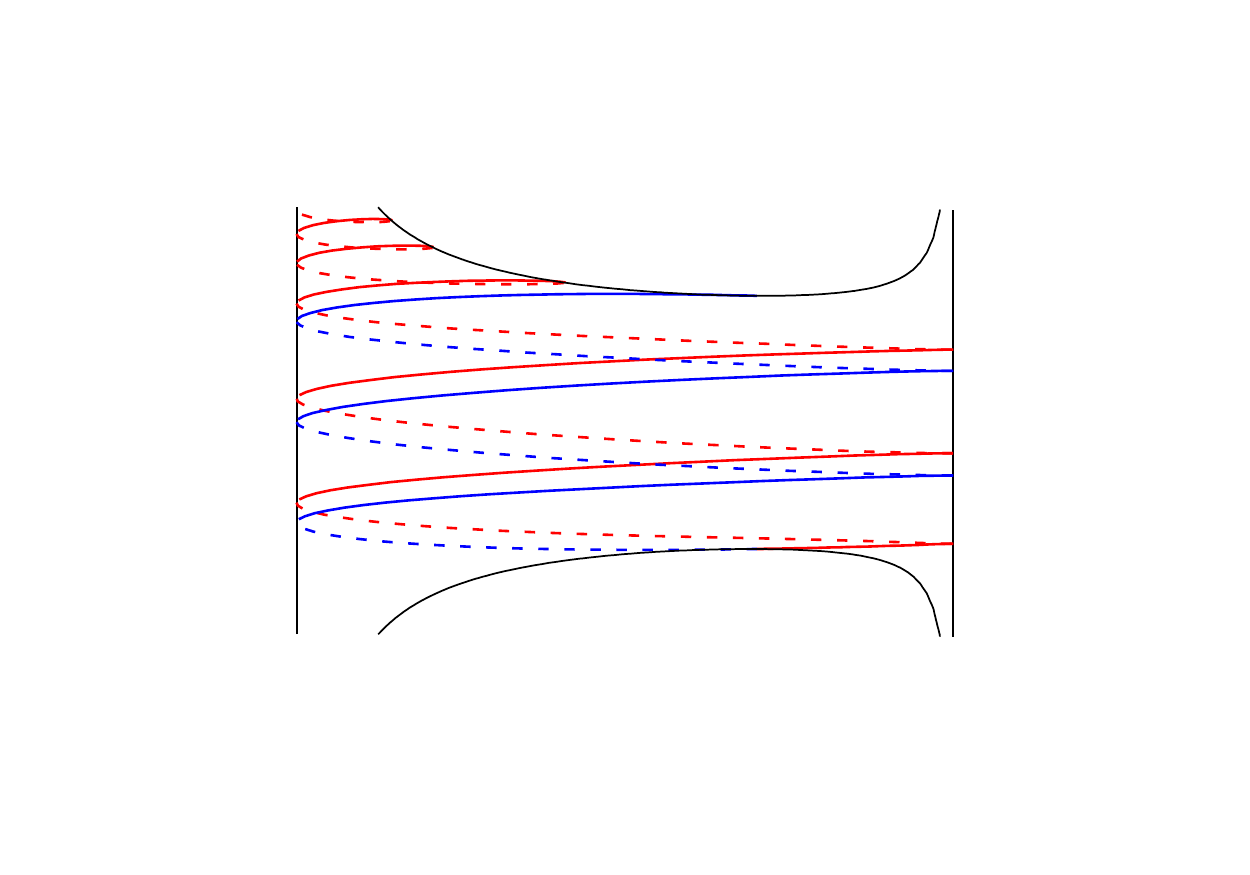}
    \caption{Branches of the invariant manifolds $W_{+}^u(E^-)$ (in red) and $W_{-}^u(E^-)$ (in blue) on the collision manifold in the $(\theta,w,v)$ space in the degenerate cases (not all of them are represented). The three samples correspond to the SC4BP for different values of the mass parameter. }
    \label{fig:varinvDegen}
\end{figure}

Observe that in the non-degenerate case, initially both branches of the invariant manifolds go through partial collisions $\theta=\theta_{a,b}$ alternatively, and then exhibit the same type of partial collisions going up along one upper arm of  
$\mathcal{C}$. In the degenerate cases, the branches corresponding to  heteroclinic connections only exhibit a finite number of partial collisions.  

In fact, the branches of the one dimensional invariant manifolds can be characterized by the number of full turns around the total collision manifold $\mathcal C$ (the number of oscillations of the variable $\theta$ between $\theta_a$ and $\theta_b$). More concretely, we say that a branch of a 1D-invariant manifold makes a full turn if the variable $\theta$ varies from $\theta_c$ to 
$\theta_c$ passing through $\theta_a$ and $\theta_b$ just once. For example, the orbits on Figure~\ref{fig:varinvCol} top left, make  
two full turns, whereas those on plot top right make one and a half turn.

The number of full turns and their intersections with the section $\Sigma$ allow us to characterize the 1D-invariant manifolds as follows. Consider the successive intersections of each branch with the  the section $\Sigma$ (see Definition~\ref{def:sect})
\begin{equation*} 
    W^u_{\pm}(E^-)\cap\Sigma=\left\{ \Ub_j^{\pm}\right\}_{j\geq 1} \qquad \text{and} \qquad 
    W^s_{\pm}(E^+)\cap\Sigma=\left\{ \Sb_j^{\pm}\right\}_{j\geq 1},
\end{equation*}
where $\Ub_j^{\pm}=(0,\theta_{a,b},u_j^{\pm},0)$, $\Sb_j^{\pm}=(0,\theta_{a,b},s_j^{\pm},0)$
and $\{u_j^{\pm}\}_{j\geq 1}$,$\{s_j^{\pm}\}_{j\geq 1}$ are increasing sequences (see Figure~\ref{fig:varinvCol}). In the non-degenerate cases the sequences are infinite, whereas in the degenerate cases some or all of them are finite. Notice that, using the symmetry of the system we have that
$$s_j^- = -u_j^+ \quad \hbox{and} \quad s_j^+ = -u_j^-. $$
For simplicity,  we will simply just denote by $u_j^{\pm}, s_j^{\pm}$  the points $\Ub_j^{\pm}$, $\Sb_j^{\pm}$, respectively.

Let $\mathcal S$ be the set of all possible sequences, just taking into account the elements $a$ and $b$.
We define
\begin{equation*}
\begin{array}{ccl}
  {\mathcal I^{+}}: W^{u}(E^{-}) & \longrightarrow & {\mathcal S}\\
      \Gamma & \longrightarrow & \sigma=(\sigma_1,\sigma_2,\dots,\sigma_n, \dots)
\end{array}
\end{equation*}
where
\[
\sigma_j=\left\{ \begin{array}{ll}
 a & \mbox{if the $j$-th intersection of $\Gamma$ with $\Sigma$ is at $\Sigma_a$}, \\
 b & \mbox{if the $j$-th intersection of $\Gamma$ with $\Sigma$ is at $\Sigma_b$},
\end{array}
 \right.
\quad \mbox{for} \, j\geq 1.
\]
The sequence ${\mathcal I}^+(\Gamma)$ codes the partial collisions (intersections with $\Sigma$) forwards in time for the unstable manifold.
Similarly, we can define ${\mathcal I}^-$ on $W^s(E^{+})$, obtaining  a sequence of partial collisions backwards in time.
Using the symmetry of the problem we have that
$${\mathcal I^+}(W^u_+(E^-)) = {\mathcal I^-}(W^s_-(E^+)) \qquad \hbox{and} \qquad
{\mathcal I^+}(W^u_-(E^-)) = {\mathcal I^-}(W^s_+(E^+)). $$

We classify the behavior of the 1-dimensional manifolds $W^u_{\pm}(E^-)$ and $W^s_{\pm}(E^+)$
using the number of full turns of each branch, their intersections with the section $\Sigma$ and the map ${\mathcal I}^+$. In what follows, the sequence $\underline{\star,\bullet},\overset{n)}{\ldots},\star,\bullet$ denotes that the sequence $\star,\bullet$ is repeated $n$ times. For example, the sequence $(\underline{b,a},\overset{n)}{\ldots},b,a,b,b,b,\ldots)$ represents an orbit with a sequence of $n$ pairs of collisions $b,a$ (a collision of type $b$ followed by a collision of type $a$) and then the orbit only has  collisions  of type $b$ forwards in time. Analogous interpretations are given for other sequences.

The  non-degenerate cases are classified in four types:
    \begin{enumerate}
        \item \textbf{Type I:} Both branches make $n$ full turns around ${\mathcal C}$ (see Figure~\ref{fig:varinvCol} top left), before escaping through different arm: 
        $${\mathcal I^+}(W^u_+(E^-))=(\underline{b,a},\overset{n)}{\ldots},b,a,b,b,b,\ldots),$$
        $${\mathcal I^+}(W^u_-(E^-))=(\underline{a,b},\overset{n)}{\ldots},a,b,a,a,a,\ldots).$$
        Then, the sequences $u_j^{\pm}$ and $s_j^{\pm}$ are ordered as follows. Along the line ${\mathcal C}\cap \Sigma_b$:
        \begin{equation*}
            \begin{split}
            \dots<s_{2n+3}^-<s_{2n+2}^-<s_{2n+1}^-<u_1^+<s_{2n}^+<u_2^-<s_{2n-1}^-<u_3^+<s_{2n-2}^+<\\
             \dots<s_3^-<u_{2n-1}^+<s_2^+<u_{2n}^-<s_1^-<u_{2n+1}^+<u_{2n+2}^+<u_{2n+3}^+<\dots
            \end{split}
        \end{equation*}
        Along the line ${\mathcal C}\cap \Sigma_a$:
        \begin{equation*}
            \begin{split}
            \dots<s_{2n+3}^+<s_{2n+2}^+<s_{2n+1}^+<u_1^-<s_{2n}^-<u_2^+<s_{2n-1}^+<u_3^-<s_{2n-2}^-<\\
             \dots<s_3^+<u_{2n-1}^-<s_2^-<u_{2n}^+<s_1^+<u_{2n+1}^-<u_{2n+2}^-<u_{2n+3}^-<\dots,
            \end{split}
        \end{equation*}
see Figure \ref{fig:Ordering}.

\begin{figure}[!ht]
    \centering
 \includegraphics[trim=40 0 40 0,width=8cm]{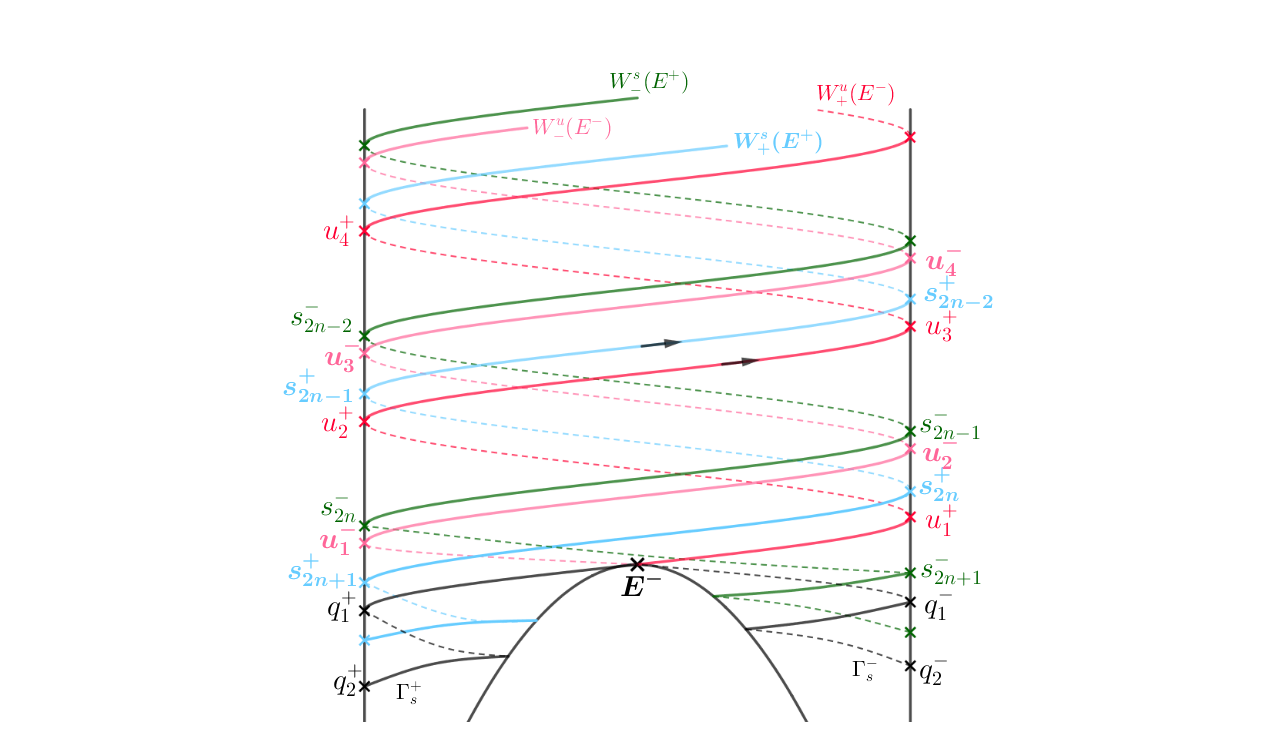}
 \includegraphics[trim=30 0 30 0,width=7cm]{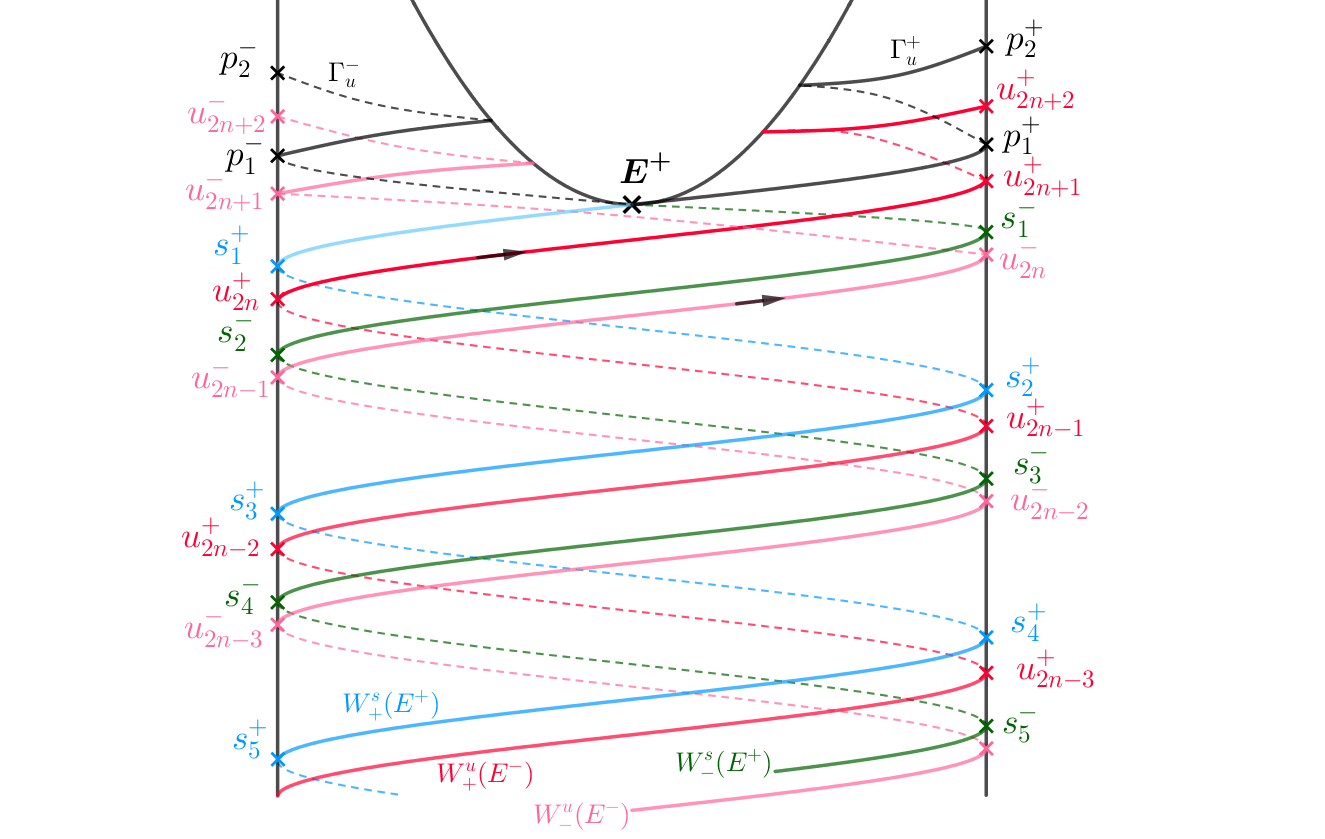}
    \caption{Ordering of sequences $\{s_{j}^{\pm}\}, \{u_{j}^{\pm}\}$ (see Section~\ref{sect:1dinv}) and $\{p_j^{\pm}\}, 
    \{q_j^{\pm}\}$ (see Section~\ref{sect:2dinv}) along $\mathcal{C}\cap\Sigma_{a,b}$ for the non-degenerate case Type I.      
    Figures left and right represent bottom and top parts of $\mathcal{C}$, respectively.}
    \label{fig:Ordering}
\end{figure}

        \item \textbf{Type II:} Both branches make $n$ full turns and a half (see Figure~\ref{fig:varinvCol} top right), before escaping through a different arm on ${\mathcal C}$:
        $${\mathcal I^+}(W^u_+(E^-))=(\underline{b,a},\overset{n+1)}{\ldots},b,a,a,a,\ldots),$$
        $${\mathcal I^+}(W^u_-(E^-))=(\underline{a,b},\overset{n+1)}{\ldots},a,b,b,b,\ldots).$$
        Then, the sequences $u_j^{\pm}$ and $s_j^{\pm}$ on ${\mathcal C}\cap \Sigma_b$ are ordered as follows:
        \begin{equation*}
            \begin{split}
            \dots<s_{2n+4}^+<s_{2n+3}^+<s_{2n+2}^+<u_1^+<s_{2n+1}^-<u_2^-<s_{2n}^+<u_3^+<s_{2n-1}^-<\\
             \dots<s_3^-<u_{2n}^-<s_2^+<u_{2n+1}^+<s_1^-<u_{2n+2}^-<u_{2n+3}^-<u_{2n+4}^-\dots.
            \end{split}
        \end{equation*}
        To obtain the ordering on ${\mathcal C}\cap \Sigma_a$, change the sign plus by minus. 

        \item \textbf{Type III:} The positive branch makes $n$ full turns whereas the negative branch makes $n$ and a half (see Figure~\ref{fig:varinvCol} bottom left), before escaping both through the right arm:
        $${\mathcal I^+}(W^u_+(E^-))=(\underline{b,a},\overset{n)}{\ldots},b,a,b,b,b,\ldots),$$
        $${\mathcal I^+}(W^u_-(E^-))=(\underline{a,b},\overset{n+1)}{\ldots},a,b,b,b,\ldots).$$
        The sequences $u_j^{\pm}$ and $s_j^{\pm}$ along ${\mathcal C}\cap \Sigma_b$ satisfy
        \begin{equation*}
            \begin{split}
            \dots<s_{2n+3}^-<s_{2n+3}^+<s_{2n+2}^-<s_{2n+2}^+<s_{2n+1}^-<u_1^+<u_2^-<s_{2n}^+< \\
            s_{2n-1}^-<u_3^+<u_4^-<\dots<s_4^+<s_3^-<u_{2n-1}^+<u_{2n}^-< \\ 
            s_2^+<s_1^-<u_{2n+1}^+<u_{2n+2}^- <u_{2n+2}^+<u_{2n+3}^-<u_{2n+3}^+<u_{2n+4}^-\dots.
            \end{split}
        \end{equation*}
        In this case, along ${\mathcal C}\cap \Sigma_a$ we have the following ordered finite sequence:
        \begin{equation*}
            \begin{split}
            u_1^-<s_{2n+1}^+<s_{2n}^-<u_2^+<u_3^-<s_{2n-1}^+<s_{2n-2}^-<\\
            \dots<u_{2n-2}^+<u_{2n-1}^-<s_3^+<s_2^-<u_{2n}^+<u_{2n+1}^-<s_1^+.
            \end{split}
        \end{equation*}

        \item \textbf{Type IV:} The positive branch makes $n$ full turns and a half whereas the negative branch makes $n$ turns (see Figure~\ref{fig:varinvCol} bottom right), before escaping both through the left arm:
        $${\mathcal I^+}(W^u_+(E^-))=(\underline{b,a},\overset{n+1)}{\ldots},b,a,a,a,\ldots),$$
        $${\mathcal I^+}(W^u_-(E^-))=(\underline{a,b},\overset{n)}{\ldots},a,b,a,a,\ldots).$$
        In this case, along ${\mathcal C}\cap \Sigma_b$ there is a finite number of intersections:
        \begin{equation*}
            \begin{split}
            u_1^+<s_{2n+1}^-<s_{2n}^+<u_2^-<u_3^+<s_{2n-1}^-<s_{2n-2}^+<\\
            \dots<u_{2n-2}^-<u_{2n-1}^+<s_3^-<s_2^+<u_{2n}^-<u_{2n+1}^+<s_1^-.
            \end{split}
        \end{equation*}
        Along ${\mathcal C}\cap \Sigma_a$ the ordering is the following:
        \begin{equation*}
            \begin{split}
            \dots<s_{2n+2}^+<s_{2n+2}^-<s_{2n+1}^+<u_1^-<u_2^+<s_{2n}^-< \\
            s_{2n-1}^+<u_3^-<u_4^+<\dots<s_4^-<s_3^+<u_{2n-1}^-<u_{2n}^+< \\ 
            s_2^-<s_1^+<u_{2n+1}^-<u_{2n+1}^+<u_{2n+2}^-<u_{2n+2}^+<u_{2n+3}^-<\dots.
            \end{split}
        \end{equation*}

    \end{enumerate}

Along the paper, we will refer to each one of the above cases. We summarize them in Table~\ref{tab:casos}.
\rc{\begin{table}[!h]
    \centering
    \renewcommand{\arraystretch}{1.5}
    \begin{tabular}{c|c|c|}
        Type & ${\mathcal I^+}(W^u_+(E^-))$ &  ${\mathcal I^+}(W^u_-(E^-))$  \\
        \hline
        I & $(\underline{b,a},\overset{n)}{\ldots},b,a,b,b,\ldots)$ & $(\underline{a,b},\overset{n)}{\ldots},a,b,a,a,\ldots)$ \\
        \hline
        II & $(\underline{b,a},\overset{n+1)}{\ldots},b,a,a,a,\ldots)$ & $(\underline{a,b},\overset{n+1)}{\ldots},a,b,b,b,\ldots)$ \\
        \hline
        III & $(\underline{b,a},\overset{n)}{\ldots},b,a,b,b,\ldots)$ &
        $(\underline{a,b},\overset{n+1)}{\ldots},a,b,b,b,\ldots)$ \\
        \hline
        IV & $(\underline{b,a},\overset{n+1)}{\ldots},b,a,a,a,\ldots)$ &
        $(\underline{a,b},\overset{n)}{\ldots},a,b,a,a,\ldots)$ \\
        \hline
    \end{tabular}
   \vskip 0.2cm 
    \caption{Codes of the partial collisions exhibited by the branches of the unstable manifold $W^u(E^-)$ (and by symmetry, the stable manifold $W^s(E^+)$) in the non-degenerate cases depending on the number of full turns ($n$ or $n$ and a half).}
    \label{tab:casos}
\end{table}
}

In the degenerate cases, there are three cases. If it is non-symmetric, one of the branches connect with the equilibrium point $E^{+}$, so its image by ${\mathcal I^+}$ is a finite sequence, whereas the other one exhibits one of the behaviors described above. In the symmetric degenerate case, both branches have associated a finite sequence in ${\mathcal S}$, see Figure~\ref{fig:varinvDegen}.

\subsection{Two dimensional invariant manifolds}\label{sect:2dinv}
Next, we describe some features of the behavior of the 2-dimensional invariant manifolds $W^u(E^+)$ and $W^s(E^-)$.
In Figure~\ref{fig:2Dman} we show a qualitative representation of such invariant manifolds. 
We observe that if $r>0$, then the projection of the motion in the space $(\theta,w,v)$ takes place {\sl inside} the collision manifold.
This can be deduced from equation (\ref{energy-reg}). For
$\theta$ fixed, the motion takes place in an ellipse in the plane $(v,w)$
with semiaxes that are maxima when $r=0$ (since $h<0$):
$\theta_a \leq \theta \leq \theta_b$ and
$$ W(\theta) w^2 + f(\theta)^2 v^2 \leq 2W(\theta)f(\theta).$$
However we plot the
 manifolds {\sl outside} $\mathcal C$ for a clearer visualization (following the first plots by Lacomba et al.)

\begin{figure}[!ht]
    \centering
    \includegraphics[width=6cm]{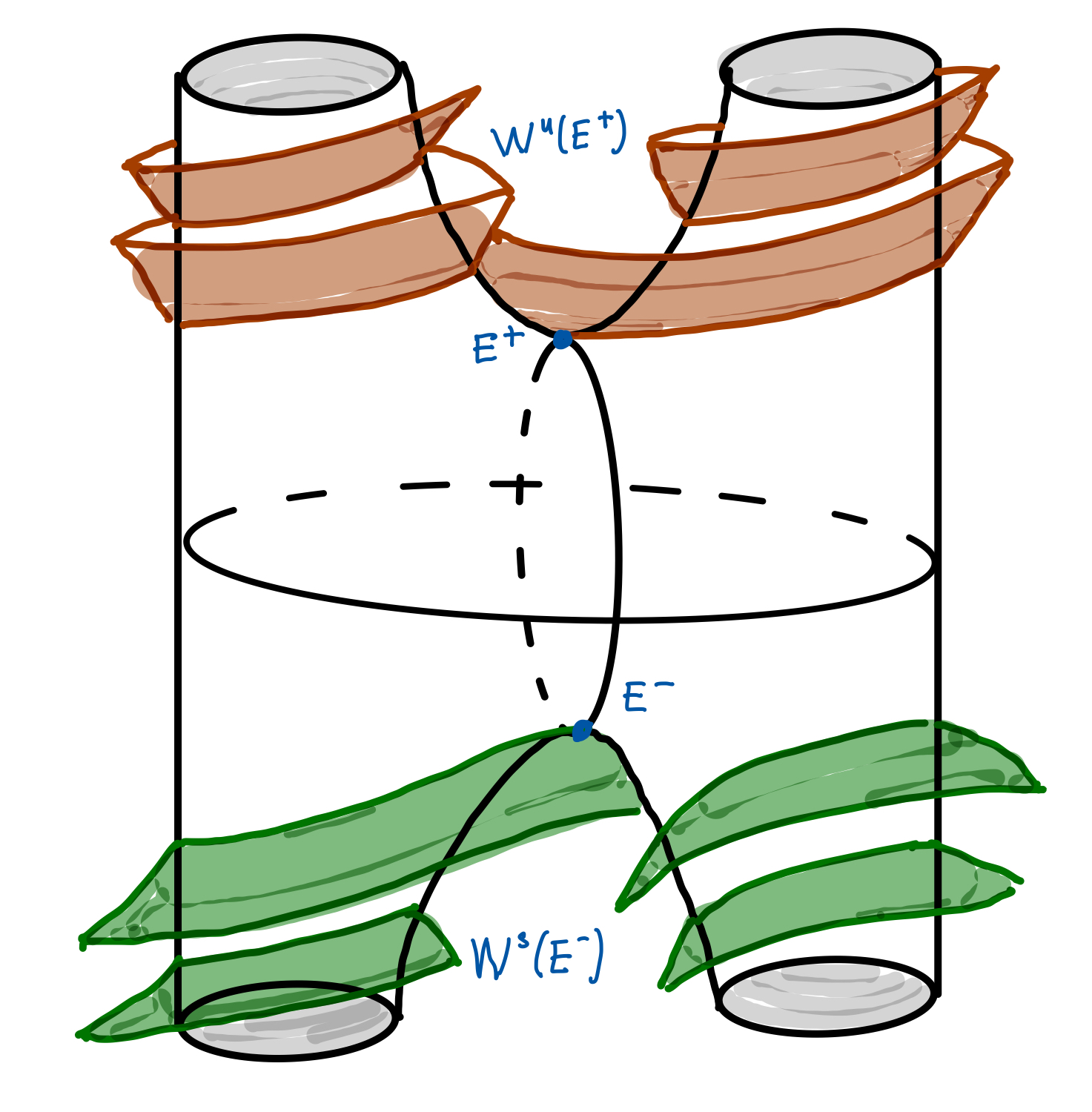}
    \caption{Qualitative behavior of the 2D-invariant manifolds $W^u(E^+)$ and
    $W^s(E^-)$.}
    \label{fig:2Dman}
\end{figure}

More concretely:
\begin{itemize}
    \item The invariant manifolds are glued to the total collision manifold $\mathcal C$, not only by the equilibrium point: the intersections are
    \begin{equation}
        W^u(E^+)\cap {\mathcal C}=\Gamma^u_{\pm} \quad \hbox{and} \quad W^s(E^-)\cap {\mathcal C}=\Gamma^s_{\pm},
 \label{eq:gammapm}
    \end{equation}
where each $\Gamma^{u}_{\pm}$ (resp. $\Gamma^{s}_{\pm}$) is an orbit that escapes forwards (respectively backwards) in the $v$-direction,  $v\to +\infty$ (resp. $v\to -\infty$) through one of the legs of $\mathcal C$ (see, for instance \cite{1999Kaplan} or \cite{1982LacombaSimo}). Here, as for the one dimensional invariant manifolds, the sign $+$ (respectively the sign $-$) means that the orbit initially moves with $w>0$ space (respespectively $w<0$) near the equilibrium point, $E^+$ when referring  to $\Gamma^{u}_{\pm}$ and  $E^-$  for $\Gamma^{s}_{\pm}$. See Figure~\ref{fig:2dinv}.

    As explained in Section~\ref{sect:collision}, each trajectory performs an infinite sequence of partial collisions:
    $$ {\mathcal I^+}(\Gamma^u_+)=(b,b,b,\ldots), \qquad
    {\mathcal I^+}(\Gamma^u_-)=(a,a,a,\ldots),
    $$
    and
    $$ {\mathcal I^-}(\Gamma^s_+)=(a,a,a,\ldots), \qquad
    {\mathcal I^-}(\Gamma^s_-)=(b,b,b,\ldots).
    $$
    In fact,
    \begin{equation*}
    \Gamma^u_+ \cap \Sigma = \{\Pb_j^{+}\}_{j\geq 1}\subset \Sigma_b,
    \quad
    \Gamma^u_- \cap \Sigma = \{\Pb_j^{-}\}_{j\geq 1}\subset \Sigma_a,
    \end{equation*}
    where $\Pb_j^{+}=(0,\theta_b,p_j^{+},0)$, $\Pb_j^{-}=(0,\theta_a,p_j^{-},0)$, and
     \begin{equation*}
     \Gamma^s_+ \cap \Sigma = \{\Qb_j^{+}\}_{j\geq 1}\subset \Sigma_a,
    \quad
    \Gamma^s_- \cap \Sigma = \{\Qb_j^{-}\}_{j\geq 1}\subset \Sigma_b,
    \end{equation*}
    with $\Qb_j^{+}=(0,\theta_b,q_j^{+},0)$, $\Qb_j^{-}=(0,\theta_a,q_j^{-},0)$.
    The sequences $\{p_j^{\pm}\}$ are increasing and $\{q_j^{\pm}\}$ are decreasing ($q_j^{\mp}=-p_j^{\pm}$). 
    See Figure~\ref{fig:2dinv}.
    
    These sequences can be combined with the sequences $\{u_j^{\pm}\}$ and $\{s_j^{\pm}\}$. In all the cases it is clear that
    \begin{equation}
    	q_1^- < u_1^+, \qquad s_1^- < p_1^+, \qquad q_1^+ < u_1^-, \qquad s_1^+ < p_1^-.
    \label{eq:sectUPQS}
    \end{equation}
    See for example Figure~\ref{fig:Ordering} for the non-degenerate case of Type I. 
    
    \item The solution $\gamma_h(s)$ given in Proposition \ref{homot} belongs to
    $W^u(E^+)\cap W^s(E^-)$, that is, it is an ECO that connects both equilibrium points without any partial collision.
    Therefore, $\gamma_h(s)$ does not intersect $\Sigma$.
    \end{itemize}

\begin{figure}[!ht]
\centering
\includegraphics[totalheight=8cm]{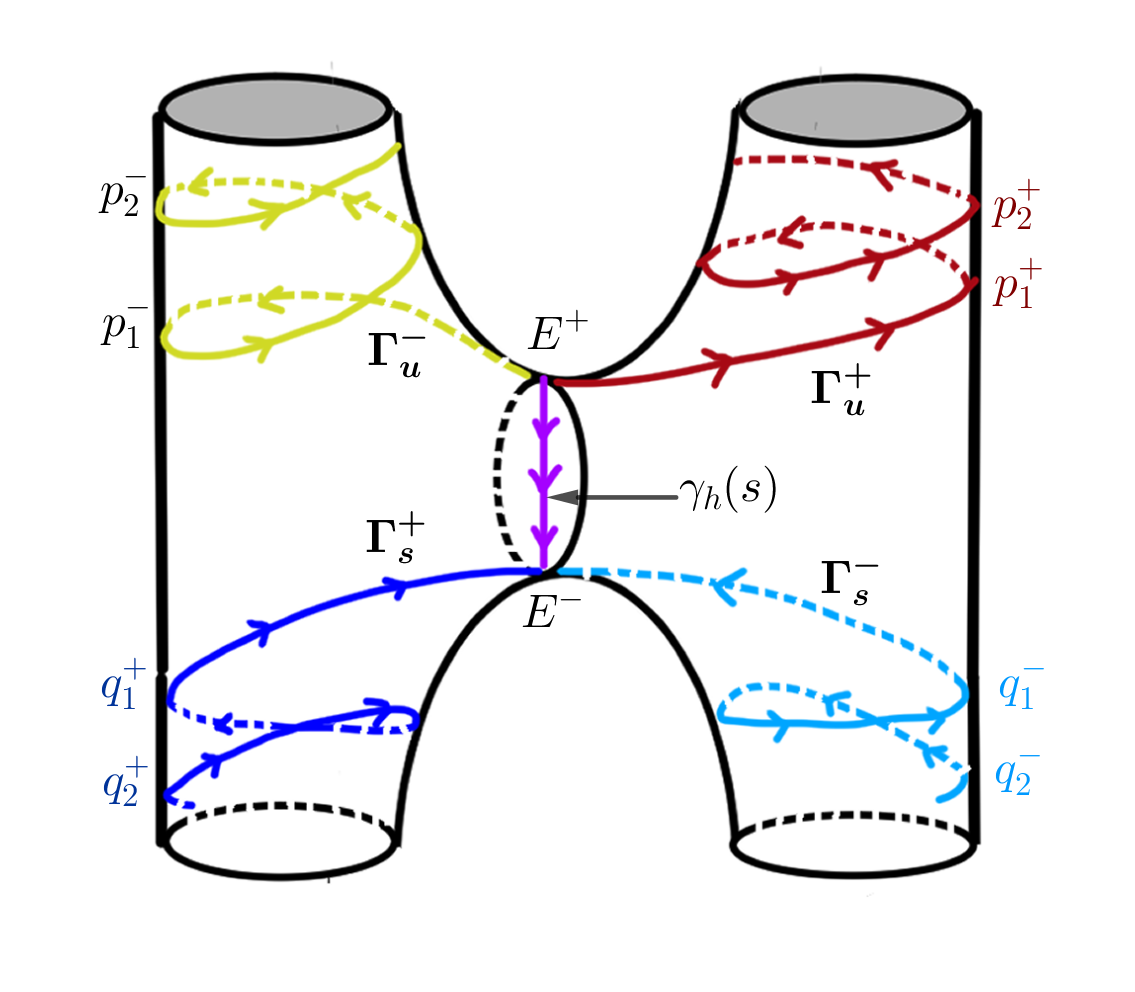}
\vspace{-0.8cm}
\caption{Behavior of $\Gamma^{u/s}_{\pm}$ and the homothetic solution $\gamma_h(s)$.}
\label{fig:2dinv}
\end{figure}

From now on,  we will simply just denote by $q_j^{\pm}, p_j^{\pm}$  the points $\Qb_j^{\pm}$,  $\Pb_j^{\pm}$, respectively.


\section{Ejection-Collision Orbits}
In this section we prove the existence of the different types of ECO  depending on the behavior of the 1-dimensional invariant manifolds. 
 Actually, we will characterize the ECO by its finite number of
successive binary collisions with $\Sigma$. Let $\sigma$ be a finite sequence of collisions of type $a$ and $b$.  Following the notation in Section \ref{sect:1dinv}, we will say that an ECO is of type $\sigma$ if its orbit describes forwards in time the finite sequence of binary collisions encoded by $\sigma$.

The following result is straightforward from the symmetry \eqref{eq:sym}.
\begin{proposition}\label{prop:ECOsym}
Let $\Gamma$ be an ECO of type $\sigma=(\sigma_1,\ldots,\sigma_m)$. Then $\overline{\Gamma}$ defined as in \eqref{eq:orbsym} is an ECO of 
type $\overline{\sigma}=(\overline{\sigma}_1,\ldots, \overline{\sigma}_m)$, with $\overline{\sigma}_k=\sigma_{m-k+1}$, $k=1,\ldots m$.
\end{proposition}

Along the proofs of the following results, we use the notation $\inte(K)$ and $\overline{K}$ for the interior and the closure of a set $K$. 

We start with a technical lemma and a general result for all the cases.  We will denote by
${W}^u(E^+)\cap \Sigma_{a,b}^1$ and $W^s(E^-)\cap \Sigma^1_{a,b}$ the first intersection (forwards and backwards in time, respectively) of the 2-dimensional manifolds with each one of the sections $\Sigma_{a,b}$.

 \begin{lemma} \label{lema1}
Each one of the intersections ${W}^u(E^+)\cap \Sigma^1_{a,b}$ and $W^s(E^-)\cap \Sigma^1_{a,b}$ is an arc contained in
$\Sigma_{a,b}$ whose closure has endpoints contained on $\mathcal C\cap\Sigma_{a,b}$. More precisely,
\begin{equation}
    \label{eq:propECO1}
    \begin{array}{rclcrcl}
        J^{b}:=\overline{{W}^u(E^+)\cap \Sigma_{b}^1} & = & \langle u_1^+,p_1^+ \rangle ,
        & \quad &
        J^{a}:=\overline{{W}^u(E^+)\cap \Sigma_{a}^1} & = & \langle u_1^-,p_1^- \rangle ,\\
        K^{b}:= \overline{{W}^s(E^-)\cap \Sigma_{b}^1} & = & \langle q_1^-,s_1^- \rangle ,
        & \quad &
        K^{a}:=\overline{{W}^s(E^-)\cap \Sigma_{a}^1} & = & \langle q_1^+,s_1^+ \rangle ,\\
    \end{array}
\end{equation}
where $\langle x,y \rangle $ denotes a closed arc contained in $\Sigma$ with endpoints $x,y$.
\end{lemma}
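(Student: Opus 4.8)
The plan is to realize $J^b$ as the image of a one-parameter family of orbits of $W^u(E^+)$ under the first crossing of $\Sigma_b$, and then to identify the two endpoints of that image as limiting orbits lying on $\mathcal C$.

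First I would fix the geometry of $W^u(E^+)$. It is a two-dimensional surface issuing from the saddle $E^+$ whose boundary inside $\mathcal C$ consists of the two orbits $\Gamma^u_{\pm}$ of \eqref{eq:gammapm}, and which contains in its interior the homothetic orbit $\gamma_h$ of Proposition~\ref{homot}. Since $\gamma_h$ leaves $E^+$ along the pure $r$-direction (the remaining unstable eigendirection being tangent to $\mathcal C$ with a $w$-component), $\gamma_h$ separates the orbits that leave $E^+$ with $w>0$, which tend to $\theta_b$, from those that leave with $w<0$, which tend to $\theta_a$. I would then choose a fundamental arc $\ell\subset W^u(E^+)$ meeting every orbit exactly once, parametrized by $\xi\in[\xi_-,\xi_+]$ with $\ell(\xi_{\pm})\in\Gamma^u_{\pm}$ and $\ell(\xi_0)\in\gamma_h$, and restrict attention to the sub-family $\xi\in(\xi_0,\xi_+]$ corresponding to the $w>0$ side. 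The case of $J^a$ is handled identically using $(\xi_-,\xi_0)$, and the two statements for $K^{a,b}$ are deduced by symmetry at the very end.

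Next I would show that every orbit with $\xi\in(\xi_0,\xi_+]$ meets $\Sigma_b$, and does so as its first crossing of $\Sigma$. Immediately after leaving $E^+$ such an orbit satisfies $\theta>\theta_c$ and $w>0$, so Proposition~\ref{prop:crossings} applies and, as in its proof, $\theta(s)$ increases monotonically up to $\theta_b$ with no interior extremum; hence $\Sigma_b$ is reached and is the first component of $\Sigma$ encountered. At the crossing $\theta=\theta_b$, $w=0$, and by the computation in the proof of Proposition~\ref{prop:crossings} one has $dw/ds=-\sin(\theta_b-\theta_a)\neq0$. Thus the first-crossing time is defined implicitly by $w(\xi,s)=0$ with $\partial_s w\neq0$, and the implicit function theorem produces a continuous first-crossing point $g(\xi):=(r,v)(\xi,s_b(\xi))\in\Sigma_b$. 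The map $g$ is injective, because two orbits sharing the point $g(\xi)$ would coincide and hence meet $\ell$ at the same parameter; therefore $W^u(E^+)\cap\Sigma_b^1=g\big((\xi_0,\xi_+]\big)$ is an arc contained in $\Sigma_b$.

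Then I would compute the two endpoints. For $\xi=\xi_+$ the orbit is $\Gamma^u_+\subset\mathcal C$, whose first intersection with $\Sigma_b$ is $\Pb_1^+=(0,\theta_b,p_1^+,0)$, so $g(\xi_+)=p_1^+$. As $\xi\to\xi_0^+$ the initial $w$-component tends to $0$, so the orbit shadows $\gamma_h$ into an arbitrarily small neighborhood of $E^-$ before peeling off; arriving there carrying $w>0$, it is ejected along the branch $W^u_+(E^-)\subset\mathcal C$, and the inclination (or $\lambda$-) lemma at the saddle $E^-$ gives $C^0$-convergence of the orbit to $W^u_+(E^-)$ up to its first crossing of $\Sigma_b$. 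Hence $g(\xi)\to\Ub_1^+=(0,\theta_b,u_1^+,0)$, i.e. $g(\xi)\to u_1^+$. Both $p_1^+$ and $u_1^+$ lie on $\mathcal C\cap\Sigma_b$ (they have $r=0$), so $\overline{W^u(E^+)\cap\Sigma_b^1}=\langle u_1^+,p_1^+\rangle=J^b$. The identity for $J^a$ is proved verbatim from the sub-family $(\xi_-,\xi_0)$ on the $w<0$ side, with $\Gamma^u_-$ and $W^u_-(E^-)$ replacing $\Gamma^u_+$ and $W^u_+(E^-)$ and giving endpoints $p_1^-$ and $u_1^-$; and the two identities for $W^s(E^-)$ follow from the symmetry $\mathcal S$ of \eqref{eq:sym}, which fixes each $\Sigma_{a,b}$, carries $W^u(E^+)$ to $W^s(E^-)$ and sends $v\mapsto-v$, so that $J^b$ and $J^a$ map to $\langle s_1^-,q_1^-\rangle=K^b$ and $\langle s_1^+,q_1^+\rangle=K^a$ via $s_1^{\mp}=-u_1^{\pm}$, $q_1^{\mp}=-p_1^{\pm}$. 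I expect the one genuinely delicate step to be the limit $\xi\to\xi_0^+$: one must justify rigorously, through the inclination lemma at $E^-$, both that the shadowing orbit is ejected along the correct branch $W^u_+(E^-)$ and that its first return to $\Sigma_b$ converges to that of $W^u_+(E^-)$; the remaining ingredients are the monotonicity of $\theta$ supplied by Proposition~\ref{prop:crossings} together with standard continuous dependence on initial conditions.
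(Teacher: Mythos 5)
Your proposal is correct and follows essentially the same route as the paper's proof: parametrize a fundamental arc in $W^u(E^+)$ near $E^+$ with endpoints on $\Gamma^u_{\pm}$ and midpoint on $\gamma_h$, follow the first crossing with $\Sigma_b$ continuously, identify the endpoint $p_1^+$ from $\Gamma^u_+$ and the endpoint $u_1^+$ from the close passage to $E^-$ along $W^u_+(E^-)$, and conclude for $K^{a,b}$ by the symmetry $\mathcal S$. You simply make explicit some steps the paper leaves implicit (transversality of the crossing via $dw/ds\neq 0$, injectivity, and the inclination lemma at $E^-$), which you rightly flag as the delicate point.
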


\begin{proof}
We detail the proof for ${W}^u(E^+)\cap \Sigma_{b}^1$. The intersection with $\Sigma_a$ follows similar arguments, and the intersections of the stable manifold of $E^-$ are obtained using the symmetry of the problem. Let $\Phi_s$ be the flow of system~\eqref{eq05}.

Consider an arc of initial conditions contained in $W^u(E^+)$ and  close enough to $E^+$, so that the arc is homeomorphic to a semicircle parametrized
by an angle $\phi\in[0,\pi]$, in such a way that $\phi =0,\pi,$ correspond
to points $\Zb^+$ in $\Gamma^u_+$ and $\Zb_-$ in $\Gamma^u_-$ respectively (recall \eqref{eq:gammapm}), 
and $\phi=\frac{\pi}{2}$ corresponds to a point
$\Zb^h$ in the homothetic orbit $\gamma_h$. See Figure 
\ref{fig:DemoECO2a}.

Clearly, $\Phi_s(\Zb^+)$ intersects $\Sigma_b$ at $p_1^+$. Therefore, by continuity, the flow transforms the subarc parameterized by $(0, \pi/2)$ into a continuous arc contained in $\Sigma_b$.
Moreover, for any $\Zb$ in this subarc close to $\Zb^h$, the trajectory
$\Phi_s(\Zb)$ has a close passage to $E^-$. Due to the hyperbolic character of the equilibrium point, the orbit will continue close to the unstable branch of $E^-$, whose intersection with $\Sigma_b$ is $u_1^+$.
Therefore, ${W}^u(E^+)\cap \Sigma_{b}^1$ is an arc with end points $p_1^+$ and $u_1^+$.

In a similar way, the image of the subarc parametrized by $(\pi/2,\pi)$ is also a continuous arc, contained in $\Sigma_a$, with endpoints $u_1^-$ and $p_1^-$.
\end{proof}

In Figure~\ref{fig:DemoECO2a}, we show the idea of the proof of the Lemma~\ref{lema1}. The projection of the collision manifold in the $(\theta,v)$ plane and the semiplane $(r,v)$, $r\geq 0$ (that contain the projection of the arches) are depicted jointly glued to the section $\Sigma_b$. From now on, the pictures will follow this representation.
\begin{figure}[!ht]
\centering
\includegraphics{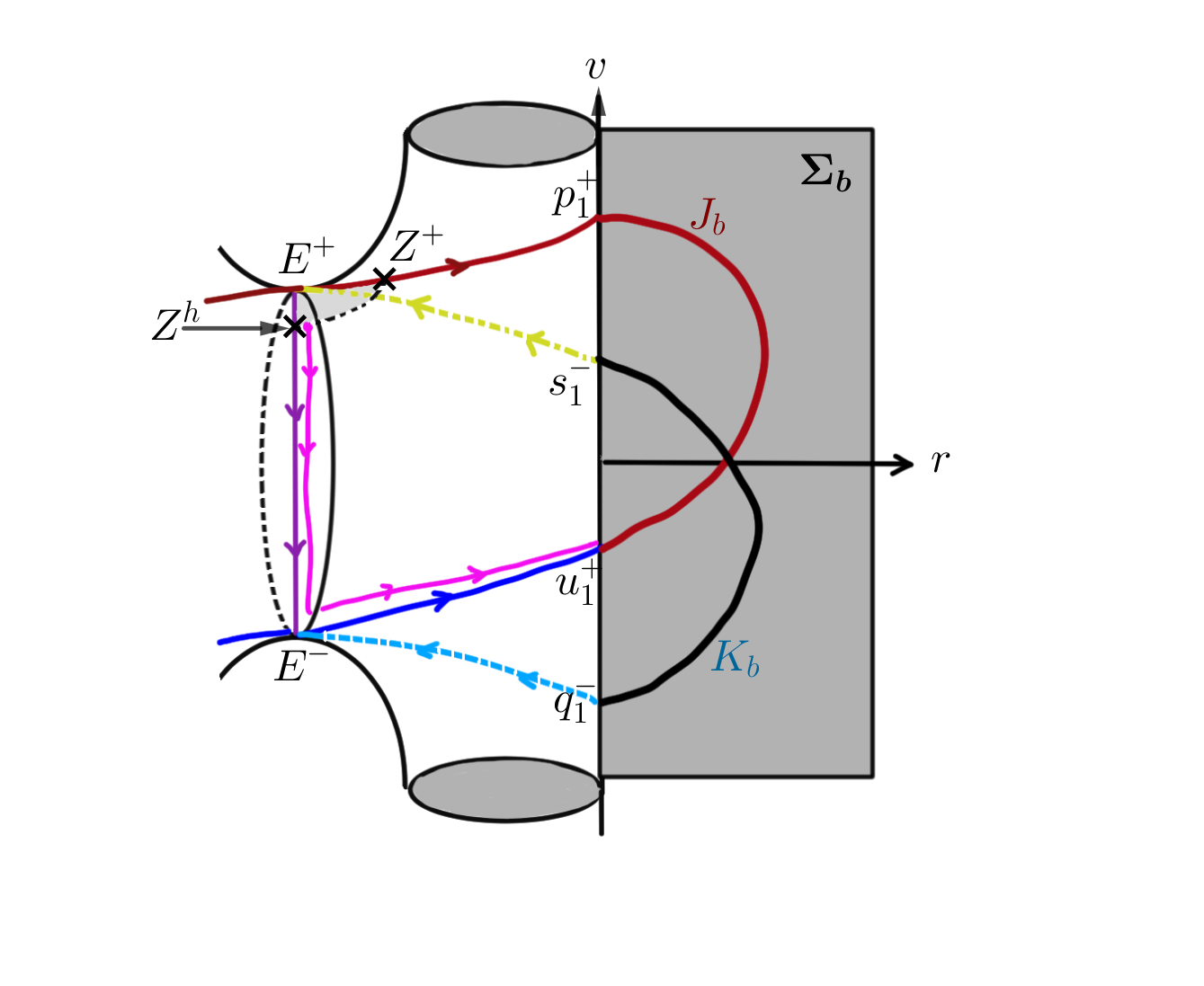}
\vspace{-1.2cm}
\caption{Sketch of the proof of Lemma~\ref{lema1}. The collision manifold and the semiplane $(r,v)$, $r\geq 0$ (that contains the arches) are depicted jointly glued to the section $\Sigma_b$. }
\label{fig:DemoECO2a}
\end{figure}

\begin{theorem}\label{teo1}
For any natural number $m\geq 1$, the system \eqref{eq05} has an  
ejection-collision orbit of type
$$
(\underline{a},\overset{m)}{\ldots},a),
\quad \hbox{and} \quad
(\underline{b},\overset{m)}{\ldots},b).
$$
\end{theorem}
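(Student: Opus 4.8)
The plan is to realize every such orbit inside the two–dimensional manifold $W^u(E^+)$, by a continuation argument along the first–collision arc $J^a$ (resp.\ $J^b$) of Lemma~\ref{lema1}. The two boundary orbits of $J^a$ play the role of anchors: at the endpoint $p_1^-$ sits $\Gamma^u_-$, whose collision code is $(a,a,a,\dots)$, and at the endpoint $u_1^-$ sits $W^u_-(E^-)$, whose code begins $(a,b,\dots)$ in all cases (Types I--IV and the degenerate ones). The orbits of type $(\underline{b},\overset{m)}{\dots},b)$ are obtained by the mirror construction on $J^b$, anchored by $\Gamma^u_+$ (code $(b,b,\dots)$) and $W^u_+(E^-)$ (code $(b,a,\dots)$); since this is identical after the exchange $a\leftrightarrow b$, I describe only the type $a$ case.

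I would parametrize $J^a$ by $\phi\in[\pi/2,\pi]$ as in the proof of Lemma~\ref{lema1}, so that $\phi=\pi$ is $\Gamma^u_-$, $\phi=\pi/2$ the homothetic limit, and for $\phi$ near $\pi/2$ the orbit passes close to $E^-$ and then shadows $W^u_-(E^-)$. By induction on $m$ I would prove that the parameters whose first $m$ collisions are all of type $a$ form a sub-arc with outer endpoint $\Gamma^u_-$, and that the closure of its $m$-th intersection with $\Sigma$ is an arc $\langle u_1^-,p_m^-\rangle\subset\Sigma_a$: the endpoint $p_m^-$ lies on $\Gamma^u_-$, while the inner endpoint is the limit of orbits that, after $m-1$ collisions of type $a$, fall close to the saddle $E^-$ and shadow $W^u_-(E^-)$, so that their $m$-th collision approaches the first collision $u_1^-$ of $W^u_-(E^-)$. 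Applying the return map $\mathcal P$ once more, the image of the endpoint $p_m^-$ is $p_{m+1}^-\in\Sigma_a$ (a type-$a$ collision), whereas the image of the endpoint $u_1^-$ is $u_2^-\in\Sigma_b$ (a type-$b$ collision), because the second collision of $W^u_-(E^-)$ is of type $b$.

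Now the arc $\langle u_1^-,p_m^-\rangle$ is connected and $\Sigma_a\cap\Sigma_b=\emptyset$, so $\mathcal P$ cannot be continuous and everywhere defined on it; hence there is a parameter $\phi_m$ whose orbit performs exactly $m$ collisions, all of type $a$, and then never returns to $\Sigma$. Such an orbit is bounded and, by Proposition~\ref{prop:crossings}, cannot keep $\theta$ oscillating without meeting $\Sigma$; therefore it must tend to an equilibrium. It cannot tend to $E^+$, because $W^s(E^+)\subset\mathcal C$ lies in $\{r=0\}$ while the orbit has $r>0$; hence it tends to $E^-$. Lying in $W^u(E^+)\cap W^s(E^-)$ with exactly $m$ intersections with $\Sigma$, all on $\Sigma_a$, it is an ejection–collision orbit of type $(\underline{a},\overset{m)}{\dots},a)$. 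Using $\phi_m$ as the new inner endpoint closes the induction and produces one such orbit for every $m\ge1$.

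The step I expect to be the main obstacle is the shadowing used to locate the inner endpoint $u_1^-$ and, above all, to force the type change from $a$ to $b$: this is an inclination ($\lambda$-)lemma at the saddle $E^-$, asserting that orbits that enter a neighbourhood of $E^-$ along $W^s(E^-)$ leave it tracking $W^u_{\pm}(E^-)$, so that their collision code coincides with that of $W^u_-(E^-)$ for as many initial symbols as needed. The second delicate point is the dichotomy at $\phi_m$: one must rule out escape to infinity (using $h<0$ and the energy relation~\eqref{energy-reg}, which bounds $v$ away from the collisions) and convergence to $E^+$, so that ``failure to return to $\Sigma$'' is genuinely realized as convergence to $E^-$ and not as a tangency or other degeneracy of $\mathcal P$.
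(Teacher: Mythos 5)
Your route is genuinely different from the paper's, and it hinges on one step that you have not actually supplied. The paper never argues that ``an orbit which fails to return to $\Sigma$ must be a collision orbit.'' Instead it detects ECOs as literal intersection points of the traces of the \emph{two} 2D manifolds on $\Sigma$: it takes $J^b=\langle u_1^+,p_1^+\rangle\subset W^u(E^+)\cap\Sigma_b$ and $K^b=\langle q_1^-,s_1^-\rangle\subset W^s(E^-)\cap\Sigma_b$ from Lemma~\ref{lema1}, observes that the interleaving $q_1^-<u_1^+<s_1^-<p_1^+$ forces $J^b\cap K^b\neq\emptyset$, and then pulls back a subarc $K_m^b$ of the stable trace by $\mathcal{P}^{-1}$ (its endpoints sliding along $\Gamma^s_-$ to $q_{m+1}^-$ and, via the close passage to $E^+$, to $s_1^-$), so that each pullback again straddles $J^b$. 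Every intersection point lies in $W^u(E^+)\cap W^s(E^-)$ by construction and is therefore an ECO with no further argument. Your scheme instead pushes forward only $W^u(E^+)$ and manufactures the ECO as a parameter $\phi_m$ at which the return map is undefined; you then must \emph{prove} that the corresponding orbit tends to $E^-$.

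That is the gap. Proposition~\ref{prop:crossings} only tells you that, for all later times, neither ($\theta>\theta_c$, $w>0$) nor ($\theta<\theta_c$, $w<0$) occurs; from this you still have to show the $\omega$-limit set is exactly $\{E^-\}$ and not, say, the closure of the homothetic segment $\{\theta=\theta_c,\ w=0\}$ of Proposition~\ref{homot}, which is the full invariant set compatible with those sign constraints. The gradient-like property $dv/ds\ge 0$ is available only on $\mathcal{C}$, so you cannot use monotonicity of $v$ for your $r>0$ orbit; ruling out convergence to $E^+$ via $W^s(E^+)\subset\mathcal{C}$ is fine, but excluding asymptotic approach to $\gamma_h$ without convergence to $E^-$ requires a separate argument (e.g.\ that any orbit entering a neighbourhood of $E^-$ off $W^s(E^-)$ exits shadowing $W^u(E^-)\subset\mathcal{C}$ and is then forced back to $\Sigma$ by Proposition~\ref{prop:crossings}). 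A secondary weak point: your type-change mechanism needs the second symbol of $W^u_-(E^-)$ to be $b$; this holds in all four non-degenerate types, but in a degenerate case where that branch is a heteroclinic connection with code $(a)$ the endpoint $u_2^-$ does not exist and the ``connected arc mapped into two disjoint half-planes'' dichotomy is no longer automatic, whereas the paper's proof uses only \eqref{eq:sectUPQS}, which holds in all cases. If you supply the omega-limit lemma (and patch the degenerate edge case), your argument goes through and is an attractive alternative, since it uses only the forward dynamics of $W^u(E^+)$; but as written the central step is asserted rather than proved.
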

\begin{proof}
From Lemma~\ref{lema1}, recall that $J^{a,b}$ and $K^{a,b}$ are the four arcs that correspond to the first intersection of the unstable ${W}^u(E^+)$ and stable $W^s(E^-)$ manifolds, respectively, with $\Sigma_{a,b}$. We give a proof of the existence  of ECOs with only collisions of type $b$. The result for the other type of ECOs follows by repeating the same arguments considering the other branches of the invariant manifolds and section $\Sigma_a$.

For $n=1$, from Lemma~\ref{lema1} and the ordering \eqref{eq:sectUPQS}, it follows that $J^b\cap K^b \neq \emptyset$.  Let us denote by $E_1 \in J^b\cap K^b$  the point such that the arc $K^b_1:=\langle q_1^-,E_1 \rangle  \subset K^b$ does not intersect $J^b$ except at $E_1$. Clearly $\Phi_s(E_1)\rightarrow E^{\pm}$ for $s\to \mp \infty$ with no other partial collisions. Thus, it is an ECO of type $(b)$,  see Figure \ref{fig:DemoECO2a}.
\begin{figure}[!ht]
    \centering
    \includegraphics[width=8cm]{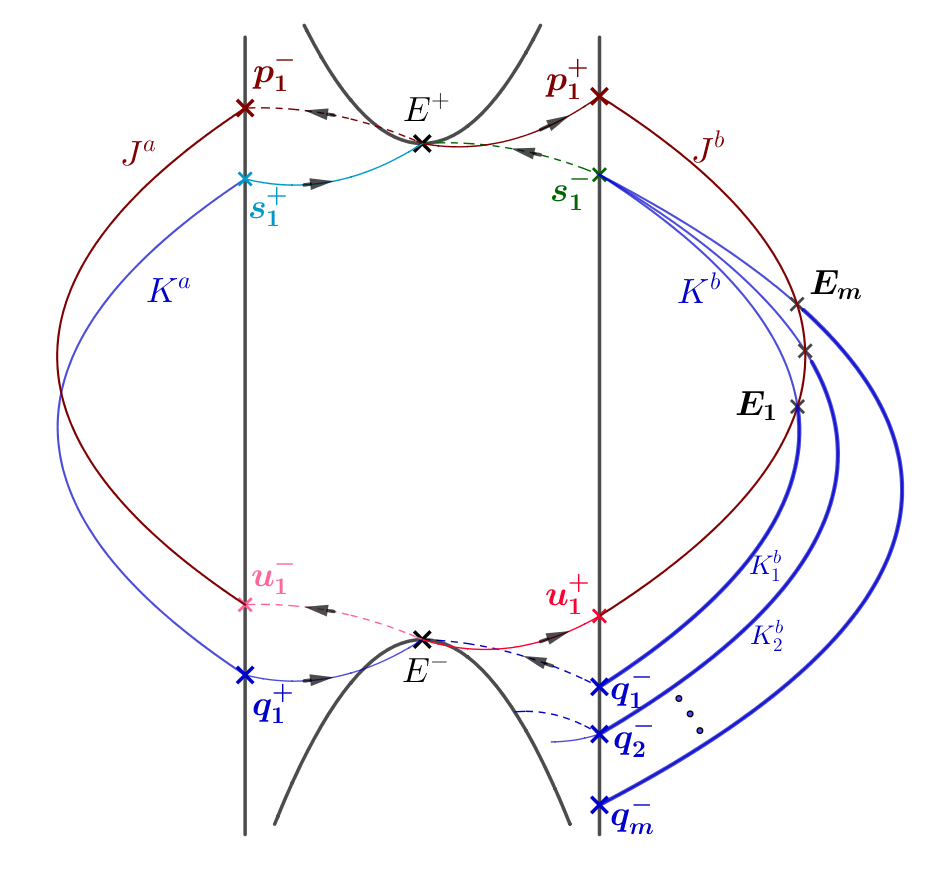}
    \caption{Schematic idea of the proof of Theorem~\ref{teo1}.}
    \label{fig:Thm1}
\end{figure}

To prove the claim for any $m>1$, we will follow the stable manifold $W^s(E^-)$ backwards in time to look for intersections with $J^b$. Recall that ${\mathcal P}$ is the Poincar\'e map (see \eqref{PMap}). Notice that ${\mathcal P}$ and ${\mathcal P}^{-1}$ are defined for any point on $\Sigma$ that does not correspond to an ECO. 

We claim that ${\mathcal P}^{-1}(\inte(K^b_1)) $ is an arc contained in  $\Sigma_b$ whose closure is the arc 
$\langle q_2^-,s_1^- \rangle $, that is,
$\overline{{\mathcal P}^{-1}(\inte(K^b_1))}=\langle q_2^-,s_1^- \rangle $. On the one hand, it is clear that ${\mathcal P}^{-1}(q_1^-)=q_2^-$. Consider $Z \in
K^b_1$ close to $E_1$, and its orbit $\Phi_s(Z)$. Applying the same argument as in Lemma~\ref{lema1}, the trajectory in backwards time will have a close passage to the equilibrium point $E^+$ and then will follow the branch  $W^s_-(E^+)$. Therefore,
$$ \lim_{\underset{Z\in K^b_1}{Z \to E_1}} {\mathcal P}^{-1}(Z) = s_1^-.$$
Using the ordering of the points along $\Sigma_b$ (see Section \ref{sect:1dinv} and Figure~\ref{fig:Ordering}), ${\mathcal P}^{-1}(\inte(K^b_1))$ intersects $J^b$ at a point that corresponds to an ECO of type $(b,b)$.

Now consider $E_2\in {\mathcal P}^{-1}(\inte(K^b_1))\cap J^b$ such that the arc 
$K^b_2:=\langle q_2^-,E_2 \rangle  \subset \langle q_2^- , s_1^-\rangle $ does not intersect $J^b$ except at $E_2$. Using the same argument as before, $\overline{{\mathcal P}^{-1}(\inte(K^b_2))} = \langle q_3^-,s_1^- \rangle\subset  \Sigma_b$, which intersects $J^b$. Therefore, there exists an ECO of type $(b,b,b)$.

By induction, $\overline{{\mathcal P}^{-1}(\inte(K^b_{m-1}))}=\langle q_m^-, s_1^- \rangle$ and 
there exists $E_m\in {\mathcal P}^{-1}(\inte(K^b_{n-1}))\cap J^b$
such that the arc $K^b_m:=\langle q_m^-,E_m \rangle  \subset \langle q_m^-, s_1^- \rangle$  does not intersect $J^b$ except at point $E_m$. 
Therefore, for each $m$, there exists a point $E_m$ on $\Sigma_b$ that corresponds to an ECO of type $(b,b,\overset{m)}{\ldots},b)$,  with $m$ partial collisions of type b.
\end{proof}

We notice that the results and proofs of Lemma~\ref{lema1} and Theorem~\ref{teo1} follow the same arguments as in the case of the C3BP (\cite{1999Kaplan}) or in the SC4BP (\cite{2004LacombaMedina}). We remark that we are analyzing a more general setting. 

\subsection{Non-degenerate cases}
Next, we will prove the existence of ECO  exhibiting different number and type of partial collisions. The results will depend on the behavior of the 1D-invariant manifolds contained in the collision manifold $\mathcal C$, classified in types I, II, III and IV in Section~\ref{sect:1dinv}, and the orderings explained in that section.

\begin{theorem}\label{teo2}
Suppose that $W^u_{\pm}(E^-)$ are of type I, and let $n\geq 1$ be the number of full turns
performed by the branches of the 1D-invariant manifolds before escaping through different arms of ${\mathcal C}$.  
Then:
\begin{enumerate}[(a)]
\item There exist ejection-collision orbits exhibiting $2n+1$ collisions of types
  \begin{center}
    $(\underline{b,a},\overset{n)}{\ldots},b,a,b)$ \hspace{1cm} and \hspace{1cm}
    $(\underline{a,b},\overset{n)}{\ldots},a,b,a).$
  \end{center}

 \item There exist ejection-collision orbits exhibiting any sequence that can be obtained by the following graph:
\begin{center}
\begin{tikzpicture}
    \node at (0,2) (B) {$(\underline{b,a},\overset{n)}{\ldots},b,a,b)$};
    \node at (3,0) (A) {$(\underline{a,b},\overset{n)}{\ldots},a,b,a)$};
    \node at (0,0) (sa) {$(a)$};
    \node at (3,2) (sb) {$(b)$};
    \draw [-> ] (3.2,1.85) arc(-120:120:0.2);
    \draw [->] (-0.2,0.15) arc(50:300:0.2);
     \draw [->] (B) -- (sb);
     \draw [->] (A) -- (sa);
     \draw [->] (B) -- (sa);
     \draw [->] (sa) -- (B);
     \draw [->] (sb) -- (B);
     \draw [->] (A) -- (sb);
     \draw [->] (sb) -- (A);
     \draw [->] (sa) -- (A);
     \draw [->] (B) -- (A);
     \draw [->] (A) -- (B);
\end{tikzpicture}
\end{center}
 \end{enumerate}
\end{theorem}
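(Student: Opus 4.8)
The plan is to realize every ECO in the statement as a crossing, on one of the sections $\Sigma_a$ or $\Sigma_b$, of a forward iterate of the two--dimensional manifold $W^u(E^+)$ with a backward iterate of $W^s(E^-)$, exactly in the spirit of Lemma~\ref{lema1} and Theorem~\ref{teo1}. First I would record the geometric principle that turns the whole argument into bookkeeping. Each section $\Sigma_{a,b}$ is the half--plane $\{(r,v)\,:\,r\ge 0\}$ whose boundary line $r=0$ is $\mathcal{C}\cap\Sigma_{a,b}$; by Lemma~\ref{lema1} and its iterates under $\mathcal{P}^{\pm1}$, the pieces of $W^u(E^+)$ and of $W^s(E^-)$ meeting a section are arcs with both endpoints on that boundary line. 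Two such arcs, living in the same half--plane and anchored at $r=0$, must intersect whenever their four endpoints interlace along $\mathcal{C}\cap\Sigma_{a,b}$. Moreover the endpoints are never arbitrary: each arc is obtained from $J^{a,b}$ or $K^{a,b}$ by applying $\mathcal{P}^{\pm1}$ and cutting at previously found ECO points, so near each endpoint the orbit shadows one of the $1$D branches $W^{u}_{\pm}(E^-)$, $W^{s}_{\pm}(E^+)$, $\Gamma^{u}_{\pm}$, $\Gamma^{s}_{\pm}$, and hence the endpoint is one of the points $u_j^{\pm}, s_j^{\pm}, p_j^{\pm}, q_j^{\pm}$. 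Consequently the existence of each crossing is decided entirely by the Type~I orderings of these points listed in Section~\ref{sect:1dinv}, together with \eqref{eq:sectUPQS}.

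For part (a) I would, by the $a\leftrightarrow b$ symmetry of the hypotheses (equivalently by the symmetry \eqref{eq:sym} and Proposition~\ref{prop:ECOsym}, noting that $(\underline{b,a},\overset{n)}{\ldots},b,a,b)$ is a palindrome), reduce to producing the type $(\underline{b,a},\overset{n)}{\ldots},b,a,b)$ orbit. The key observation is that this length--$(2n+1)$ itinerary is precisely the list of the first $2n+1$ symbols of the branch $W^u_+(E^-)$, i.e. the portion traversed before that branch escapes along an arm. I would build it by pushing the fundamental arc $J^b$ forward under $\mathcal{P}$, at each step discarding the part lying beyond the ECO point already produced, so that the surviving arc keeps one endpoint riding $W^u_+(E^-)$. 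Because $\mathcal{P}$ advances the intersections of this branch along its own alternating itinerary ($\mathcal{P}(u_j^+)=u_{j+1}^+$, with $u_1^+\in\Sigma_b$, $u_2^+\in\Sigma_a,\dots$), the carried arc visits the sections $\Sigma_b,\Sigma_a,\Sigma_b,\dots$ in exactly the pattern prescribed by $\sigma$. After $2n$ steps the arc lies in $\Sigma_b$, and the Type~I ordering on $\mathcal{C}\cap\Sigma_b$ guarantees that its endpoints interlace with $q_1^-,s_1^-$, so it brackets $K^b$; the resulting crossing is an orbit whose $(2n+1)$--th collision is its last before tending to $E^-$, that is, an ECO of type $(\underline{b,a},\ldots,b,a,b)$.

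For part (b) I would read the displayed graph as a finite automaton on the four blocks $\{(a),(b),A,B\}$, with $A=(\underline{a,b},\ldots,a,b,a)$ and $B=(\underline{b,a},\ldots,b,a,b)$, an edge $X\to Y$ meaning that the terminal symbol of $X$ and the initial symbol of $Y$ are compatible in the precise sense that, after realizing $X$ as a forward iterate of a $J$--arc, the surviving arc still brackets the fundamental arc that launches $Y$. I would then argue by induction on the length of an admissible path: the base transitions are exactly Theorem~\ref{teo1} (the self--loops at $(a)$ and $(b)$) and part (a) (the blocks $A,B$), and the inductive step is a single bracketing/crossing of the same nature, certified again by the Type~I orderings and by the self--reproducing character of the endpoints under $\mathcal{P}$ (one step sends the bracketing configuration of a block to that of any admissible successor). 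Closing the last block against $K^{a,b}$ produces the heteroclinic to $E^-$, hence the ECO realizing the whole concatenated itinerary.

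I expect the main obstacle to be precisely this endpoint bookkeeping across section switches. Every time the shadowed $1$D branch changes arm, one must check that the pushed or pulled arc remains a genuine non--degenerate arc, that it stays in the section prescribed by $\sigma$, that its endpoints are the claimed consecutive $u_j^{\pm},s_j^{\pm},p_j^{\pm},q_j^{\pm}$, and that $\mathcal{P}^{\pm1}$ is defined on its interior --- the map failing exactly at the already--constructed ECO points, which is what permits the cut--and--continue step to close up. These facts are local: near $E^{\pm}$ they follow from hyperbolicity (the shadowing already used in Lemma~\ref{lema1}), and away from the equilibria from Proposition~\ref{prop:crossings}, which forces the return to $\Sigma$. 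The genuinely delicate point is to verify that the Type~I orderings are invariant under one step of $\mathcal{P}^{\pm1}$ in the strong sense that the interlacing required for the next symbol always survives; once this self--consistency is established, parts (a) and (b) follow from the inductions sketched above.
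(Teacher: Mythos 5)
Your proposal follows essentially the same strategy as the paper's proof: iterate arcs of the 2D invariant manifolds under the Poincar\'e map, cut them at previously found ECO points so that the map remains defined and continuous on the interior, and certify each new crossing by the Type~I interlacing of the points $u_j^{\pm}, s_j^{\pm}, p_j^{\pm}, q_j^{\pm}$ along $\mathcal{C}\cap\Sigma_{a,b}$; the only cosmetic difference is that you push $J^b$ forward along $W^u(E^+)$ where the paper pulls $K^b$ backward along $W^s(E^-)$, which is equivalent under the symmetry $\mathcal{S}$. The one place you stop short is part (b): the paper actually carries out the interlacing verification for three representative edges of the graph (the block followed by repeated $(b)$'s, $(a)\to(\underline{b,a},\overset{n)}{\ldots},b,a,b)$, and $(\underline{a,b},\overset{n)}{\ldots},a,b,a)\to(\underline{b,a},\overset{n)}{\ldots},b,a,b)$) and obtains the remaining edges from Proposition~\ref{prop:ECOsym} and the other branches, whereas you correctly isolate this as the delicate step but leave it unchecked.
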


\begin{proof}
Recall that the fact that $W^u_{\pm}(E^-)$ are of type I means that the branches perform $n$ full turns and exhibit the following behaviors respectively (see Table~\ref{tab:casos} and  Figure~\ref{fig:varinvCol}, top left):
$$(\underline{b,a},\overset{n)}{\ldots},b,a,b,b,\ldots), \quad (\underline{a,b},\overset{n)}{\ldots},a,b,a,a,\ldots).$$
Therefore, as shown in Section~\ref{sect:1dinv}, the intersections of the 1D-invariant manifolds with the section $\Sigma$ give the 
sequences $u^{\pm}_j$, $s^{\pm}_k$ with a specific ordering, see also Figure~\ref{fig:Ordering}.

From Lemma~\ref{lema1}, $K^{b}$ and $J^{b}$ are the arcs that correspond to the first intersection, backwards and forwards in time, of $W^s(E^-)$ and $W^u(E^+)$ with $\Sigma_b$, respectively (similarly, $K^a$ and $J^a$ and the section $\Sigma_a$). In Theorem~\ref{teo1}, we have proved that
$J^b\cap K^b \neq \emptyset$  ($J^a\cap K^a \neq \emptyset$).

The arguments are done by iterating the Poincar\'e map $\mathcal P$ backwards in time and following the preimages of the stable manifold $W^s(E^-)$ to look for intersections with $J^{a/b}$.

{\bf Remark}. 
In most of the figures that illustrate the proofs, for simplicity, those arcs that intersect are shown as if they would intersect only once. In general, this is not necessarily the case. For this reason, in the proofs the reader will find points like $E$ and $\widetilde{E}$ that in the figures seem to be the same one, but they are not in general.

\begin{enumerate}[(a)]
\item First, we shall prove the existence of  ECO of the form  $(\underline{b,a},\overset{n)}{\ldots},b,a,b)$. 
The existence of an ECO of type $(\underline{a,b},\overset{n)}{\ldots},a,b,a)$ can be obtained repeating similar arguments using the arcs $J^a$ and $K^a$. 

Consider $\widetilde{E}_1 \in J^b\cap K^b$ such that the arc $\widetilde{K}^b:=\langle \widetilde{E}_1,s_1^- \rangle  \;\subset K^b$ does not intersect $J^b$ except at $\widetilde{E}_1$, see Figure \ref{fig:Thm2a}. Clearly, ${\mathcal P}^{-1}(s_1^-)=s_2^-$ and 
$$ \lim_{\underset{Z\in \widetilde{K}^b}{Z \to \widetilde{E}_1}} {\mathcal P}^{-1}(Z) = s_1^+.$$
Therefore, $\overline{\mathcal{P}^{-1}(\inte(\widetilde{K}^b))}=\langle s_2^-,s_1^+ \rangle $ is a continuous arc contained in $\Sigma_a$. 
Let us suppose first that $\langle s_2^-,s_1^+ \rangle \cap J^a  = \emptyset $, see Figure \ref{fig:Thm2a}, left. Therefore, we can take its preimage:
$$\overline{\mathcal{P}^{-2}(\inte(\widetilde{K}^b))}=
\mathcal{P}^{-1}(\langle s_2^-,s_1^+ \rangle )=\langle \mathcal{P}^{-1}(s_2^-),
\mathcal{P}^{-1}(s_1^+) \rangle =\langle s_3^-,s_2^+ \rangle ,$$
which is an arc in $\Sigma_b$.
Suppose also that we can repeat the argument $2n-1$ times. That is, suppose that
iterating the Poincar\'e map ${\mathcal P}$ backwards,
all the preimages 
$$\overline{\mathcal{P}^{-k}(\inte(\widetilde{K}^b))} \cap W^u(E^+)=\langle s_{k+1}^-,s_{k}^+ \rangle \cap W^u(E^+) 
= \emptyset,$$
for $k=1,\ldots, 2n-1$. 
Then,
$$\overline{\mathcal{P}^{-(2n)}(\inte(\widetilde{K}^b))}=\langle s_{2n+1}^-,s_{2n}^+ \rangle 
 \in \Sigma _b,$$
and it intersects $J^b=\langle u_1^+,p_1^+ \rangle $ because of the known ordering of the sequences on $\mathcal{C} \cap\Sigma_b$: 
$s_{2n+1}^-< u_1^+< s_{2n}^+.$  In consequence
$J^b\cap \mathcal{P}^{-(2n)}(\inte(\widetilde{K}^b)) \neq \emptyset$.  The orbit through any of the intersection points is an ECO of type $(\underline{b,a},\overset{n)}{\ldots},b,a,b)$, see Figure \ref{fig:Thm2a}, left.
 \begin{figure}[!ht]
    \centering
    \includegraphics[width=7.5cm]{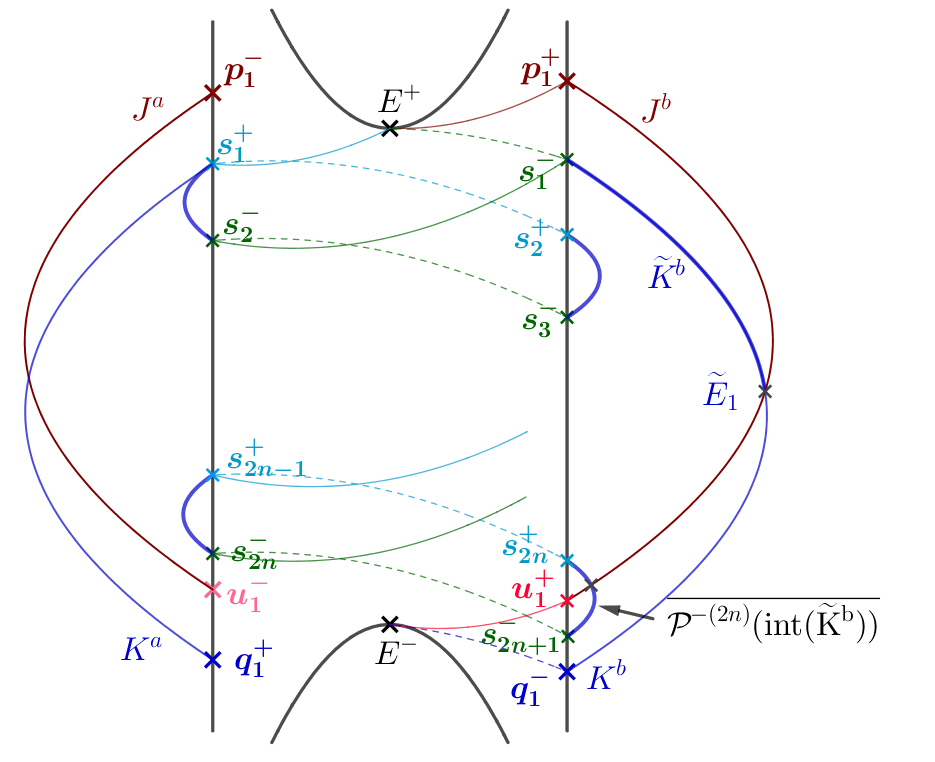}
    \includegraphics[width=7.5cm]{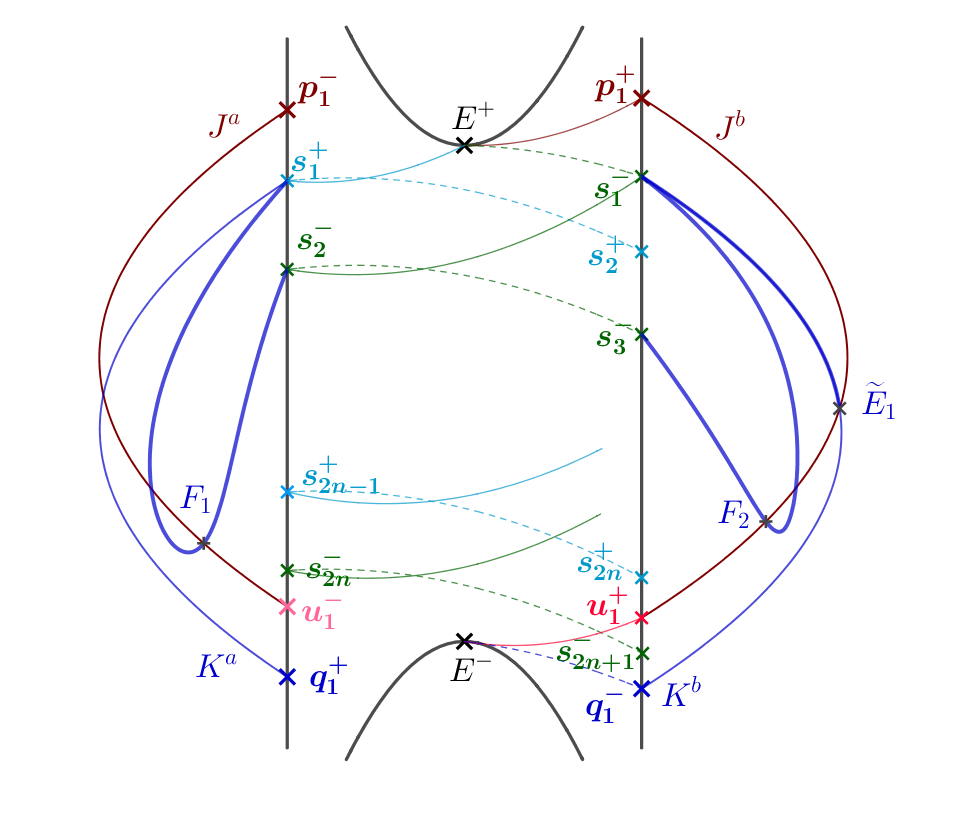}
    \caption{Schematic idea of the proof of the existence of ECO of type $(\underline{b,a},\overset{n)}{\ldots},b,a,b)$.}
    \label{fig:Thm2a}
\end{figure}

Now suppose that one of the preimages  $\overline{\mathcal{P}^{-k}(\inte(\widetilde{K}^b))}$ already intersects $W^u(E^+)$. We can suppose that $k=1$ is the first preimage of $\widetilde{K}^b$ that intersects the unstable manifold
(the argument is similar for any other $k$), that is, 
$$  \langle s_2^-,s_1^+ \rangle \cap J^a \neq \emptyset,$$
see Figure \ref{fig:Thm2a}, right. Then, we can consider $F_1$ a point on that intersection such that 
$\langle s_2^-,F_1 \rangle \subset \langle s_2^-,s_1^+ \rangle$ does not intersect $J^a$ except at $F_1$. 
Then, we
take its preimage
$$  \overline{\mathcal{P}^{-1}(\inte(\langle s_2^-,F_1 \rangle ))}= \langle s_3^-,s_1^- \rangle \subset \Sigma^b.$$
If this arc does not intersect $J^b$, then we consider its preimage $\langle s_4^-,s_2^-\rangle$. If it intersects, then 
there exists $F_2$ such that 
$\langle s_3^-,F_2 \rangle \subset \langle s_3^-,s_1^- \rangle$ does not intersect $J^b$ except at $F_2$, see Figure \ref{fig:Thm2a}, right. And we can consider its preimage, which is $\langle s_4^-,s_1^+\rangle$.
Repeating the argument, at each step we can consider an appropiate subarc $\langle s_{k+1}^-,F_k \rangle$ such that the preimage of its interior is $\langle s_{k+2}^-,s_j^{\pm} \rangle$ for a certain $j\leq k$. 
After $2n$ iterations of ${\mathcal P}^{-1}$, we will end with an arc 
$$\langle s_{2n+1}^-, s_{j}^-\rangle, \quad j \leq 2n$$
that intersects $J^b$. Therefore, we obtain an ECO of type $(\underline{b,a},\overset{n)}{\ldots},b,a,b)$, see Figure \ref{fig:Thm2a}, right. 

Notice that in the above case, other ECO with less number of partial collisions exists, although a priori we cannot ensure their existence.
\item 
We notice that the existence of each one of the types that appear in the vertices of the diagram are already shown in Theorem~\ref{teo1} and the previous item. Furthermore
we will prove in detail the existence of the following diagram:
\begin{center}
\begin{tikzpicture}
    \node at (0,2) (B) {$(\underline{b,a},\overset{n)}{\ldots},b,a,b)$};
    \node at (3,0) (A) {$(\underline{a,b},\overset{n)}{\ldots},a,b,a)$};
    \node at (0,0) (sa) {$(a)$};
    \node at (3,2) (sb) {$(b)$};
    \draw [-> ] (3.2,1.85) arc(-120:120:0.2);
    \draw [->] (-0.2,0.15) arc(50:300:0.2);
     \draw [->] (B) -- (sb);
     \draw [->] (sa) -- (B);
     \draw [->] (A) -- (B);
\end{tikzpicture}
\end{center}
The ECOs corresponding to reverse the arrows can be proved using Proposition~\ref{prop:ECOsym},
and the remaining ones can be obtained by the fact that the 1D-invariant manifolds $W^u(E^-)$ and $W^s(E^+)$ are of type I, repeating the same arguments that we will show but using the negative branches of the manifolds.


First, we prove the connection 
\begin{center}
\begin{tikzpicture}
    \node at (0,0) (B) {$(\underline{b,a},\overset{n)}{\ldots},b,a,b)$};
    \node at (3,0) (sb) {$(b)$};
    \draw [-> ] (3.2,-0.15) arc(-120:120:0.2);
     \draw [->] (B) -- (sb);
\end{tikzpicture}
\end{center}
That is, the existence of ECOs of type
$(\underline{b,a},\overset{n)}{\ldots},b,a,b,\underline{b},\overset{m)}{\ldots},b)$, for any $m\in {\mathbb N}$.

As seen in the proof of Theorem \ref{teo1}, for any $m>0$ an ECO of type $(b,\overset{m+1)}{\ldots},b)$ is obtained from the arc $K_1^b\subset K^b$, by showing the existence of a sequence of arcs $K_j^b\subset \overline{\mathcal{P}^{-1}
(\inte(K_{j-1}^b))}=\langle q_{j}^-,s_1^- \rangle \subset \Sigma^b, j=2,\dots,m+1$. See Figure \ref{fig:Thm1}.

Consider $\overline{\mathcal{P}^{-1}(\inte(K_{m}^b))}=
\langle q_{m+1}^-,s_1^- \rangle $, which intersects $J^b$, and consider a point $\widetilde{E}_{m+1}$ of that intersection such that  $\langle \widetilde{E}_{m+1},s_1^- \rangle$ does not intersect $J^b$ except at $\widetilde{E}_{m+1}$.
We now repeat the process explained in the previous item: 
$\overline{\mathcal{P}^{-1}(\inte(\langle \widetilde{E}_{m+1},s_1^- \rangle ))}=\langle s_2^-,s_{1}^+ \rangle $ in 
$\Sigma _a$, and iterating the Poincaré map backwards, the preimages belong alternatively to $\Sigma_a$ and 
$\Sigma_b$ until
$$\overline{\mathcal{P}^{-(2n)}(\inte(\langle \widetilde{E}_{m+1},s_1^- \rangle ))}=\langle s_{2n+1}^-,s_{2n}^+ \rangle 
 \in \Sigma _b,$$
which intersects $J^b$. See Figure~\ref{fig:Thm2b1i3} left. 
 \begin{figure}[!ht]
    \centering
    \includegraphics[width=7cm]{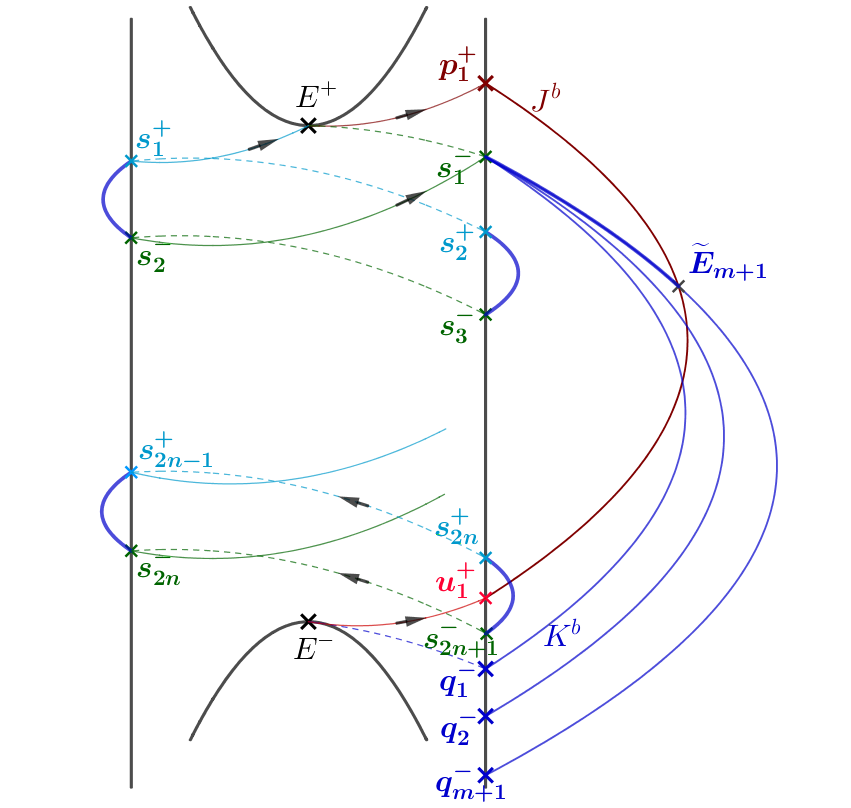}
    \includegraphics[width=7cm]{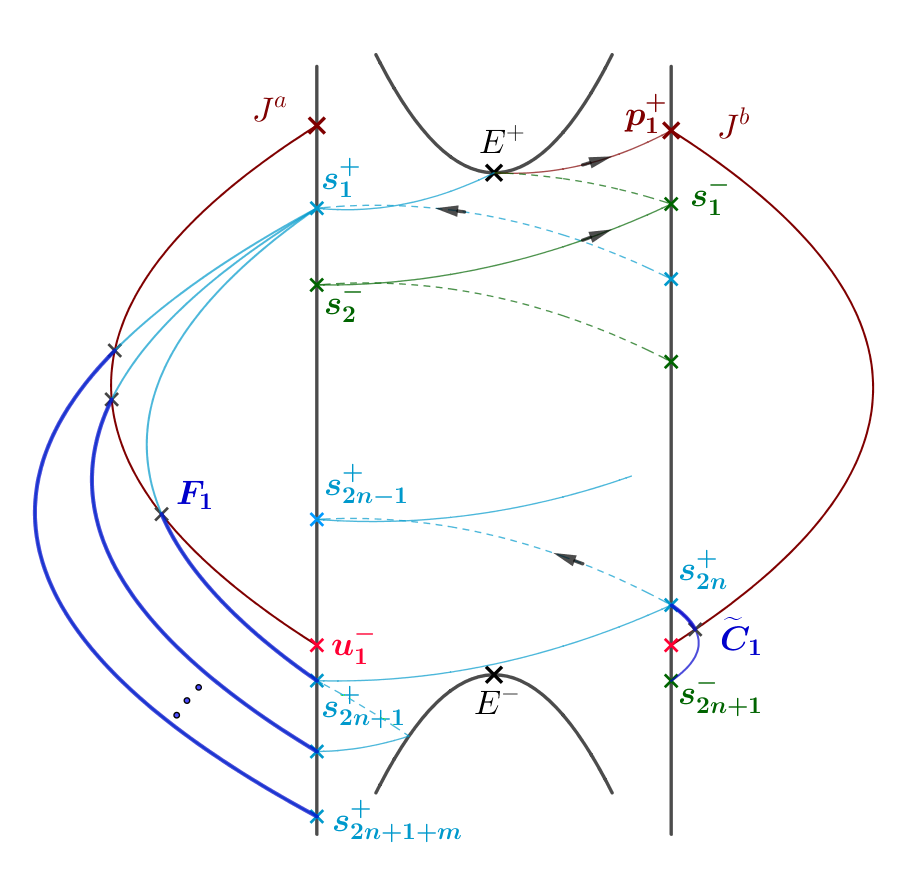}
    \caption{Schematic idea of the proof of the existecence of ECO of type $(\underline{b,a},\overset{n)}{\ldots},b,a,b,\underline{b},\overset{m)}{\ldots},b)$ (left) and  
    $(\underline{a},\overset{m)}{\ldots},a,\underline{b,a},\overset{n)}{\ldots} b,a,b)$  (right). }
    \label{fig:Thm2b1i3}
\end{figure}

Second, we prove the connection 
\begin{center}
\begin{tikzpicture}
    \node at (0,0) (B) {$(\underline{b,a},\overset{n)}{\ldots},b,a,b)$};
    \node at (3,0) (sa) {$(a)$};
    \draw [-> ] (3.2,-0.15) arc(-110:120:0.2);
     \draw [->] (sa) -- (B);
\end{tikzpicture}
\end{center}
That is, the existence of an ECO of type 
$(\underline{a},\overset{m)}{\ldots},a,\underline{b,a},\overset{n)}{\ldots} b,a,b)$, for any $m\in {\mathbb N}$.

In this case we start with the last arc in the proof of item (a),  $\overline{\mathcal{P}^{-(2n)}(\inte(\widetilde{K}^b))}=\langle s_{2n+1}^-,s_{2n}^+ \rangle $ which intersects $J^b$ (recall Figure~\ref{fig:Thm2a}).
Consider point $\widetilde{C}_1$ on that intersection such that the arc
$\langle \widetilde{C}_1,s_{2n}^+ \rangle \subset \langle s_{2n+1}^-,s_{2n}^+ \rangle$ does not have any point in common with $J^b$ except $\widetilde{C}_1$. Therefore, point $\widetilde{C}_1$ belongs to an ECO of type $(\underline{b,a},\overset{n)}{\ldots} b,a,b)$, 
$\overline{\mathcal{P}^{-1}(\inte(\langle \widetilde{C}_1,s_{2n}^+ \rangle ))}=\langle s_{2n+1}^+,s_1^+ \rangle$  belongs to $\Sigma _a$
and intersects the arc $J^a=\langle u_1^-,p_1^- \rangle \subset W^u(E^+)$.
Each of these intersections correspond to an ECO of type $(a,\underline{b,a},\overset{n)}{\ldots} b,a,b)$.
 
Next, let $F_1\in \langle s_{2n+1}^+,s_1^+ \rangle \cap J^a$ such that 
that $\langle s_{2n+1}^+, F_1 \rangle $ does not intersects $J^a$ except at $F_1$.
Its preimage $\overline{\mathcal{P}^{-1}(\inte(\langle s_{2n+1}^+, F_1 \rangle )}=
\langle s_{2n+2}^+,s_1^+  \rangle $ also intersects $J^a$. Any point of that intersection corresponds to an ECO  of type   
$(a,a,\underline{b,a},\overset{n)}{\ldots}, b,a,b)$. By the iteration of this process,
we prove the existence of ECOs of type   
$(\underline{a},\overset{m)}{\ldots},a,\underline{b,a},\overset{n)}{\ldots} b,a,b)$.  See Figure~\ref{fig:Thm2b1i3} right.

Third,  we prove the connection 
\begin{center}
\begin{tikzpicture}
    \node at (0,0) (B) {$(\underline{b,a},\overset{n)}{\ldots},b,a,b)$};
    \node at (5,0) (A) {$(\underline{a,b},\overset{n)}{\ldots},a,b,a)$};
     \draw [->] (A) -- (B);
\end{tikzpicture}
\end{center}
That is, the existence of an ECO of type 
$(\underline{a,b},\overset{n)}{\ldots},a,b,a,\underline{b,a},\overset{n)}{\ldots},a,b)$.

In the previous reasoning, we have seen that the orbit through $\widetilde{C}_1$ is an ECO of type $(\underline{b,a},\overset{n)}{\ldots} b,a,b)$
and $\mathcal{P}^{-1}(\inte(\langle \widetilde{C}_1,s_{2n}^+ \rangle ) \cap J^a \neq \emptyset$. 
Consider now a point on that intersection $\widetilde{F}_1$ such that 
$\langle \widetilde{F}_1,s_1^+ \rangle $ does not intersect $J^a$ except at 
$\widetilde{F}_1$.
Then $\overline{\mathcal{P}^{-1}(\inte(\langle \widetilde{F}_1,s_1^+ \rangle ))}=\langle s_2^+,s_1^- \rangle $ is an
arc in $\Sigma _b$.  We iterate the Poincar\'e map ${\mathcal P}$ backwards:
$$\overline{\mathcal{P}^{-k}(\inte(\langle \widetilde{F}_1,s_1^+ \rangle ))}=
\langle s_{k+1}^+,s_k^- \rangle$$
for $k=2,\ldots,2n-1$,
provided that all the preimages do not intersect the unstable manifold. 
For simplicity, we will suppose this is the case. If $\mathcal{P}^{-k}(\inte(\langle \widetilde{F}_1,s_1^+ \rangle )) \cap W^u(E^+) \neq \emptyset$ for some $k$, then we proceed as in item (a).
The last iterate 
$$\overline{\mathcal{P}^{-2n}(\inte(\langle \widetilde{F}_1,s_1^+ \rangle ))} =  \langle s_{2n+1}^+,s_{2n}^- \rangle \subset \Sigma _a,$$ 
which 
intersects $J^a$. The points on that intersection correspond to  
 ECOs of type
  $(\underline{a,b},\overset{n)}{\ldots},a,b,a,\underline{b,a},\overset{n)}{\ldots},a,b)$,  see Figure~\ref{fig:Thm2b4}.
  
 \begin{figure}[!ht]
    \centering
    \includegraphics[width=7cm]{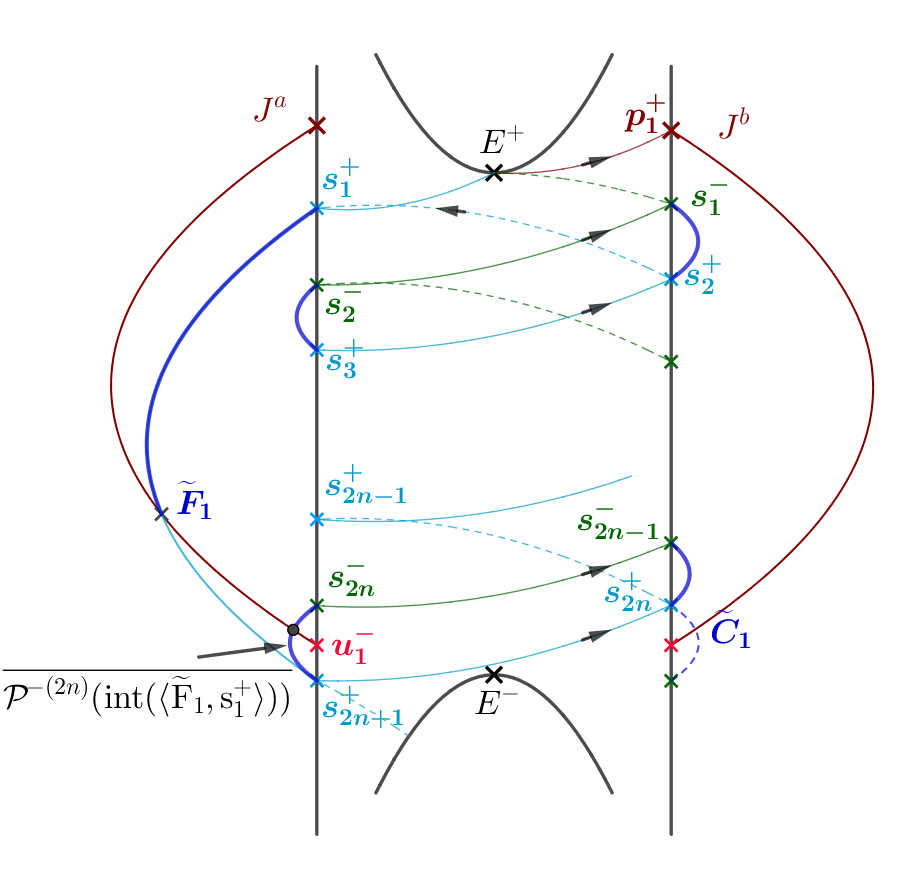}
    \caption{Schematic idea of the proof of the existecence of ECO of type  
    $(\underline{a,b},\overset{n)}{\ldots},a,b,a,\underline{b,a},\overset{n)}{\ldots},a,b)$. It starts with the existence of $\widetilde{C}_1$, see the text for more details and Figure~\ref{fig:Thm2b1i3}.}
    \label{fig:Thm2b4}
\end{figure}

\end{enumerate}

This concludes the proof of Theorem~\ref{teo2}.
\end{proof}

\underline{Remark:} Notice that the proofs in Theorem~\ref{teo2} are based on the dynamical behavior of the 1D-invariant manifolds $W^s(E^+)$ and $W^u(E^-)$ and the fact that their branches are of type I and they escape through different arms of the collision manifold after performing $n$ full turns. 
If the 1D-invariant manifolds are of type II, the behavior is similar with the only difference that they make $n$ and a half full turns. Therefore, using similar arguments, the next result can be demonstrated. 

\begin{theorem}\label{teo3}
Suppose that $W^u_{\pm}(E^-)$ are of type II, 
so they perform $n$ and a half number of full turns ($n\geq 1$),
before escaping through different arms of ${\mathcal C}$. 
Then:
\begin{enumerate}[(a)]
\item There exist ejection-collision orbits exhibiting $2(n+1)$ collisions of types
  \begin{center}
    $(\underline{b,a},\overset{n+1)}{\ldots},b,a)$ \hspace{1cm} and \hspace{1cm}
    $(\underline{a,b},\overset{n+1)}{\ldots},a,b).$
  \end{center}

 \item There exist ejection-collision orbits exhibiting any sequence that can be obtained by the following graph:
\begin{center}
\begin{tikzpicture}
    \node at (0,2) (B) {$(\underline{b,a},\overset{n+1)}{\ldots},b,a)$};
    \node at (3,0) (A) {$(\underline{a,b},\overset{n+1)}{\ldots},a,b)$};
    \node at (0,0) (sa) {$(a)$};
    \node at (3,2) (sb) {$(b)$};
    \draw [-> ] (3.2,1.85) arc(-120:120:0.2);
    \draw [->] (-0.2,0.15) arc(50:300:0.2);
     \draw [->] (B) -- (sb);
     \draw [->] (A) -- (sa);
     \draw [->] (B) -- (sa);
     \draw [->] (sa) -- (B);
     \draw [->] (sb) -- (B);
     \draw [->] (A) -- (sb);
     \draw [->] (sb) -- (A);
     \draw [->] (sa) -- (A);
     \draw [->] (B) -- (A);
     \draw [->] (A) -- (B);
\end{tikzpicture}
\end{center}
 \end{enumerate}
\end{theorem}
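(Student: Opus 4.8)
The plan is to reproduce the proof of Theorem~\ref{teo2} \emph{verbatim in structure}, replacing the type~I ordering of $\{u_j^{\pm}\},\{s_j^{\pm}\}$ along $\mathcal{C}\cap\Sigma_{a,b}$ by the type~II ordering recorded in Section~\ref{sect:1dinv}. The only genuine change is that each branch now executes $n+\tfrac12$ turns instead of $n$, so the two base blocks terminate in the \emph{opposite} section to the one in which they start: $A:=(\underline{a,b},\overset{n+1)}{\ldots},a,b)$ begins at $\Sigma_a$ and ends at $\Sigma_b$, while $B:=(\underline{b,a},\overset{n+1)}{\ldots},b,a)$ begins at $\Sigma_b$ and ends at $\Sigma_a$. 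As before, Lemma~\ref{lema1} supplies the four first-intersection arcs $J^{a},J^{b},K^{a},K^{b}$, and every existence statement is produced by iterating $\mathcal{P}^{-1}$ on a controlled subarc of a stable-manifold arc until it is forced to cross one of the unstable arcs $J^{a},J^{b}$.

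For part (a) I would establish a $B$-type ECO; the $A$-type one then follows by the $a\leftrightarrow b$ interchange (the same argument on the opposite branches and sections). Since $B$ ends with an $a$-collision, I start from $K^{a}=\langle q_1^+,s_1^+\rangle$, which meets $J^{a}$ by Theorem~\ref{teo1}. Choosing $\widetilde{E}_1\in J^a\cap K^a$ so that $\langle \widetilde{E}_1,s_1^+\rangle$ touches $J^a$ only at $\widetilde{E}_1$, the hyperbolicity argument of Lemma~\ref{lema1} gives $\mathcal{P}^{-1}(s_1^+)=s_2^+$ and $\mathcal{P}^{-1}(Z)\to s_1^-$ as $Z\to\widetilde{E}_1$, so that $\overline{\mathcal{P}^{-1}(\inte(\langle \widetilde{E}_1,s_1^+\rangle))}=\langle s_1^-,s_2^+\rangle\subset\Sigma_b$. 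Assuming for the moment that no intermediate preimage meets $W^u(E^+)$, induction on the number of iterations yields $\overline{\mathcal{P}^{-k}(\inte(\langle \widetilde{E}_1,s_1^+\rangle))}=\langle s_k^-,s_{k+1}^+\rangle$, an arc that lies in $\Sigma_b$ exactly when $k$ is odd. Taking $k=2n+1$ produces $\langle s_{2n+1}^-,s_{2n+2}^+\rangle\subset\Sigma_b$, and the type~II ordering $s_{2n+2}^+<u_1^+<s_{2n+1}^-$ forces this arc to cross $J^b=\langle u_1^+,p_1^+\rangle$; any crossing point is a $B$-type ECO exhibiting its $2(n+1)$ collisions. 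If some intermediate preimage does meet $W^u(E^+)$ first, I pass to the appropriate subarc $\langle s_{k+1}^-,F_k\rangle$ exactly as in the second half of the proof of Theorem~\ref{teo2}(a); this only lowers the indices reached, but the process still terminates, after $2n+1$ backward iterations, in an arc of $\Sigma_b$ straddling $u_1^+$.

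For part (b) the four vertices of the graph already exist: the self-loops $(a)$ and $(b)$ come from Theorem~\ref{teo1}, and the blocks $A,B$ from part (a). The edges are obtained by concatenating arcs exactly as in Theorem~\ref{teo2}(b): to realize $B\to(b)$ I prepend $m$ pure $b$-collisions through the $K_j^b$ construction of Theorem~\ref{teo1} and then run the $(2n+1)$-step block argument above; to realize $(a)\to B$ I start from the final crossing arc of part (a), select a subarc crossing $J^b$, and keep iterating $\mathcal{P}^{-1}$ inside a single section to append $a$-collisions; and to realize $A\to B$ I chain the $A$-block argument into the $B$-block argument through their common section $\Sigma_b$. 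Reversing every arrow is immediate from Proposition~\ref{prop:ECOsym}, and the arrows involving the interchanged labels follow from the $a\leftrightarrow b$ symmetry, so every path in the displayed graph is realized.

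I expect the main obstacle to be purely the combinatorial bookkeeping: certifying, at each of the $2n+1$ (or more, in part (b)) backward iterations, both the correct alternation of sections $\Sigma_a/\Sigma_b$ and the precise pair of endpoint indices, so that the type~II ordering can be invoked to guarantee that the final arc straddles an endpoint of $J^{a}$ or $J^{b}$. The secondary difficulty, already present in Theorem~\ref{teo2}, is the branching case in which a preimage arc meets $W^u(E^+)$ prematurely; handling it requires the careful subarc selection that keeps the controlled endpoint on the $s_j^{\mp}$-sequence while not forfeiting the crossing with the target arc.
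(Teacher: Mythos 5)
Your proposal is correct and follows exactly the route the paper takes: the paper gives no separate proof of this theorem, stating only (in the remark preceding it) that it follows from the arguments of Theorem~\ref{teo2} with the type~II behaviour of the branches, which is precisely what you carry out, and your index bookkeeping in part (a) --- terminating after $2n+1$ backward iterations at $\langle s_{2n+1}^-,s_{2n+2}^+\rangle\subset\Sigma_b$, which straddles $u_1^+$ by the recorded type~II ordering $s_{2n+2}^+<u_1^+<s_{2n+1}^-$ --- checks out. The only quibble is notational: in the branching case the endpoint that must be preserved exactly under $\mathcal{P}^{-1}$ is the one on the $\{s_j^+\}$ sequence (since it is $s_{2n+2}^+$, not $s_{2n+1}^-$, that ends up below $u_1^+$), so the retained subarc should be $\langle F_k,s_{k+1}^+\rangle$ rather than $\langle s_{k+1}^-,F_k\rangle$ --- an issue you yourself flag in your closing paragraph.
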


Next we present the results of the existence of ECO in cases III and IV of the 1D-invariant manifolds. 
\begin{theorem}\label{teo4}
Consider the 1D-invariant manifold $W^u_{\pm}(E^-)$:
\begin{enumerate}
\item 
Suppose they are of type III, so the right and left branches perform $n$ and $n$ and a half, respectively, full turns 
before escaping through the right arm of ${\mathcal C}$.
Then, there exist ejection-collision orbits of the following type for any integer $k\geq 1$:
\begin{center}
\begin{tikzpicture}
    \node at (0,0) (B) {$(\underline{b,a},\overset{k(n+1))}{\ldots},b,a,b)$};
    \node at (3,0) (sb) {$(b)$};
    \node at (7,0) (C) {$(\underline{b,a},\overset{k(n+1)-1)}{\ldots},b,a,b)$};
    \draw [-> ] (3.2,0.3) arc(-25:205:0.2);
     \draw [->] (B) -- (sb);
     \draw [->] (sb) -- (B);
     \draw [->] (C) -- (sb);
     \draw [->] (sb) -- (C);
\end{tikzpicture}
\end{center}

\item 
Suppose they are of type IV, so the right and left branches perform $n$ and a half and $n$, respectively, full turns 
before escaping through the left arm of ${\mathcal C}$.
Then, there exist ejection-collision orbits of the following type for any integer $k\geq 1$:
integer $k\geq 1$:
\begin{center}
\begin{tikzpicture}
    \node at (0,0) (B) {$(\underline{a,b},\overset{k(n+1))}{\ldots},a,b,a)$};
    \node at (3,0) (sb) {$(a)$};
    \node at (7,0) (C)  {$(\underline{a,b},\overset{k(n+1)-1)}{\ldots},a,b,a)$};
    \draw [-> ] (3.2,0.3) arc(-25:205:0.2);
     \draw [->] (B) -- (sb);
     \draw [->] (sb) -- (B);
     \draw [->] (C) -- (sb);
     \draw [->] (sb) -- (C);
\end{tikzpicture}
\end{center}
\end{enumerate}
\end{theorem}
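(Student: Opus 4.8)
The plan is to mirror the inductive, arc-tracking argument of Theorems~\ref{teo2} and \ref{teo3}, adapted to the Type~III orderings (and, by the verbatim argument with the roles of $a$ and $b$ interchanged, to Type~IV). Throughout I would regard each $\Sigma_{a,b}$ as the half-plane $\{(r,v):r\geq 0\}$, so that the arcs $J^{a,b}$, $K^{a,b}$ of Lemma~\ref{lema1} and all their $\mathcal{P}^{-1}$-images are arcs whose endpoints lie on the line $\mathcal{C}\cap\Sigma_{a,b}$ ($r=0$); two such arcs are forced to meet precisely when their endpoints interleave in the linear order recorded in Section~\ref{sect:1dinv}. The decisive structural feature of Type~III is that, since \emph{both} branches of $W^u_{\pm}(E^-)$ escape through the right arm, on $\mathcal{C}\cap\Sigma_b$ both escape sequences pile up at the top (the stack written out for Type~III), while on $\mathcal{C}\cap\Sigma_a$ only finitely many $u_j^{\pm},s_j^{\pm}$ occur.

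I would first isolate the two ``blocks'' and then treat their concatenation. For the blocks I start, exactly as in Theorem~\ref{teo2}(a), from a subarc $\widetilde K^b\subset K^b$ with endpoints $s_1^-$ and a point of $J^b\cap K^b$, and iterate $\mathcal{P}^{-1}$. Using $\mathcal{P}^{-1}(s_j^{\pm})=s_{j+1}^{\pm}$, the images $A_i=\langle s_{i+1}^-,s_i^+\rangle$ stay nested inside $J^b$ on the even ($\Sigma_b$) steps and inside $J^a$ on the odd ($\Sigma_a$) steps, until at step $2n$ the ordering $s_{2n+1}^-<u_1^+<s_{2n}^+<p_1^+$ makes $A_{2n}$ interleave with $J^b$. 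The resulting intersection is an ECO of type $(\underline{b,a},\overset{n)}{\ldots},b,a,b)$, that is, the block $C$ for $k=1$.

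The heart of the proof, and the step I expect to be the main obstacle, is to produce the block $B$ with $k=1$, namely $(\underline{b,a},\overset{n+1)}{\ldots},b,a,b)$, and to organize the construction into a repeatable cycle. The difficulty is that blind continuation fails: because both branches escape through $\Sigma_b$, the two endpoints $s_{2n+1}^-$ and $s_{2n}^+$ of the linking arc $A_{2n}$ flow backward into \emph{different} sections ($s_{2n+2}^-\in\Sigma_b$ but $s_{2n+1}^+\in\Sigma_a$), and the arc $\langle s_{2n+3}^-,s_{2n+2}^+\rangle$ obtained by iterating naively lands entirely below $u_1^+$ in the $\Sigma_b$ escape stack, so it does not re-link $J^b$. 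Instead, as with the auxiliary points $\widetilde E$, $F_k$, $\widetilde C$, $\widetilde F$ of Theorem~\ref{teo2}, I would cut $A_{2n}$ at its $J^b$-intersection, keep the subarc avoiding $W^u(E^+)$, and follow its preimages through the finite stack of $\Sigma_a$-crossings listed in the Type~III ordering. The finiteness of $\{u_j^{\pm},s_j^{\pm}\}\cap\Sigma_a$ forces this excursion to terminate and the arc to return to $\Sigma_b$ re-linking $J^b$, now after $n+1$ pairs, giving the block $B$. Since the returning arc reproduces the configuration of $A_{2n}$ with all indices shifted, an induction on $k$ then yields $(\underline{b,a},\overset{k(n+1)-1)}{\ldots},b,a,b)$ and $(\underline{b,a},\overset{k(n+1))}{\ldots},b,a,b)$ for every $k\ge 1$. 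Disposing of the subcases in which an intermediate preimage meets $W^u(E^+)$ prematurely, by passing to an appropriate subarc and continuing, is routine but is where the bookkeeping is heaviest.

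For the arrows of the graph I would splice the block construction with the pure-$b$ iteration of Theorem~\ref{teo1}: inserting preimages that remain in the $\Sigma_b$ escape stack realizes the self-loop at $(b)$ and the edges $B\to(b)$ and $C\to(b)$, while starting the block construction from an arc already carrying a string of $b$-collisions realizes $(b)\to B$ and $(b)\to C$; the reversed arrows follow at once from the ECO symmetry of Proposition~\ref{prop:ECOsym}. The absence of direct $B\leftrightarrow C$ edges and of any $a$-blocks is not an omission but reflects the finiteness of the $\Sigma_a$-crossings, which prevents an orbit from lingering on the $a$-side. Finally, Type~IV is proved by running the entire argument with the roles of $a$ and $b$, and of the two arms of $\mathcal{C}$, interchanged; no genuine symmetry of the system is required, since every step uses only the combinatorial ordering, which is invariant under this relabeling.
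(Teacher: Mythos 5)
Your plan reproduces the paper's own argument: block $C$ for $k=1$ via the Theorem~\ref{teo2} iteration, then cutting the linking arc $\langle s_{2n+1}^-,s_{2n}^+\rangle$ at its intersection with $J^b$ and iterating the subarc $\langle \widetilde{C}_1,s_{2n}^+\rangle$ backwards through $\Sigma_a$ and back to $\Sigma_b$ to gain the extra $(b,a)$ pair, followed by induction on $k$, splicing with the pure-$b$ construction of Theorem~\ref{teo1} for the $(b)$ loops, Proposition~\ref{prop:ECOsym} for the reversed arrows, and the $a\leftrightarrow b$ relabeling for Type~IV. This is essentially the same approach as the paper's proof, differing only in minor points of justification.
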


\begin{proof}
We prove the existence of ECOs in the first case (when the 1D-invariant manifolds are of type III). The case of invariant manifolds of type IV can be obtained straightforward by interchanging $a$ and $b$. 

First, we prove the existence of ECO of the desired type for $k=1$. 
The existence of ECOs of type  $(\underline{b,a},\overset{n)}{\ldots},b,a,b)$ rely on the fact that the branch $W^u_{+}(E^-)$ (and its symmetric one, $W^s_{-}(E^+)$)
escapes through the right arm of ${\mathcal C}$, which is the same scenario than in Theorem~\ref{teo2}. 
Similarly, the proof of the existence of ECOs that can be obtained from the graph
\begin{center}
\begin{tikzpicture}
    \node at (0,0) (B) {$(\underline{b,a},\overset{n)}{\ldots},b,a,b)$};
    \node at (3,0) (sb) {$(b)$};
    \draw [-> ] (3.2,-0.15) arc(-120:120:0.2);
     \draw [->] (B) -- (sb);
     \draw [->] (sb) -- (B);
\end{tikzpicture}
\end{center}
follows the same arguments as in Theorem~\ref{teo2}. 
See Figure~\ref{fig:Thm4a} and the iterations \\
$ \mathcal{P}^{-k}(\inte(\langle \widetilde{E}_1,s_1^- \rangle ))$, $k=1,\ldots,2n$.
 \begin{figure}[!ht]
    \centering
    \includegraphics[width=8cm]{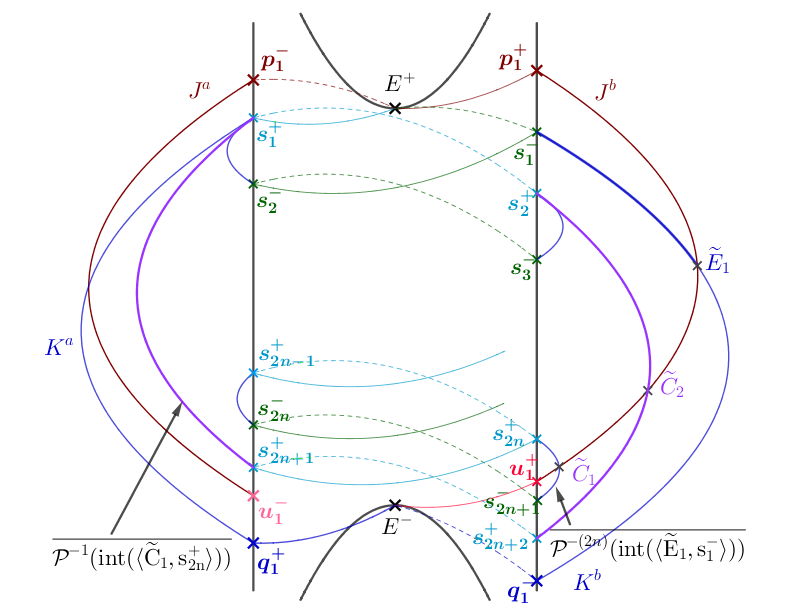}
    \caption{Schematic idea of the proof of the existence of ECO: 1) of type  
    $(\underline{b,a},\overset{n)}{\ldots},b,a,b)$ by the iteration of the Poincar\'e map backwards of the arc $\langle \widetilde{E}_1,s_1^- \rangle$; 2) of type $(b,a,\underline{b,a},\overset{n)}{\ldots},b,a,b)$ following the previous argument and iterating backwards the arc
    $\langle \widetilde{C}_1,s_{2n}^+ \rangle$.}
    \label{fig:Thm4a}
\end{figure}

From the last step, there exists a point $\widetilde{C}_1 \in  \mathcal{P}^{-(2n)}(\inte(\langle \widetilde{E}_1,s_1^- \rangle ))$ that corresponds to an ECO of type $(\underline{b,a},\overset{n)}{\ldots},b,a,b)$, and such that the arc $\langle \widetilde{C}_1, s_{2n}^+ \rangle$ does not intersects $J^b$. By iterating the Poincaré map backwards 
$$
\overline{ \mathcal{P}^{-1}(\inte(\langle \widetilde{C}_1,s_{2n}^+ \rangle ))}=\langle s_{2n+1}^+,s_1^+ \rangle,
\qquad 
\overline{ \mathcal{P}^{-2}(\inte(\langle \widetilde{C}_1,s_{2n}^+ \rangle ))}=\langle s_{2n+2}^+,s_2^+ \rangle.
$$
The last arc intersects $J^b$ (see Figure~\ref{fig:Thm4a}), so there exists an ECO of type $(\underline{b,a},\overset{n+1)}{\ldots},b,a,b)$. 

Now, consider the last arc $\langle s_{2n+2}^+,s_2^+ \rangle$, and $C_2$ and $\widetilde{C}_2$ such that the arcs
$$ \langle  s_{2n+2}^+, C_2 \rangle, \qquad \langle \widetilde{C}_2,s_2^+ \rangle,$$
do not intersect $J^b$ except at $C_2$ and $\widetilde{C}_2$, respectively. 
Iterating the arc $ \langle  s_{2n+2}^+, C_2 \rangle$ using $\mathcal P$ backwards repeatedly, we can obtain an ECO of type
\begin{center}
\begin{tikzpicture}
    \node at (0,0) (B) {$(\underline{b,a},\overset{n+1)}{\ldots},b,a,b)$};
    \node at (3,0) (sb) {$(b)$};
    \draw [-> ] (3.2,-0.15) arc(-120:120:0.2);
     \draw [->] (sb) -- (B);
\end{tikzpicture}
\end{center}
Using Proposition~\ref{prop:ECOsym}, we also obtain the reverse sequence. This concludes the proof for $k=1$.

To prove the case $k=2$, we apply the above arguments to the arc $\langle \widetilde{C}_2,s_2^+ \rangle$. First, 
$$
\overline{ \mathcal{P}^{-1}(\inte(\langle \widetilde{C}_2,s_{2}^+ \rangle ))}=\langle s_{3}^+,s_1^+ \rangle,
\quad \ldots \quad
\overline{ \mathcal{P}^{-(2n)}(\inte(\langle \widetilde{C}_2,s_{2}^+ \rangle ))}=\langle s_{2n+2}^+,s_{2n}^+ \rangle.
$$
The later intersects $J^b$, which corresponds to an ECO of type $(\underline{b,a},\overset{2n+1)}{\ldots},b,a,b)$. 
Next, from the last arc, we can consider again two subarcs:
one of them is iterated backwards through the Poincar\'e map to add as many collisions of type $b$ as desired; the other one
is iterated backwards twice to obtain an ECO of type
$(b,a,\underline{b,a},\overset{2n+1)}{\ldots},b,a,b)=(b,a,\overset{2n+2)}{\ldots},b,a,b)$.
From this ECO, we can consider two new arcs: one of them allows to prove that we can add a sequence of  collisions of type $b$, to finish the proof for $k=2$, the other one is the first step to construct the ECOs of the case $k=3$. 

By iterating the process, the proof is completed. 
\end{proof}

\subsection{Degenerate cases}

Next we consider two of the degenerate cases, the non-symmetric ones (see Section~\ref{sect:1dinv}):
\begin{itemize}
	\item Type $D_1$: there is a heteroclinic connection given by $W^u_{+}(E^-) = W^s_{-}(E^+)$, while the other branches escape along the right arm of the collision manifold. Let $n$ be the number of full turns and a half performed by the coincident branches.
	
	\item Type $D_2$: there is a heteroclinic connection given by $W^u_{-}(E^-) = W^s_{+}(E^+)$, while the other branches escape along the left arm of the collision manifold. Let $n$ be the number of full turns and a half performed by the coincident branches.

\end{itemize}

In the symmetric degenerate case, the only ECOs that can be proved to exists are the ones listed in Theorem~\ref{teo1}. Next results state the ECO that exist for sure in the non-symmetric cases. 

\begin{theorem}
\label{teo5}
Consider the 1D-invariant manifold $W^u_{\pm}(E^-)$, $W^s_{\pm}(E^+)$ of a degenerate type. 
\begin{enumerate}
\item 
Suppose they are non-symmetric of type $D_1$, and $n$ and a half be the full turns of the heteroclinic connection.
Then, there exist ejection-collision orbits of the following type for any integer $k\geq 1$:
\begin{center}
\begin{tikzpicture}
    \node at (0,0) (B) {$(\underline{a,b},\overset{k(n+1))}{\ldots},a,b)$};
    \node at (3,0) (sb) {$(b)$};
    \node at (-3,0) (sa) {$(a)$};
    \draw [-> ] (3.2,0.3) arc(-25:205:0.2);
    \draw [-> ] (-2.8,0.3) arc(-25:205:0.2);
     \draw [->] (B) -- (sb);
     \draw [->] (sb) -- (B);
     \draw [->] (B) -- (sa);
     \draw [->] (sa) -- (B);
\end{tikzpicture}
\end{center}

\item 
Suppose they are non-symmetric of type $D_2$, and $n$ and a half be the full turns of the heteroclinic connection.
Then, there exist ejection-collision orbits of the following type for any integer $k\geq 1$:
\begin{center}
\begin{tikzpicture}
    \node at (0,0) (B) {$(\underline{b,a},\overset{k(n+1))}{\ldots},b,a)$};
    \node at (3,0) (sb) {$(b)$};
    \node at (-3,0) (sa) {$(a)$};
    \draw [-> ] (3.2,0.3) arc(-25:205:0.2);
    \draw [-> ] (-2.8,0.3) arc(-25:205:0.2);
     \draw [->] (B) -- (sb);
     \draw [->] (sb) -- (B);
     \draw [->] (B) -- (sa);
     \draw [->] (sa) -- (B);
\end{tikzpicture}
\end{center}
\end{enumerate}
\end{theorem}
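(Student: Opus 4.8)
The plan is to reproduce, in the degenerate setting, the arc–tracking scheme already used in Theorems~\ref{teo1}--\ref{teo4}: begin with the first–intersection arcs supplied by Lemma~\ref{lema1}, push the stable manifold $W^s(E^-)$ backwards by iterating $\mathcal{P}^{-1}$, and detect the crossings of the resulting arcs with $J^a=\langle u_1^-,p_1^-\rangle$ and $J^b=\langle u_1^+,p_1^+\rangle$. I would prove only case~(1), type $D_1$; case~(2), type $D_2$, then follows \emph{verbatim} after interchanging the roles of $\Sigma_a$ and $\Sigma_b$, equivalently by applying the symmetry $\mathcal{S}$ of \eqref{eq:sym}. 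Within each graph, the reverse arrows are immediately supplied by Proposition~\ref{prop:ECOsym}, so it suffices to realize one orientation of every edge and the self–loops.

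The first substantive step is to record the ordering along $\mathcal{C}\cap\Sigma_{a,b}$ that the degeneracy forces. Since in $D_1$ the branch $W^u_+(E^-)=W^s_-(E^+)$ is a heteroclinic connection tending to $E^+$, the two sequences $\{u_j^+\}$ and $\{s_j^-\}$ are now \emph{finite}: they are the intersections with $\Sigma$ of this single orbit, which realizes $(\underline{b,a},\overset{n)}{\ldots},b,a,b)$ along its $n$ and a half turns, i.e.\ $n+1$ points on $\Sigma_b$ and $n$ points on $\Sigma_a$, with the extreme intersection nearest $E^-$ being precisely $u_1^+$ and the one nearest $E^+$ being $s_1^-$. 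By contrast the surviving branches $W^u_-(E^-)$ and $W^s_+(E^+)$ escape through the right arm, so $\{u_j^-\}$ and $\{s_j^+\}$ remain infinite and interlace with the boundary sequences $\{p_j^\pm\},\{q_j^\pm\}$ of $\Gamma^{u}_\pm,\Gamma^{s}_\pm$ subject to \eqref{eq:sectUPQS}. I would fix this interlacing explicitly, since it is what drives the counting below.

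The heart of the argument is the $k$–fold block. Starting from the sub-arc of $K^b=\langle q_1^-,s_1^-\rangle$ anchored at $s_1^-$ and iterating $\mathcal{P}^{-1}$, the $\Gamma^s_-$–endpoint travels along $q_1^-,q_2^-,\dots$, while the opposite endpoint descends the \emph{finite} heteroclinic list $s_1^-,s_2^-,\dots$ via close passages to $E^+$, exactly as in the proof of Theorem~\ref{teo1}. Each application of $\mathcal{P}^{-1}$ carries the arc alternately between $\Sigma_a$ and $\Sigma_b$, prepending one collision; the decisive point is that, because $\{s_j^-\}$ terminates at $u_1^+$ after traversing the whole connection, the moving arc does not reach $J^a$ until a complete passage of the connection has been consumed. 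This closes the code into one block $(\underline{a,b},\overset{n+1)}{\ldots},a,b)$ rather than spiralling indefinitely as in the non-degenerate types. Selecting at each stage a sub-arc that has not yet crossed $J^a$ and repeating the passage $k$ times produces $(\underline{a,b},\overset{k(n+1))}{\ldots},a,b)$ for every $k\geq1$. The two edges to the vertices $(a)$ and $(b)$, together with their self-loops, are then obtained exactly as in Theorem~\ref{teo1}: iterating $\mathcal{P}^{-1}$ on a sub-arc of $K^a$ (resp.\ $K^b$) anchored at $q_1^+$ (resp.\ $q_1^-$) appends arbitrarily many collisions of a single type, and grafting such an iteration before or after a block realizes every admissible path of the graph.

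The step I expect to be the main obstacle is precisely the union of the ordering statement of the second paragraph with the exact-count claim of the third: one must verify that in the degenerate configuration the backward images of the arc avoid $J^a$ for the full traversal of the finite heteroclinic list and cross it only upon completion, so that the itinerary closes into complete blocks of $n+1$ pairs. This is where the finiteness of $\{u_j^+\}=\{s_j^-\}$, the coincidence $s^-_{2n+1}=u_1^+$, and the location of the escaping endpoint $u_1^-$ relative to the heteroclinic sequence must all be used carefully; everything else is a routine transcription of the machinery of Theorem~\ref{teo4}.
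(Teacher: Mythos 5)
Your proposal follows essentially the same route as the paper, whose entire proof of this theorem is the remark that it ``follows the arguments shown in Theorems~\ref{teo1} and \ref{teo2}'' together with a figure for the case $D_1$, $n=2$: your arc-tracking via backward iterates of $\mathcal{P}$, the use of Lemma~\ref{lema1} and the orderings along $\mathcal{C}\cap\Sigma_{a,b}$, and the identification of the finiteness of $\{u_j^+\}=\{s_j^-\}$ (with $s_{2n+1}^-=u_1^+$) as the mechanism that closes the itinerary into blocks of $n+1$ pairs is exactly that argument, worked out in more detail than the paper itself supplies. One small correction: case (2) is obtained by repeating the argument with the roles of $a$ and $b$ (equivalently the two arms of $\mathcal{C}$) interchanged, as the paper does for types III/IV in Theorem~\ref{teo4}, and \emph{not} by applying the symmetry $\mathcal{S}$ of \eqref{eq:sym}, which maps $W^u_+(E^-)$ to $W^s_-(E^+)$ and hence preserves each of the configurations $D_1$ and $D_2$ rather than exchanging them.
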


The proof follows the arguments shown in Theorems~\ref{teo1} and \ref{teo2}. We illustrate the case of type $D_1$ for $n=2$ in Figure~\ref{fig:degenerate}.
 \begin{figure}[!ht]
    \centering
    \includegraphics[width=9cm]{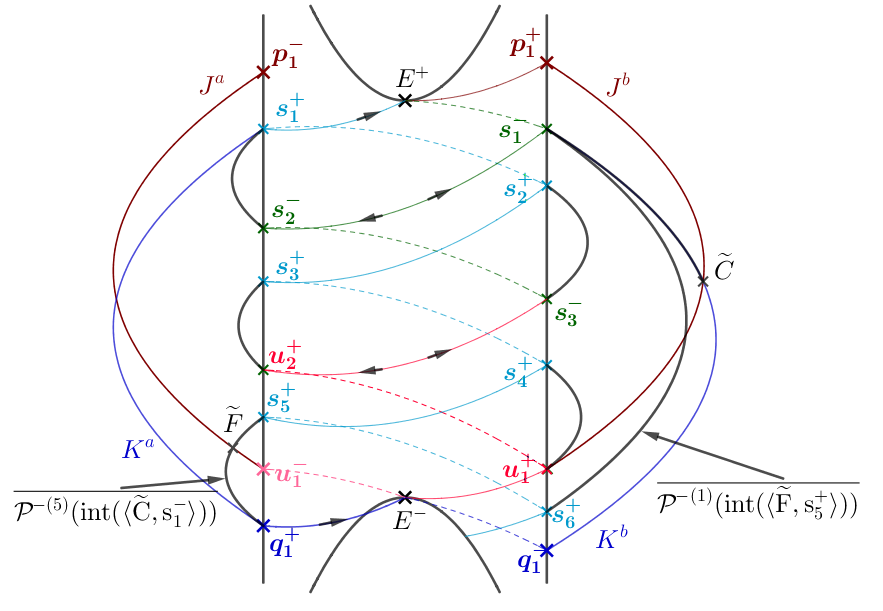}
    \caption{Schematic idea of the proof of the existence of ECO in the case $D_1$.}
    \label{fig:degenerate}
\end{figure}

\section*{Acknowledgements}
M. Alvarez-Ram\'{\i}rez   is partially supported by  Programa Especial de Apoyo a la
Investigaci\'on  de UAM (Mexico) grant number I5-2019. E. Barrab\'es has been supported by grants MTM2016-80117-P,  
PGC2018-100928-B-100 (MINE\-CO/FEDER, UE) and Catalan (AGAUR) grant 2017 SGR 1374. M. Oll\'e has been supported by 
 grant PGC2018-100928-B-100 (MINECO/FEDER, UE) and the Catalan (AGAUR) grant 2017 SGR1 049. 

\doublespacing
\bibliographystyle{abbrvnat}
\bibliography{SC4BP}

\begin{thebibliography}{20}
\providecommand{\natexlab}[1]{#1}
\providecommand{\url}[1]{\texttt{#1}}
\expandafter\ifx\csname urlstyle\endcsname\relax
  \providecommand{\doi}[1]{doi: #1}\else
  \providecommand{\doi}{doi: \begingroup \urlstyle{rm}\Url}\fi

\bibitem[\'Alvarez-Ram\'{\i}rez and Medina(2014)]{am2014}
M.~\'Alvarez-Ram\'{\i}rez and M.~Medina.
\newblock A model for binary-binary close encounters and collisions from a
  dynamical point of view.
\newblock \emph{Astrophys. Space Sci.}, 349:\penalty0 143--150, 2014.
\newblock \doi{10.1007/s10509-013-1657-1}.

\bibitem[Alvarez-Ram\'{\i}rez and Medina(2020)]{am2019}
M.~Alvarez-Ram\'{\i}rez and M.~Medina.
\newblock Some qualitative features of the isosceles trapezoidal four-body
  problem.
\newblock \emph{Qual. Theory Dyn. Syst.}, 19\penalty0 (1):\penalty0 Paper No.
  10, 15, 2020.
\newblock \doi{10.1007/s12346-020-00342-z}.

\bibitem[\'Alvarez-Ram\'{\i}rez et~al.(2015)\'Alvarez-Ram\'{\i}rez, Medina, and
  Vidal]{amv}
M.~\'Alvarez-Ram\'{\i}rez, M.~Medina, and C.~Vidal.
\newblock The trapezoidal collinear four-body problem.
\newblock \emph{Astrophys. Space Sci.}, 358:\penalty0 1--17, 2015.
\newblock \doi{10.1007/s10509-015-2416-2}.

\bibitem[Alvarez-Ram\'{\i}rez et~al.(2019)Alvarez-Ram\'{\i}rez, Barrab\'{e}s,
  Medina, and Oll\'{e}]{2019MMME}
M.~Alvarez-Ram\'{\i}rez, E.~Barrab\'{e}s, M.~Medina, and M.~Oll\'{e}.
\newblock Ejection-collision orbits in the symmetric collinear four-body
  problem.
\newblock \emph{Commun. Nonlinear Sci. Numer. Simul.}, 71:\penalty0 82--100,
  2019.
\newblock \doi{10.1016/j.cnsns.2018.10.026}.

\bibitem[Delgado~Fern\'{a}ndez and P\'{e}rez-Chavela(1991)]{1991DelgadoPerezC}
J.~Delgado~Fern\'{a}ndez and E.~P\'{e}rez-Chavela.
\newblock The rhomboidal four body problem. {G}lobal flow on the total
  collision manifold.
\newblock In \emph{The geometry of {H}amiltonian systems ({B}erkeley, {CA},
  1989)}, volume~22 of \emph{Math. Sci. Res. Inst. Publ.}, pages 97--110.
  Springer, New York, 1991.
\newblock \doi{10.1007/978-1-4613-9725-0_8}.

\bibitem[Devaney(1980)]{Devaney1980}
R.~L. Devaney.
\newblock Triple collision in the planar isosceles three-body problem.
\newblock \emph{Invent. Math.}, 60\penalty0 (3):\penalty0 249--267, 1980.
\newblock \doi{10.1007/BF01390017}.

\bibitem[Kaplan(1999)]{1999Kaplan}
S.~R. Kaplan.
\newblock Symbolic dynamics of the collinear three-body problem.
\newblock In \emph{Geometry and topology in dynamics ({W}inston-{S}alem, {NC},
  1998, {S}an {A}ntonio, {TX}, 1999)}, volume 246 of \emph{Contemp. Math.},
  pages 143--162. Amer. Math. Soc., Providence, RI, 1999.
\newblock \doi{10.1090/conm/246/03781}.

\bibitem[Lacomba and Sim\'o(1982)]{1982LacombaSimo}
E.~Lacomba and C.~Sim\'o.
\newblock Boundary manifolds for energy surfaces in celestial mechanics.
\newblock \emph{Celestial Mechanics}, 28:\penalty0 37--48, 1982.
\newblock \doi{10.1007/BF01230658}.

\bibitem[Lacomba(1983)]{Lacomba1983}
E.~A. Lacomba.
\newblock Mouvements voisins de collision quadruple dans le probl\`eme
  trapezoidal des 4 corps.
\newblock \emph{Celest. Mech. Dyn. Astron.}, 31:\penalty0 23--41, 1983.
\newblock \doi{10.1007/BF01272558}.

\bibitem[Lacomba and Medina(2004)]{2004LacombaMedina}
E.~A. Lacomba and M.~Medina.
\newblock Symbolic dynamics in the symmetric collinear four-body problem.
\newblock \emph{Qual. Theory Dyn. Syst.}, 5\penalty0 (1):\penalty0 75--100,
  2004.
\newblock ISSN 1575-5460.
\newblock \doi{10.1007/BF02968131}.

\bibitem[Lacomba and Medina(2008)]{2008LacombaMedina}
E.~A. Lacomba and M.~Medina.
\newblock Oscillatory motions in the rectangular four body problem.
\newblock \emph{Discrete Contin. Dyn. Syst. Ser. S}, 1\penalty0 (4):\penalty0
  557--587, 2008.
\newblock \doi{10.3934/dcdss.2008.1.557}.

\bibitem[Lacomba and P\'{e}rez-Chavela(1993)]{LacombaPerezCh1993}
E.~A. Lacomba and E.~P\'{e}rez-Chavela.
\newblock Motions close to escapes in the rhomboidal four-body problem.
\newblock \emph{Celestial Mech. Dynam. Astronom.}, 57\penalty0 (3):\penalty0
  411--437, 1993.
\newblock \doi{10.1007/BF00695713}.

\bibitem[Mart\'{\i}nez(2012)]{2012Martinez}
R.~Mart\'{\i}nez.
\newblock On the existence of doubly symmetric ``{S}chubart-like'' periodic
  orbits.
\newblock \emph{Discrete Contin. Dyn. Syst. Ser. B}, 17\penalty0 (3):\penalty0
  943--975, 2012.
\newblock \doi{10.3934/dcdsb.2012.17.943}.

\bibitem[Mart\'{\i}nez(2013)]{2013Martinez}
R.~Mart\'{\i}nez.
\newblock Families of double symmetric ``schubart-like'' periodic orbits.
\newblock \emph{Celest. Mech. Dyn. Astron.}, 117:\penalty0 217--243, 11 2013.
\newblock \doi{10.1007/s10569-013-9509-4}.

\bibitem[McGehee(1974)]{mcgehee1974}
R.~McGehee.
\newblock Triple collision in the collinear three-body problem.
\newblock \emph{Invent. Math.}, 27:\penalty0 191--227, 1974.
\newblock ISSN 0020-9910.
\newblock \doi{10.1007/BF01390175}.

\bibitem[Saari(2005)]{BookSaari05}
D.~G. Saari.
\newblock \emph{Collisions, rings, and other {N}ewtonian {$N$}-body problems},
  volume 104 of \emph{CBMS Regional Conference Series in Mathematics}.
\newblock Published for the Conference Board of the Mathematical Sciences,
  Washington, DC; by the American Mathematical Society, Providence, RI, 2005.
\newblock \doi{10.1090/cbms/104}.

\bibitem[Sekiguchi and Tanikawa(2004)]{ST}
M.~Sekiguchi and K.~Tanikawa.
\newblock On the symmetric collinear four-body problem.
\newblock \emph{Publ. Astron. Soc. Jpn.}, 56:\penalty0 235--251, 2004.

\bibitem[Sim\'{o} and Lacomba(1982)]{SimoLacomba1982}
C.~Sim\'{o} and E.~Lacomba.
\newblock Analysis of some degenerate quadruple collisions.
\newblock \emph{Celestial Mech.}, 28\penalty0 (1-2):\penalty0 49--62, 1982.
\newblock \doi{10.1007/BF01230659}.

\bibitem[Sim\'{o} and Llibre(1981)]{SLL_ECO}
C.~Sim\'{o} and J.~Llibre.
\newblock Characterization of transversal homothetic solutions in the
  {$n$}-body problem.
\newblock \emph{Arch. Rational Mech. Anal.}, 77\penalty0 (2):\penalty0
  189--198, 1981.
\newblock \doi{10.1007/BF00250623}.

\bibitem[Tanikawa and Mikkola(2000)]{2000TanikMik}
K.~Tanikawa and S.~Mikkola.
\newblock One-dimensional three-body problem via symbolic dynamics.
\newblock \emph{Chaos}, 10\penalty0 (3):\penalty0 649--657, 2000.
\newblock ISSN 1054-1500.
\newblock \doi{10.1063/1.1287064}.

\end{thebibliography}

\end{document}